\documentclass[12pt]{amsart}
\usepackage{amsmath,amssymb,amsthm}
\usepackage{verbatim}
\usepackage{color,xcolor}
\usepackage{enumerate}
\usepackage{mathrsfs}
\usepackage{mathtools}

\usepackage{bm}

\usepackage[colorlinks=true, bookmarks=true,pdfstartview=FitV, linkcolor=blue, citecolor=blue, urlcolor=blue]{hyperref}

\textwidth=1.2\textwidth
\textheight=1.1\textheight
\calclayout

\allowdisplaybreaks
\numberwithin{equation}{section}

\newtheorem{theorem}{Theorem}[section]
\newtheorem{lemma}[theorem]{Lemma}

\newtheorem{proposition}[theorem]{Proposition}

\theoremstyle{definition}

\newtheorem{remark}[theorem]{Remark}

\theoremstyle{remark}

\newcommand{\R}{\mathbb R}

\newcommand{\F}{{\mathcal F}}
\newcommand{\Ss}{\mathcal S}
\newcommand{\Oo}{\mathcal O}

\def\Xint#1{\mathchoice 
{\XXint\displaystyle\textstyle{#1}}%
{\XXint\textstyle\scriptstyle{#1}}%
{\XXint\scriptstyle\scriptscriptstyle{#1}}%
{\XXint\scriptscriptstyle\scriptscriptstyle{#1}}%
\!\int} 
\def\XXint#1#2#3{{\setbox0=\hbox{$#1{#2#3}{\int}$} 
\vcenter{\hbox{$#2#3$}}\kern-.5\wd0}}

\def\avgint{\Xint-}

\DeclareMathOperator{\supp}{\mathrm{supp}}
\DeclareMathOperator*{\esssup}{ess\,sup}
\DeclareMathOperator*{\essinf}{ess\,inf}


\newcommand{\pp}{{p(\cdot)}}

\newcommand{\Lp}{L^{p(\cdot)}}

\newcommand{\Pp}{\mathcal P}
\newcommand{\qq}{{q(\cdot)}}
\newcommand{\rr}{{r(\cdot)}}

\newcommand{\M}{\mathcal{M}}


\newcommand{\Rdf}{\mathcal{R}}

\begin{document}

\title[Norm inequalities on Hardy spaces]
{A new approach to norm inequalities on weighted and variable Hardy spaces}

\author[Cruz-Uribe]{David Cruz-Uribe, OFS}
\address{Department of Mathematics, University of Alabama, Tuscaloosa, AL 35487}
\email{dcruzuribe@ua.edu}

\author[Moen]{Kabe Moen}
\address{Department of Mathematics, University of Alabama, Tuscaloosa, AL 35487}
\email{kabe.moen@ua.edu}

\author[Nguyen]{Hanh Van Nguyen}
\address{Department of Mathematics, University of Alabama, Tuscaloosa, AL 35487}
\email{hvnguyen@ua.edu}

\subjclass[2010]{42B20, 42B25, 42B30, 42B35}

\keywords{variable Hardy spaces,}

\thanks{The first author is supported by research funds from the
  Dean of the College of Arts \& Sciences, the University of Alabama, and the second author is supported by the Simons Foundation.}

\date{February 9, 2019}

\begin{abstract}
We give new proofs of Hardy space estimates for fractional and
singular integral operators on weighted and variable exponent Hardy
spaces.    Our proofs consist of several interlocking ideas:  finite
atomic decompositions in terms of $L^\infty$ atoms, vector-valued
inequalities for maximal and other operators, and Rubio de
Francia extrapolation.  Many of these estimates are not new, but we
give new and substantially simpler proofs, which in turn significantly
simplifies the proofs of the Hardy spaces inequalities.
\end{abstract}

\maketitle

\section{Introduction}
\label{section:intro}

In this paper we give new 
proofs of norm inequalities for Calder\'on-Zygmund singular integrals
and fractional integral operators on the weighted Hardy spaces,
$H^p(w)$, and the variable Hardy spaces, $H^\pp$.  The theory of
weighted Hardy spaces is classical:  see the monograph by Str\"omberg
and Torchinsky~\cite{MR1011673} and the earlier paper by Garc\'\i
a-Cuerva~\cite{MR549091}.  Variable Hardy spaces are Hardy spaces
defined in the scale of the variable Lebesgue spaces $L^\pp$, a
generalization of the $L^p$ spaces that has been an active area of research
for the past two decades:  see the books~\cite{cruz-fiorenza-book,
  diening-harjulehto-hasto-ruzicka2010}.  The variable Hardy spaces
were introduced more recently:  see~\cite{DCU-dw-P2014, MR2899976}.
(Complete definitions of these spaces will be given in
Section~\ref{section:prelim} below.)

We give Hardy space estimates for three types of
operators:  singular integrals of convolution type, fractional
integral operators (which are also convolution operators), and
singular integrals of non-convolution type.  
A Calder\'on-Zygmund singular integral of convolution type
is an operator $T$ such that for all $f\in C_c^\infty$, 
\[ Tf(x) =  \text{p.v.}\int_{\R^n} K(x-y) f(y)\,dy , \]
where the kernel $K$ is defined on $\R^n\setminus \{0\}$ and has
regularity of order $N+1$:
\[  |\partial_x^{\alpha} K(x)|
\leq 
\frac{A_\alpha}{|x|^{n+|\alpha|}} \]
for all $\alpha$ such that
$|\alpha|\leq N+1$, where $N$ is a sufficiently
large integer.    For $0<p\leq 1$, if $N> \lfloor
n\big(\frac{1}{p}-1\big)\rfloor$, then $T : H^p \rightarrow H^p$.
(See Stein~\cite{stein93} or Garc\'ia-Cuerva and Rubio de
Francia~\cite{garcia-cuerva-rubiodefrancia85}.)    

Our first two theorems extend this result to weighted and variable
exponent Hardy spaces; again, for brevity we defer some
technical definitions to Section~\ref{section:prelim}.  For a weight
$w$ let $r_w= \inf\{ p : w\in A_p\}$.

\begin{theorem} \label{thm:sio-weight} Given a weight $w\in A_\infty$
  and $0<p<\infty$, suppose that $T$ is a Calder\'on-Zygmund singular
  integral operator of convolution type with regularity of order
  $N+1$, where
\[ N> \bigg\lfloor
n\bigg(\frac{r_w}{p}-1\bigg)\bigg\rfloor. \]
Then 
$T : H^p(w) \rightarrow H^p(w).$
\end{theorem}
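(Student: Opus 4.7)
The plan is to combine three ingredients: (i) the finite $L^\infty$-atomic decomposition of $H^p(w)$, (ii) pointwise bounds for $T$ applied to an atom, and (iii) weighted $L^q$ bounds obtained via Rubio de Francia extrapolation. Using the grand maximal function characterization $\|f\|_{H^p(w)} \approx \|\M f\|_{L^p(w)}$ together with the finite $(p,\infty,N)$-atomic decomposition, it suffices to prove a \emph{uniform} bound $\|\M(Ta)\|_{L^p(w)} \lesssim 1$ for every $(p,\infty,N)$-atom $a$, i.e., $a$ supported in a cube $Q$ with $\|a\|_\infty \le w(Q)^{-1/p}$ and $\int y^\alpha a(y)\,dy = 0$ for $|\alpha| \le N$. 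Since the decomposition $f = \sum_{j=1}^{m} \lambda_j a_j$ is finite, no convergence issue in $\mathcal{S}'$ arises, and $p$-quasi-subadditivity yields $\|\M(Tf)\|_{L^p(w)}^p \lesssim \sum_j |\lambda_j|^p \lesssim \|f\|_{H^p(w)}^p$ on a dense subspace; the full result then follows by density.

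For the uniform atomic estimate I split $Ta = (Ta)\chi_{2Q} + (Ta)\chi_{(2Q)^c}$. The non-local piece exploits the vanishing moments of $a$: Taylor expanding $K(x-\cdot)$ around the center $c_Q$ through order $N$ and invoking the kernel regularity of order $N+1$ give $|Ta(x)| \lesssim (r_Q/|x-c_Q|)^{n+N+1}\|a\|_\infty$ for $x \notin 2Q$, with $r_Q = |Q|^{1/n}$, and the same decay (up to a fixed dilate) for $\M(Ta)$ on $(4Q)^c$. A dyadic annular decomposition combined with the $A_\infty$ growth bound $w(2^k Q) \lesssim 2^{k\sigma n}w(Q)$, valid for every $\sigma > r_w$, gives
$$\int_{(4Q)^c} \M(Ta)^p\,dw \lesssim \|a\|_\infty^p\, w(Q) \sum_{k\ge 1} 2^{k(\sigma n - p(n+N+1))},$$
and the geometric sum converges exactly when $\sigma < p(n+N+1)/n$. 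Such $\sigma > r_w$ can be selected precisely when $n(r_w/p-1) < N+1$, which is ensured by $N > \lfloor n(r_w/p-1)\rfloor$. Using $\|a\|_\infty \le w(Q)^{-1/p}$, the integral is then bounded by a constant.

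For the local part, fix $q > r_w$ so that $w \in A_q$. By Rubio de Francia extrapolation starting from the unweighted $L^2$ bound for $T$, we obtain $T : L^q(w) \to L^q(w)$, while the classical Muckenhoupt theorem gives $M : L^q(w) \to L^q(w)$. Using the pointwise comparison $\M g \lesssim Mg$,
$$\|\M(Ta)\|_{L^q(w)} \lesssim \|Ta\|_{L^q(w)} \lesssim \|a\|_{L^q(w)} \le \|a\|_\infty\, w(Q)^{1/q} \le w(Q)^{1/q - 1/p}.$$
H\"older's inequality on $2Q$ combined with the doubling property of $w$ then yields $\|\M(Ta)\chi_{2Q}\|_{L^p(w)} \lesssim 1$, completing the atomic estimate.

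The main obstacle is the finite $L^\infty$-atomic decomposition itself: classical atomic decompositions for $H^p(w)$ produce only $L^q$-atoms or require infinite sums, so promoting them to a \emph{finite} $L^\infty$-atomic decomposition that controls the quasinorm uniformly is the crucial new ingredient, and this is presumably where the vector-valued maximal inequalities advertised in the abstract enter. A secondary technical point is bookkeeping the exponents so that $N > \lfloor n(r_w/p-1)\rfloor$ is precisely what makes the geometric sum in the non-local estimate converge for some admissible $\sigma > r_w$.
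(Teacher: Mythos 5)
Your approach is genuinely different from the paper's: you take the classical ``atom-by-atom'' route — normalize the atoms so $\|a\|_\infty \le w(Q)^{-1/p}$, prove a \emph{uniform} bound $\|\M(Ta)\|_{L^p(w)}\lesssim 1$, and sum via $p$-quasi-subadditivity — whereas the paper normalizes atoms by $\|a\|_\infty\le 1$, keeps them all together, and bounds $\big\|\sum_i\lambda_i M_\phi(Ta_i)\big\|_{L^p(w)}$ directly by $\big\|\sum_i\lambda_i\chi_{Q_i}\big\|_{L^p(w)}$, using the $L^q$-average Grafakos--Kalton lemma (Lemma~\ref{lemma:GK-q}) for the local pieces and the Fefferman--Stein vector-valued maximal inequality (Lemma~\ref{lemma:fef-stein-wts}) for the tails. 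The trade-off is decisive: the paper's route yields the full range $0<p<\infty$ in one stroke, whereas your reduction $\|\M Tf\|_{L^p(w)}^p\lesssim\sum_j\lambda_j^p$ uses $p$-subadditivity and therefore only works for $0<p\le 1$; moreover the size estimate $\sum_j\lambda_j^p\lesssim\|f\|_{H^p(w)}^p$ has no useful analogue for $p>1$. Since $w\in A_\infty$ need not lie in $A_p$ even when $p>1$, the range $p>1$ is a substantive part of the theorem, and your argument does not cover it. (Your $\ell^p$-quasi-norm version of the finite decomposition is also not literally Proposition~\ref{prop:finite-atomic-wts}, though for $p\le 1$ it does follow from the same Calder\'on--Zygmund construction.)

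Within the range $p\le 1$ there is a genuine gap in the tail estimate: you assert that the pointwise decay $|Ta(x)|\lesssim(\ell(Q)/|x-c_Q|)^{n+N+1}$ carries over ``up to a fixed dilate'' to $\M(Ta)$. That is not automatic, because $\phi_t*Ta(x)$ for $t\gtrsim|x-c_Q|$ averages $Ta$ over a ball that reaches into $Q$, where the decay bound for $Ta$ is unavailable. The paper resolves this by writing $\phi_t*Ta=K^t*a$ with $K^t=\phi_t*K$ and proving (Lemma~\ref{lemma:kernel-est}) that $K^t$ and its derivatives up to order $N+1$ satisfy the \emph{same} pointwise bounds as $K$, uniformly in $t$; one then Taylor-expands $K^t$ and uses the vanishing moments of $a$, as you do for $K$. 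This is exactly the content of Lemma~\ref{lemma:sio-tail} and is an ingredient your outline would need to supply.

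Two smaller remarks. First, deducing $T:L^q(w)\to L^q(w)$ for $w\in A_q$ ``by Rubio de Francia extrapolation starting from the unweighted $L^2$ bound'' is imprecise: extrapolation needs a weighted hypothesis. The weighted bound is the classical Coifman--Fefferman theorem (or extrapolation from $L^2(w)$, $w\in A_2$); in fact the paper sidesteps this entirely and uses only the \emph{unweighted} $L^q$ bound for $T$, converting it into a weighted estimate through Lemma~\ref{lemma:GK-q}. Second, your closing remark misidentifies where the vector-valued inequalities enter: they are used in the operator estimate itself, not in proving the finite atomic decomposition, which is a separate result from~\cite{1708.07195}.
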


\begin{remark}
Weighted Hardy space estimates for singular integrals (actually for
the more general class of multipliers) were proved by Str\"omberg and
Torchinsky~\cite{MR1011673}.  Theorem~\ref{thm:sio-weight} was proved
by Lu and Zhu~\cite{MR2927673} for singular integrals with $C^\infty$
kernels; see the bibliography of their paper
for earlier results. 
\end{remark}

\begin{theorem} \label{thm:sio-var}
Given an exponent function $\pp\in \Pp_0$, suppose $0<p_-\leq
p_+<\infty$ and $\pp \in LH$.   Suppose further that $T$ is a Calder\'on-Zygmund singular integral operator of convolution
type with regularity of order $N+1$, where
\[ N> \bigg\lfloor
n\bigg(\frac{1}{p_-}-1\bigg)\bigg\rfloor. \]
Then
$ T : H^\pp\rightarrow H^\pp.$
\end{theorem}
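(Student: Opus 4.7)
The plan is to combine the three ideas emphasized in the abstract: the finite atomic decomposition of $H^\pp$ into $L^\infty$-atoms, pointwise estimates for $T$ acting on atoms, and Rubio de Francia extrapolation from the weighted Theorem \ref{thm:sio-weight} to produce the required $\Lp$-norm inequality.

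First I would reduce to a dense subspace. Since finite sums of $L^\infty$-atoms are dense in $H^\pp$, it suffices to prove the estimate for $f = \sum_{j=1}^{M} \lambda_j a_j$, where each $a_j$ is supported on a cube $Q_j$, satisfies $\|a_j\|_\infty \le |Q_j|^{-1/p_0}$ for an auxiliary $p_0 \in (0, p_-)$ chosen so that $\lfloor n(1/p_0-1)\rfloor = \lfloor n(1/p_- - 1)\rfloor =: d$, and has vanishing moments of order up to $d$. Then $N > d$, and a standard Taylor-expansion/cancellation argument, using the $(N+1)$-fold regularity of $K$, gives, for $x \notin 2 Q_j$,
\[
|Ta_j(x)| \lesssim \Big(\frac{\ell(Q_j)}{|x-x_{Q_j}|}\Big)^{n+N+1}\|a_j\|_\infty,
\]
whence the grand maximal function satisfies $M_N(Ta_j)(x) \lesssim |Q_j|^{-1/p_0} M(\chi_{Q_j})(x)^{(n+N+1)/n}$ on the complement of $2Q_j$; the local piece on $2Q_j$ is dominated by the unweighted $L^{p_0}$-bound for $M_N \circ T$, itself a special case of Theorem \ref{thm:sio-weight}.

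Finally, summing atomically yields $M_N(Tf)(x) \lesssim \sum_j |\lambda_j| b_j(x)$ for explicit dominating functions $b_j$, and the required bound $\bigl\|\sum_j |\lambda_j|\, b_j\bigr\|_{\Lp} \lesssim \|f\|_{H^\pp}$ follows by Rubio de Francia extrapolation from the corresponding weighted inequality, which in turn is a consequence of Theorem \ref{thm:sio-weight} together with standard $A_\infty$ maximal function theory. Combined with the grand-maximal characterization $\|Tf\|_{H^\pp} \approx \|M_N(Tf)\|_{\Lp}$, this closes the proof. The main obstacle will be calibrating the extrapolation in the quasi-Banach range $p_- \le 1$: the $r_w$-dependent regularity condition in Theorem \ref{thm:sio-weight} must hold uniformly across the weights entering the extrapolation argument, and this is what dictates both the choice of the auxiliary exponent $p_0$ and the precise form of the hypothesis $N > \lfloor n(1/p_- - 1)\rfloor$.
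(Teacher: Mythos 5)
Your broad plan---finite atomic decomposition into $L^\infty$ atoms, a Taylor-expansion tail estimate off the double cube, and extrapolation from weighted inequalities---does match the paper's overall strategy. But there are two genuine gaps in how you treat the local piece and in how you propose to invoke extrapolation.

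First, the local piece. You assert that on $2Q_j$ the contribution is ``dominated by the unweighted $L^{p_0}$-bound for $M_N\circ T$,'' and then you say that ``summing atomically yields $M_N(Tf)(x)\lesssim \sum_j |\lambda_j| b_j(x)$ for explicit dominating functions $b_j$.'' These two sentences do not fit together: an $L^{p_0}$ norm bound for $M_N(Ta_j)$ on $2Q_j$ gives no pointwise majorant $b_j$, and since the $Q_j$ overlap you cannot add up local $L^{p_0}$ bounds in the quasi-Banach range to get a bound on the norm of the sum. This is exactly the difficulty that the Grafakos--Kalton-type inequality is designed to solve. The paper's proof uses Lemma~\ref{cor:var-GK} (the variable-exponent version of Lemma~\ref{lemma:GK-q}) with an exponent $q>\max(p_+,1)$: it replaces each local piece by its $L^q$ average times $\chi_{Q_j^*}$, and then the $L^q$ boundedness of $T$ and $M_\phi$ (not any quasi-Banach estimate) finishes the local part. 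Your approach is missing this mechanism for reassembling the overlapping local contributions, and without it there is no way to obtain the pointwise domination $M_N(Tf)\lesssim \sum_j|\lambda_j|b_j$ that your final step requires.

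Second, your plan is to extrapolate the whole Hardy-space inequality from Theorem~\ref{thm:sio-weight} to the variable-exponent setting, and you flag the calibration of the weights entering that extrapolation as ``the main obstacle'' without resolving it. The paper does something structurally different and cleaner: it does not extrapolate the final Hardy-space estimate at all. Instead, it extrapolates the two \emph{auxiliary} vector-valued inequalities---the variable-exponent Fefferman--Stein inequality (Lemma~\ref{prop:fef-stein-var}) and the variable-exponent Grafakos--Kalton inequality (Lemma~\ref{cor:var-GK}, proved from Lemma~\ref{lemma:GK-q} via Theorem~\ref{thm:rh-extrapol})---because these \emph{do} have the clean $(f,g)$-pair form required by extrapolation. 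Then the proof of Theorem~\ref{thm:sio-var} runs directly in $L^\pp$, in exact parallel with the weighted proof, with $I_1$ controlled by the $L^q$ Grafakos--Kalton lemma and $I_2$ controlled by the tail estimate $M_\phi(Ta)\lesssim M(\chi_Q)^\tau$ from Lemma~\ref{lemma:sio-tail} (with $\alpha=0$) together with the vector-valued maximal inequality, using that $p_-\tau>1$. If you adopt this division of labor, the calibration issue you are worried about disappears, since each extrapolation step is applied to a single lemma with a transparent weight class. (One minor normalization remark: the paper's atoms satisfy $\|a\|_\infty\le 1$ with atomic norm $\|\sum\lambda_i\chi_{Q_i}\|_{L^\pp}$; your $|Q_j|^{-1/p_0}$ normalization is consistent up to a relabeling, but introducing $p_0$ into the normalization is unnecessary and obscures the extrapolation step.)
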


\begin{remark}
Theorem~\ref{thm:sio-var} was first proved independently
in~\cite{DCU-dw-P2014, MR2899976}.  
\end{remark}

\medskip

Second, we consider the fractional integral operator, $I_\alpha$.
Given $0<\alpha<n$, define
\[  I_\alpha f(x) = \int_{\R^n} \frac{f(y)}{|x-y|^{n-\alpha}}\,dy.  \]
If $0<p<\frac{n}{\alpha}$ and
$\frac{1}{p}-\frac{1}{q}=\frac{\alpha}{n}$, then $I_\alpha : H^p
\rightarrow H^q$.  (See Stein~\cite{stein93} or Krantz~\cite{MR667967}.)  

\begin{theorem} \label{thm:fracint:wtd}
Given $0<\alpha<n$, $0<p<\frac{n}{\alpha}$, define $q$ by
$\frac{1}{p}-\frac{1}{q}=\frac{\alpha}{n}$.  If a weight $w$ is such
that $w^p \in RH_{\frac{q}{p}}$, then
$I_\alpha : H^p(w^p) \rightarrow H^q(w^q).$
\end{theorem}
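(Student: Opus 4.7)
The approach I would take mirrors the strategy advertised in the abstract: combine a finite $L^\infty$ atomic decomposition of $H^p(w^p)$, a pointwise control of the grand maximal function of $I_\alpha a$ for a single atom $a$, and off-diagonal Rubio de Francia extrapolation to pass from an unweighted base inequality to the general weight class $w^p\in RH_{q/p}$.

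First, I would invoke the finite $(p,\infty,d)$-atomic decomposition of $H^p(w^p)$ (which I expect to be established in a preliminary section of the paper) to write $f=\sum_{j=1}^{M}\lambda_{j}a_{j}$, where each atom $a_{j}$ is supported in a cube $Q_{j}$, satisfies $\|a_{j}\|_{\infty}\le w^{p}(Q_{j})^{-1/p}$, and has vanishing moments up to order $d$. I will choose $d$ large enough that the same family of atoms also recovers the $H^q(w^q)$-norm of $I_\alpha f$ through the grand maximal function; since the hypothesis $w^{p}\in RH_{q/p}$ links the weight classes of $w^{p}$ and $w^{q}$, a single choice of $d$ suffices. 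The goal then reduces to bounding $\|M_{N}^{0}(I_{\alpha}f)\|_{L^{q}(w^{q})}$ by $\|f\|_{H^{p}(w^{p})}$.

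Next, I would produce a pointwise bound for $M_{N}^{0}(I_{\alpha}a_{j})$ on each of $2Q_{j}$ and its complement. On $2Q_{j}$ a Hedberg-type argument should yield $M_{N}^{0}(I_{\alpha}a_{j})(x)\le C\,\ell(Q_{j})^{\alpha}\,[\M(|a_{j}|^{s})(x)]^{1/s}$ for an auxiliary exponent $1<s<p_{0}$, where $(p_{0},q_{0})$ is a convenient base pair on the line $1/p-1/q=\alpha/n$. Off $2Q_{j}$ the vanishing moments of $a_{j}$ together with the smoothness of the Riesz kernel give the decay $|I_{\alpha}a_{j}(x)|\le C\,\|a_{j}\|_{\infty}\,|Q_{j}|^{1+(d+1)/n}\,|x-x_{Q_{j}}|^{-(n+d+1-\alpha)}$, which I can recast (and lift to the grand maximal function) as $M_{N}^{0}(I_{\alpha}a_{j})(x)\le C\,\|a_{j}\|_{\infty}\,|Q_{j}|^{\alpha/n}\,[\M(\chi_{Q_{j}})(x)]^{\sigma}$ with $\sigma=(n+d+1-\alpha)/n>q/p$.

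Summing these bounds and taking $L^{q}(w^{q})$-norms then reduces the theorem to two vector-valued weighted maximal inequalities, the principal one being
\[
\Bigl\|\Bigl(\sum_{j}\bigl(|\lambda_{j}|\,|Q_{j}|^{\alpha/n}\,\|a_{j}\|_{\infty}\,[\M(\chi_{Q_{j}})]^{\sigma}\bigr)^{q}\Bigr)^{1/q}\Bigr\|_{L^{q}(w^{q})}\lesssim\|f\|_{H^{p}(w^{p})},
\]
together with an analogue for the Hedberg term. Such vector-valued inequalities lie in the scope of off-diagonal Rubio de Francia extrapolation for $I_{\alpha}$: they can be verified in an unweighted (or power-weighted) base case and then extrapolated to any weight $w$ with $w^{p}\in RH_{q/p}$, since this is precisely the condition that places $w$ in the off-diagonal extrapolation class for the pair $(p,q)$. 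The main obstacle I foresee is striking the right balance of the parameters $s$, $\sigma$ and $d$ so that the pointwise bound is sharp enough and the resulting vector-valued inequality sits strictly inside the range of the extrapolation theorem; once that is arranged, the closing arguments should be routine given the machinery already developed elsewhere in the paper.
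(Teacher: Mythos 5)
Your proposed outline shares the paper's overall architecture: finite atomic decomposition, a local/global split around each $Q_j$, a pointwise decay estimate for the tail of $I_\alpha a_j$, and off-diagonal extrapolation to pass from a base weighted estimate to general $w^p\in RH_{q/p}$. Two technical choices differ from the paper but are legitimate alternatives: for the local piece the paper uses the Grafakos--Kalton type Lemma~\ref{lemma:GK-q} with $L^{q_0}$ averages rather than a Hedberg bound; and for the global piece the paper absorbs the factor $|Q_j|^{\alpha/n}$ directly into a fractional maximal function, producing the clean bound $M_\phi(I_\alpha a_j)\lesssim M_{\alpha_\tau}(\chi_{Q_j})^\tau$ with $\alpha_\tau=\alpha/\tau$, which plugs straight into the vector-valued inequality for $M_{\alpha_\tau}$. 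Your version carries $|Q_j|^{\alpha/n}$ separately and uses the Hardy--Littlewood $M$; that route also works but requires an additional passage through Lemma~\ref{lem:frac} after the vector-valued Fefferman--Stein step. The paper also normalizes atoms by $\|a\|_\infty\le 1$ and measures coefficients via $\|\sum_i\lambda_i\chi_{Q_i}\|_{L^p(w^p)}$, which keeps the bookkeeping compatible with the vector-valued lemmas; your $\|a_j\|_\infty\le w^p(Q_j)^{-1/p}$ normalization makes the transition to Lemma~\ref{lem:frac} more awkward.

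There are two concrete slips. First, the ``principal'' vector-valued inequality you write down is an $\ell^q(L^q)$ bound,
\[
\Bigl\|\Bigl(\sum_{j}\bigl(|\lambda_{j}|\,|Q_{j}|^{\alpha/n}\,\|a_{j}\|_{\infty}\,[\M(\chi_{Q_{j}})]^{\sigma}\bigr)^{q}\Bigr)^{1/q}\Bigr\|_{L^{q}(w^{q})},
\]
but the quantity you actually need to control after summing the pointwise bounds is the scalar sum $\bigl\|\sum_j |\lambda_j|\,|Q_j|^{\alpha/n}\,\|a_j\|_\infty [\M(\chi_{Q_j})]^\sigma\bigr\|_{L^q(w^q)}$; when $q>1$ (which happens throughout the range $0<p<n/\alpha$), the $\ell^q$ quantity is strictly smaller, so bounding it does not give the theorem. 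The correct manipulation is, as in the paper, to write $\sum_j\mu_j[\M(\chi_{Q_j})]^\sigma=\sum_j[\M(\mu_j^{1/\sigma}\chi_{Q_j})]^\sigma$ and apply Fefferman--Stein with inner exponent $r=\sigma$ in $L^{q\sigma}(w^q)$, then finish with Lemma~\ref{lem:frac}. Second, your stated threshold $\sigma>q/p$ is not the correct requirement. The hypothesis $w^p\in RH_{q/p}$ is equivalent only to $w^q\in A_\infty$, so $r_{w^q}$ can be arbitrarily large, and the Fefferman--Stein step requires $w^q\in A_{q\sigma}$, i.e.\ $q\sigma>r_{w^q}$. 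This forces $N$ (your $d$) to be chosen depending on $r_{w^q}$, exactly as the paper does when it fixes $N$ so that $\bigl(\frac{n-\alpha+N+1}{n}\bigr)q>r_{w^q}$; a threshold expressed purely in terms of $q/p$ does not suffice. Once both of these points are repaired, the rest of your argument closes in the same way as the paper.
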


\begin{remark}
Theorem~\ref{thm:fracint:wtd} was first proved by Str\"omberg and
Wheeden~\cite{MR766221}; see also Gatto, Guti\'errez and
Wheeden~\cite{MR784004}.  
\end{remark}

\begin{theorem} \label{thm:fracint-var}
Given $0<\alpha<n$, and $\pp \in \Pp_0$, suppose  
$0<p_-\leq p_+< \frac{n}{\alpha}$ and $\pp \in LH$. Define  $\qq$ by
$\frac{1}{\pp}-\frac{1}{\qq} = \frac{\alpha}{n}$.  Then 
$ I_\alpha : H^\pp \rightarrow H^\qq.$
\end{theorem}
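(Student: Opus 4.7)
The plan is to follow the three-ingredient strategy highlighted in the abstract: a finite $L^\infty$-atomic decomposition of the input, a pointwise bound for the grand maximal function of $I_\alpha a$ for each atom $a$, and a vector-valued inequality in the variable Lebesgue space $\Lq$ coming from Rubio de Francia extrapolation. In effect, the variable-exponent theorem will be reduced to a weighted fractional estimate---essentially Theorem~\ref{thm:fracint:wtd}---propagated through the atomic decomposition.

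First, I would fix $f$ in a suitable dense subset of $H^\pp$ and, using the finite atomic decomposition for variable Hardy spaces, write $f = \sum_{j} \lambda_j a_j$ as a finite sum of $L^\infty$-atoms. Each $a_j$ is supported in a cube $Q_j$, has vanishing moments up to an order $d > \lfloor n(1/p_- - 1) \rfloor$, and is normalized so that $\|a_j\|_\infty$ is controlled by a negative power of $|Q_j|$, with the scalars satisfying
\[
\Big\|\sum_j \frac{|\lambda_j|}{\|\chi_{Q_j}\|_{\Lp}}\chi_{Q_j}\Big\|_{\Lp} \lesssim \|f\|_{H^\pp}.
\]

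Second, for each atom $a = a_j$ supported in $Q = Q_j$, I would establish a pointwise bound of the form
\[
M_\varphi(I_\alpha a)(y) \lesssim \frac{|Q|^{\alpha/n}}{\|\chi_{Q}\|_{\Lp}}\bigl(\M_{t} \chi_{Q}(y)\bigr)^{1/t},
\]
where $M_\varphi$ is the grand maximal function (which characterizes $H^\qq$), $\M_t$ is a suitable fractional maximal operator, and $t>1$ is chosen depending on $\alpha, p_-, d, n$. This is obtained by the usual local/far-field split: on $2Q$ one uses the $L^r \to L^{r^*}$ mapping of $I_\alpha$ with $1/r - 1/r^* = \alpha/n$, while on $\R^n \setminus 2Q$ one Taylor expands the Riesz kernel $|x-y|^{-(n-\alpha)}$ around the center of $Q$ and exploits the cancellation of order $d$ to obtain the required polynomial decay.

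Combining the atomic decomposition, the pointwise bound, and the maximal characterization $\|g\|_{H^\qq} \approx \|M_\varphi g\|_{\Lq}$ gives
\[
\|I_\alpha f\|_{H^\qq} \lesssim \Big\|\sum_j \frac{|\lambda_j|}{\|\chi_{Q_j}\|_{\Lp}}\,|Q_j|^{\alpha/n}\,\bigl(\M_{t}\chi_{Q_j}\bigr)^{1/t}\Big\|_{\Lq},
\]
which is a vector-valued off-diagonal fractional maximal inequality in the variable Lebesgue scale. Such an inequality follows from Rubio de Francia extrapolation applied to the Muckenhoupt--Wheeden weighted bound $I_\alpha : L^r(w^r) \to L^{r^*}(w^{r^*})$ for $w \in A_{r,r^*}$, combined with $\tfrac{1}{\pp} - \tfrac{1}{\qq} = \tfrac{\alpha}{n}$ and the log-Hölder hypothesis on $\pp$. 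The main obstacle will be handling the quasi-Banach range $p_- < 1$: the standard device is the Aoki--Rolewicz rescaling $\pp \mapsto \pp/p_0$, $\qq \mapsto \qq/p_0$ with $0 < p_0 < p_-$, which moves the problem into the Banach range where off-diagonal vector-valued extrapolation in $\Lp$ applies in its usual form. A secondary, more technical point is the simultaneous calibration of the cancellation order $d$, the local exponent $r$, and the power $t$ so that both the extrapolation hypotheses and the far-field polynomial decay are satisfied at once.
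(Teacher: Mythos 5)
Your overall strategy (finite $L^\infty$-atomic decomposition, pointwise majorization of the grand maximal function of $I_\alpha$ on an atom, vector-valued extrapolation in $\Lq$) is the same as the paper's, and the reduction of Theorem~\ref{thm:fracint-var} to Theorem~\ref{thm:fracint:wtd} via extrapolation is exactly what the paper does. However, there are two concrete points where your proposed route diverges, and both need more care than you give them. First, the paper does \emph{not} work with a single pointwise bound valid everywhere: it splits into the local piece $\chi_{Q_i^*}$ and the far-field piece $\chi_{(Q_i^*)^c}$, and handles the local piece not by a pointwise inequality but by the Grafakos--Kalton averaging lemma in the variable-exponent form (Lemma~\ref{cor:var-GK}) together with the off-diagonal cube-dilation lemma (Lemma~\ref{lemma:frac-var}), using only the unweighted $L^{p_0}\to L^{q_0}$ boundedness of $I_\alpha$. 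Your shortcut of folding $2Q$ into one pointwise bound can in fact work \emph{for $I_\alpha$ specifically} (since $\|I_\alpha a\|_\infty\lesssim \ell(Q)^\alpha$ for an $L^\infty$ atom), but it is worth knowing that the paper's route is chosen precisely because it also covers singular integrals of both convolution and non-convolution type, for which $Ta$ is not in $L^\infty$. Second, your parameter bookkeeping is off: the paper's far-field bound is $M_\phi(I_\alpha a)(x)\lesssim M_{\alpha_\tau}(\chi_Q)(x)^{\tau}$ with $\tau=\frac{n+N+1}{n}>1$ and $\alpha_\tau=\alpha/\tau$, so the outer exponent is $\tau>1$, not $1/t<1$; and the rescaling used to escape the quasi-Banach range is not an arbitrary Aoki--Rolewicz $p_0<p_-$, but exactly this $\tau$: one chooses $N$ large enough that $\tau p_->1$ and then applies the variable off-diagonal Fefferman--Stein inequality (Lemma~\ref{prop:frac-fef-stein-var}) to $M_{\alpha_\tau}:L^{\tau\pp}\to L^{\tau\qq}$. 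The coupling between the number of vanishing moments, the Taylor-remainder decay, and the rescaling exponent is therefore not a ``secondary technical point'' but the pivot of the argument, and the form $|Q|^{\alpha/n}(\M_t\chi_Q)^{1/t}$ you write down does not plug directly into any of the stated vector-valued inequalities without first being recast into the paper's $M_{\alpha_\tau}(\chi_Q)^\tau$ shape.
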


\begin{remark}
Theorem~\ref{thm:fracint-var} was first proved by Rocha and
Urciuolo~\cite{MR3233547}.  
Sawano~\cite[Theorem~5.1]{MR3090168} states this result, but it seems that the proof only shows for $I_\alpha :
H^\pp \rightarrow L^\qq$.  
\end{remark}

\medskip

Third, we consider Calder\'on-Zygmund operators of non-convolution
type, as defined by Coifman and Meyer~\cite{MR518170}.  An
operator $T$ is a Calder\'on-Zygmund operator if it is a bounded
operator on $L^2(\R^n)$, and if all $f\in L^\infty_c$ and
$x\not\in\supp(f)$,
\[ Tf(x) = \int_{\R^n} K(x,y)f(y)\,dy, \]
where the  distributional kernel coincides with a function $K$ defined
away the diagonal on $\mathbb{R}^{2n}$ and satisfies the standard
 estimates
\begin{gather}
\label{eq.8001}
|K(x,y)|\le \frac{C}{|x-y|^n},\quad x\ne y,\\
\label{eq.8002}
|K(x,y+h) - K(x,y)|+|K(x+h,y) - K(x,y)|\le \frac{C|h|^\delta}{|x-y|^{n+\delta}}
\end{gather}
for all $|h|\le \frac12|x-y|$, where $C>0$ and $0<\delta\le 1$.

For boundedness on Hardy spaces we will also need to assume two
additional conditions on $T$.  First, we will need that the
kernel $K$ satisfies for some $N>0$ the additional smoothness condition
\begin{equation}
\label{eq.8003}
|\partial^{\beta}_yK(x,y+h)-\partial^{\beta}_yK(x,y)|\le \frac{C|h|^\delta}{|x-y|^{n+N+\delta}}
\end{equation}
for all $x\ne y$, $|h|\le \frac{|x-y|}2$ and all $|\beta|=N$. 
Second, we will need that the operator $T$ has $L$ vanishing moments,
in the sense that 
\begin{equation}
\label{eq.8004}
\int x^{\beta}Ta(x)dx=0
\end{equation}
for all $(L+1,\infty)$ atoms $a$ and $|\beta|\le L$.  We note that
this moment condition is satisfied by all convolution type singular
integrals (see~\cite[Lemma~2.1]{GNNS-preprint}), and is a necessary
condition for $T$ to map into unweighted $H^p$
(see~\cite[Theorem~7]{MR3525560}).

\begin{theorem} \label{thm:wtd-nonconv}
  Given $w\in A_\infty$ and $0<p<\infty$, suppose that $T$ is a
  Calder\'on-Zygmund operator associated with a kernel $K$ that
 satisfies \eqref{eq.8003} for all $|\beta| = L+1$, and suppose $T$ has $L$
 vanishing moments \eqref{eq.8004}, where
\[
L = \max\bigg(\bigg\lfloor
n\bigg(\frac{r_w}{p}-1\bigg)\bigg\rfloor,-1\bigg).
\]
(If $L=-1$, then condition \eqref{eq.8004} can be omitted.)  Then
$T: H^p(w) \rightarrow H^p(w)$.
\end{theorem}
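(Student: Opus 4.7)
The plan is to implement the three-ingredient strategy highlighted in the abstract: a finite atomic decomposition of $f$ into $L^\infty$ atoms, a pointwise estimate on a grand maximal function of each $Ta_i$, and then assembling the pieces using a weighted maximal inequality, with Rubio de Francia extrapolation to widen the range of $p$.

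Fix $p_0 > \max(1,\,p\, r_w)$, so that $w \in A_{p_0/p}$; this is possible since $r_w < \infty$. A finite $(p_0,\infty,L)$-atomic decomposition for $H^p(w)$ (a weighted analogue of the Meda--Sj\"ogren--Vallarino theorem, which we assume has been established earlier in the paper) lets us reduce to $f$ in a dense subclass admitting a representation $f = \sum_{i=1}^{M} \lambda_i a_i$, where each $a_i$ is supported on a cube $Q_i$, satisfies $\|a_i\|_\infty \le \|\chi_{Q_i}\|_{L^p(w)}^{-1}$ and $\int y^\beta a_i(y)\,dy=0$ for $|\beta|\le L$, together with the quantitative control $\sum_i \lambda_i^{\min(p,1)} \lesssim \|f\|_{H^p(w)}^{\min(p,1)}$.

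Let $M_N$ denote a grand maximal function characterising $H^p(w)$ for some large $N$. The crux is the pointwise estimate
\[
M_N(Ta_i)(x) \lesssim \|\chi_{Q_i}\|_{L^p(w)}^{-1}\bigl[M(\chi_{Q_i})(x)\bigr]^{s}, \qquad s = \frac{n+L+1}{n},
\]
with implied constant independent of $i$. For $x \in 2Q_i$ it follows from $L^{p_0}$-boundedness of $T$ (inherited from $L^2$-boundedness via \eqref{eq.8001}--\eqref{eq.8002}) combined with Kolmogorov's inequality. For $x \notin 2Q_i$ one uses the vanishing moment condition \eqref{eq.8004} on $Ta_i$ (valid since each $a_i$ is an $(L+1,\infty)$ atom) by pairing $Ta_i$ against a test function $\varphi$ from the definition of $M_N$, after subtracting the order-$L$ Taylor polynomial of $\varphi$ at $x_{Q_i}$; the resulting decay $(r_{Q_i}/|x-x_{Q_i}|)^{n+L+1}$ then falls out from \eqref{eq.8003}. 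The hypothesis $L \ge \lfloor n(r_w/p - 1)\rfloor$ is calibrated precisely so that $s > r_w/p$, hence $sp > r_w$ and $w \in A_{sp}$.

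For $p\le 1$, subadditivity $(\sum x_i)^p\le\sum x_i^p$ combined with the pointwise bound yields
\[
\|Tf\|_{H^p(w)}^p = \int M_N(Tf)^p\, w \lesssim \sum_i \lambda_i^p \|\chi_{Q_i}\|_{L^p(w)}^{-p}\int \bigl[M(\chi_{Q_i})\bigr]^{sp} w\,dx \lesssim \sum_i \lambda_i^p \lesssim \|f\|_{H^p(w)}^p,
\]
using $M:L^{sp}(w)\to L^{sp}(w)$ (since $w\in A_{sp}$) in the second-to-last step. For $p>1$ with $w\in A_p$ one has $H^p(w)=L^p(w)$ and the classical weighted $L^p$-theory for $T$ applies; otherwise, Rubio de Francia extrapolation from the $p\le 1$ case (with a reverse H\"older adjustment of the weight) handles the remaining subcase. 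The main technical obstacle is the pointwise bound above: simultaneously exploiting the cancellation \eqref{eq.8004}, the higher-order smoothness \eqref{eq.8003}, and an $L^{p_0}$ near-field bound, while keeping careful track of all parameters so that $s p > r_w$, is the heart of the argument.
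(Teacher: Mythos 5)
Your overall architecture (finite atomic decomposition, local/global split, pointwise tail estimate on $M_N(Ta_i)$, then a weighted maximal inequality) is the same as the paper's, but there are two genuine gaps.

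\textbf{The local piece.} You assert a pointwise bound
\[
M_N(Ta_i)(x) \lesssim \|\chi_{Q_i}\|_{L^p(w)}^{-1}\bigl[M(\chi_{Q_i})(x)\bigr]^{s}
\]
valid for \emph{all} $x$, with the case $x\in 2Q_i$ justified by ``$L^{p_0}$-boundedness of $T$ combined with Kolmogorov's inequality.'' But Kolmogorov (or H\"older against the $L^{p_0}$ bound) produces an \emph{integral} estimate of the form $\int_{2Q_i} M_N(Ta_i)^p\,dx \lesssim \|a_i\|_\infty^p |Q_i|$, not a pointwise one; and indeed the pointwise bound is false in general, since $Ta_i$ need not be bounded near $\supp a_i$. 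More importantly, you then integrate against the weight $w$, and the step from an unweighted local integral bound to $\int_{2Q_i}M_N(Ta_i)^p w\,dx \lesssim \|a_i\|_\infty^p\,w(Q_i)$ requires an additional reverse H\"older condition $w\in RH_{(p_0/p)'}$ to push the weight out of the inner integral. This is precisely where the paper's Lemma~\ref{lemma:GK-q} comes in: it packages the H\"older-plus-$RH$ computation into
\[
\Big\|\sum_i g_i\Big\|_{L^p(w)}\lesssim\Big\|\sum_i\Big(\avgint_{Q_i}g_i^{q}\Big)^{1/q}\chi_{Q_i}\Big\|_{L^p(w)},
\]
so the local contribution $\sum_i\lambda_i M_\phi(Ta_i)\chi_{Q_i^*}$ is estimated directly via the $L^q$ boundedness of $T$ and $M_\phi$, together with the vector-valued maximal inequality (Lemma~\ref{lemma:fef-stein-wts}) to dilate $Q_i^*$ back to $Q_i$. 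Your proposal never makes this step explicit, and as written the inequality you display after ``For $p\le 1$, subadditivity\ldots'' is only correct on $\bigcup_i (2Q_i)^c$.

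\textbf{The atomic decomposition and the range $p>1$.} You use an $\ell^p$-normalized decomposition $\sum_i\lambda_i^{\min(p,1)}\lesssim\|f\|_{H^p(w)}^{\min(p,1)}$ with atoms normalized by $\|a_i\|_\infty\le\|\chi_{Q_i}\|_{L^p(w)}^{-1}$. This is equivalent to the classical Str\"omberg--Torchinsky form for $p\le 1$, but it does not give the right norm equivalence for $p>1$; and your handling of the regime $1<p<r_w$ (``Rubio de Francia extrapolation from the $p\le 1$ case, with a reverse H\"older adjustment of the weight'') is asserted without detail and is not obviously available, since what would need to be extrapolated is a weighted Hardy space estimate, not a weighted Lebesgue estimate. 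The paper instead uses the uniform formulation $\|\sum_i\lambda_i\chi_{Q_i}\|_{L^p(w)}\lesssim\|f\|_{H^p(w)}$ from Proposition~\ref{prop:finite-atomic-wts}, valid for all $0<p<\infty$, which allows the entire proof (local piece via Lemma~\ref{lemma:GK-q}, global piece via Lemma~\ref{lemma:nonconv-max} and Lemma~\ref{lemma:fef-stein-wts}) to run without case analysis in $p$. The pointwise tail estimate you describe for $x\notin 2Q_i$, together with the calibration $sp>r_w$, is essentially the content of the paper's Lemma~\ref{lemma:nonconv-max}, and that part of your argument is on the right track.
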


\begin{remark}
Norm inequalities for non-convolution Calder\'on-Zygmund operators
have been considered by a number of authors.  In the unweighted case,
Alvarez and Milman~\cite{MR3785798} show that if $T$ is a
Calder\'on-Zygmund operator, then $T : H^p \rightarrow H^p$ for
$\frac{n}{n+\delta}<p\leq 1$, where $\delta$ is the exponent in
\eqref{eq.8003}.   From Theorem~\ref{thm:wtd-nonconv} we get the
slightly larger range $\frac{n}{n+1}<p\leq 1$ but only if we assume
that $L=0$, so that we require greater regularity on the operator $T$
than they do.

Their results were generalized to the full range of $0<p\leq 1$ in the
unweighted case by Hart and Lu~\cite{MR3448918},  and to $0<p<\infty$
and $w\in A_\infty$ in
the weighted case by Hart and Oliveira~\cite{MR3649238}.  Their
results are not directly comparable to ours, though the conditions on
the weights and the range of $p$ are roughly the same.   They assume a great
deal more regularity on the kernel $K$:  they require that a version
of~\eqref{eq.8003} holds that involves derivatives in both $x$ and
$y$, whereas we only require derivatives in $y$.   We also note that
rather than \eqref{eq.8004}, they require that $T^*(x^\beta)=0$.  The
two are formally equivalent, but the latter requires additional machinery to define.  
\end{remark}

\begin{theorem} \label{thm:var-nonconv}
  Given an exponent function $\pp \in \Pp_0$, suppose $0<p_-\leq
  p_+<\infty$ and $\pp \in LH$.  Suppose further that $T$ is a
  Calder\'on-Zygmund operator  associated with a kernel $K$ that
 satisfies \eqref{eq.8003} for all $|\beta| = L+1$, and suppose $T$ has $L$
 vanishing moments \eqref{eq.8004}, where
\[
L = \max\bigg(\bigg\lfloor
n\bigg(\frac{1}{p_-}-1\bigg)\bigg\rfloor,-1\bigg).
\]
(If $L=-1$, then condition \eqref{eq.8004} can be omitted.)  Then
$T: H^\pp \rightarrow H^\pp$.
\end{theorem}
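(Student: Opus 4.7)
The plan is to deduce Theorem~\ref{thm:var-nonconv} from the weighted analogue Theorem~\ref{thm:wtd-nonconv} via Rubio de Francia extrapolation on variable Hardy spaces, mirroring the strategy advertised in the abstract for passing from $H^p(w)$ estimates to $H^\pp$ estimates. Presumably this is the same route that yields Theorem~\ref{thm:sio-var} from Theorem~\ref{thm:sio-weight} and Theorem~\ref{thm:fracint-var} from Theorem~\ref{thm:fracint:wtd}, so the present theorem should be proved in parallel once the weighted non-convolution estimate is in hand.

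Concretely, I would begin by selecting an auxiliary exponent $p_0\in(0,p_-)$ close enough to $p_-$ that the integer part is unchanged, i.e.
\[
\bigg\lfloor n\bigg(\frac{1}{p_0}-1\bigg)\bigg\rfloor = \bigg\lfloor n\bigg(\frac{1}{p_-}-1\bigg)\bigg\rfloor = L,
\]
which is possible since the floor function is locally constant off a discrete set (and in the $L=-1$ case the choice is trivial). Next I would restrict attention to the subfamily of $w\in A_\infty$ whose $r_w$ lies in a small neighborhood of $1$, so that $\lfloor n(r_w/p_0-1)\rfloor\le L$ as well. For this pair $(p_0,w)$ the smoothness parameter and the vanishing moment condition \eqref{eq.8004} match exactly those of Theorem~\ref{thm:wtd-nonconv}, which therefore yields
\[
T\colon H^{p_0}(w)\longrightarrow H^{p_0}(w)
\]
with bounds depending only on $[w]_{A_\infty}$.

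The decisive second step is to invoke the Rubio de Francia-type extrapolation theorem for variable Hardy spaces--one of the interlocking ingredients the abstract promises. In the form needed here, it asserts that uniform weighted $H^{p_0}$ bounds over a sufficiently rich subfamily of $A_\infty$ upgrade to an $H^\pp$ bound whenever $\pp\in\Pp_0\cap LH$ satisfies $p_0<p_-\le p_+<\infty$. The log-H\"older hypothesis on $\pp$, combined with the gap $p_0<p_-$, is exactly what guarantees that the maximal operator is bounded on $L^{(\pp/p_0)'(\cdot)}$, which is the standard analytic prerequisite. Applied to the weighted estimate just produced, this directly gives $T\colon H^\pp\to H^\pp$.

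The main technical obstacle is the simultaneous calibration in the first step: in Theorem~\ref{thm:wtd-nonconv} the smoothness index $L$ depends on both $p$ and $r_w$, while in Theorem~\ref{thm:var-nonconv} it depends only on $p_-$, and the hypothesis $L\ge\lfloor n(1/p_--1)\rfloor$ is tight precisely so that the joint choice of $p_0<p_-$ and weights with $r_w$ near $1$ can be made. A secondary, more bookkeeping issue---which I expect to be handled by the finite $L^\infty$ atomic decomposition and grand-maximal-function machinery from earlier sections---is verifying that the restricted class of weights is still rich enough to feed the extrapolation theorem, and that $T$ is well defined and extrapolable on the dense subclass of atoms used to represent elements of $H^\pp$.
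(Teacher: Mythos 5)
Your proposal takes a genuinely different route from the paper. You propose to prove the weighted estimate of Theorem~\ref{thm:wtd-nonconv} uniformly for $w\in A_1$ (after calibrating $p_0$ so that the required moment count matches $L$) and then pass to $H^\pp$ by extrapolating the inequality $\|\M_{N_0}(Tf)\|_{L^{p_0}(w)}\lesssim\|\M_{N_0}f\|_{L^{p_0}(w)}$ into variable Lebesgue spaces. The paper does not do this. In fact, your opening premise is incorrect: Theorem~\ref{thm:sio-var} is \emph{not} obtained from Theorem~\ref{thm:sio-weight} by Hardy-space-level extrapolation. The paper proves the weighted and variable statements in parallel by the same direct argument, namely the finite atomic decomposition (Propositions~\ref{prop:finite-atomic-wts} and~\ref{prop:finite-atomic-var}), a local/global split of $M_\phi Tf$, the Grafakos--Kalton-type lemmas (Lemmas~\ref{lemma:GK-q} and~\ref{cor:var-GK}) for the local piece, and the pointwise atom estimate combined with vector-valued maximal inequalities (Lemmas~\ref{lemma:fef-stein-wts}, \ref{prop:fef-stein-var}) for the global piece. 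For the non-convolution case, Section~\ref{section:nonconv} states explicitly that the proof of Theorem~\ref{thm:var-nonconv} is identical to that of Theorem~\ref{thm:sio-var} once the global-piece estimate $M_\phi(Ta)(x)\lesssim M(\chi_Q)(x)^{(n+L+1)/n}$ is supplied by Lemma~\ref{lemma:nonconv-max}. Extrapolation in the paper occurs only at the level of these Lebesgue-space auxiliary lemmas, never for the Hardy-space estimate itself.

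The crucial gap in your sketch is the invocation of ``the Rubio de Francia-type extrapolation theorem for variable Hardy spaces.'' No such theorem is stated or proved in the paper; Theorems~\ref{thm:rh-ACM}--\ref{thm:offdiagrh-extrapol} extrapolate among Lebesgue spaces, and they start from reverse H\"older hypotheses rather than from $A_1$, so they do not directly accept the family of weights you restrict to. While an $A_1$-based extrapolation theorem into $L^\pp$ does exist in the literature and your plan could in principle be carried out by feeding it the pairs $(\M_{N_0}(Tf),\M_{N_0}f)$, this would require additional care beyond what you label ``bookkeeping'': you would need to (i) verify that the constants in Theorem~\ref{thm:wtd-nonconv} depend only on $[w]_{A_1}$ and not on $w$ itself so the extrapolation hypothesis is met, (ii) reconcile the grand maximal index $N_0$, which the paper chooses in a $w$- and $p$-dependent manner for $H^p(w)$ and in a $\pp$-dependent manner for $H^\pp$, so that a single $N_0$ is valid on both sides, and (iii) be more careful than ``$\lfloor n(r_w/p_0-1)\rfloor\le L$'': the kernel condition~\eqref{eq.8003} at order $L+1$ does not imply the same condition at a lower order, so matching the regularity parameter exactly, not just bounding it, is essential. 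The correct restriction for your calibration is $w\in A_1$, where $r_w=1$ and the floor equals $L$ exactly by your choice of $p_0$; the vaguer ``$r_w$ near $1$'' can move the floor. None of these points is fatal, but they are not addressed, and the route as a whole is not the one the paper takes.
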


\begin{remark}
  Theorem~\ref{thm:var-nonconv} is new.  However, a slightly weaker
  result was implicitly proved as a special case of a result for multilinear
  Calder\'on-Zygmund operators recently proved in~\cite{1708.07195}.
\end{remark}

\medskip

Most of our results are not new; however,  our main contribution in
this paper is our new approach to the proofs, which we believe is
significantly simpler and more transparent than existing proofs.
Therefore, before giving the actual proofs, we want
to summarize their main ideas.

The first component is a
finite atomic decomposition in terms of $L^\infty$ atoms.  In the
unweighted case such a decomposition was first proved by Meda,
Sj\"ogren and Vallarino~\cite{MR2399059}.  They showed that on a dense
set in $H^p$, $0<p<1$, it is possible to write a function $f$ as a
finite sum of $(N,\infty)$ atoms
\[   f = \sum_{i=1}^M \lambda_i a_i \]
in such a way that
\[ \|f\|_{H^p} \approx \bigg(\sum_{i=1}^M
  \lambda_i^p\bigg)^{\frac{1}{p}}. \]
The advantage of such a decomposition is that it allows the
interchange of the operator and the sum without having to worry about
the convergence of the sum, and reduces the problem to estimating the
operator on individual atoms.  In a previous
paper~\cite{1708.07195} we extended the finite atomic decomposition to weighted Hardy
spaces; here we prove it for variable Hardy spaces, generalizing a
result proved in~\cite{DCU-dw-P2014}.

The second component of our proofs starts with a vector-valued inequality
due to Grafakos and Kalton~\cite{MR1852036}:  given any collection of
cubes $Q_k$ and functions $g_k$ with $\supp(g_k)\subset Q_k$, then for
$0<p\leq 1$,
\[ \bigg\|\sum_k g_k \bigg\|_p 
\lesssim 
\bigg\|\sum_k \bigg(\avgint_{Q_k} g_k\,dx\bigg)\chi_{Q_k}
\bigg\|_p. \]
Their proof was quite technical, since the cubes are not assumed to be
disjoint.  Though not needed, we give a very elementary proof of this
inequality of Grafakos and Kalton.  More importantly, however, we prove versions that
hold for $p>1$ on weighted spaces, and on variable exponent spaces.
Using these, we can divide the estimate of the operator on individual
atoms into their local and global pieces and then estimate the local
piece using unweighted $L^q$ estimates for $q>\max(p,1)$.  We also give a new
proof of a variant of the Grafakos-Kalton lemma that is used in the
off-diagonal case for the fractional integral operator and that in the
weighted case is due to Str\"omberg and Wheeden~\cite{MR766221} and in
the variable exponent case to Sawano~\cite{MR3090168}.  Again our
proofs are much simpler than the original ones.

The third component of our proofs are vector-valued inequalities for
the Hardy-Littlewood maximal operator.  It is a classical result due to Fefferman and
Stein~\cite{fefferman-stein71} that for $1<p,r<\infty$,
\[ \bigg\|\bigg( \sum_k (Mg_k)^r\bigg)^{\frac{1}{r}} \bigg\|_p 
\lesssim
\bigg\|\bigg( \sum_k |g_k|^r\bigg)^{\frac{1}{r}} \bigg\|_p. \]
A similar inequality also holds for the fractional maximal operator,
$M_\alpha$, $0<\alpha<n$, and is due to Ruiz and
Torrea~\cite{MR831197}: if $1<p<\frac{n}{\alpha}$,
$\frac{1}{p}-\frac{1}{q}=\frac{\alpha}{n}$, and $1<r<\infty$,
\[ \bigg\|\bigg( \sum_k (M_\alpha g_k)^r\bigg)^{\frac{1}{r}} \bigg\|_q 
\lesssim
\bigg\|\bigg( \sum_k |g_k|^r\bigg)^{\frac{1}{r}} \bigg\|_p. \]
Both of these inequalities extend to weighted and variable Lebesgue
spaces, and we use them to estimate the global part of the operator applied to an
atom.  

The final component of our proofs is the theory of Rubio de Francia
extrapolation.  Using the reformulation in terms of extrapolation
pairs~\cite{MR2797562}, the above vector-valued inequalities for
maximal operators are an immediate consequence of the corresponding
scalar inequalities.  The variants of the Grafakos-Kalton lemma
described above also follow easily from extrapolation.  For these we
need to use more recent versions of extrapolation, including limited
range extrapolation~\cite{MR3788859}, extrapolation with respect to
reverse H\"older weights~\cite{MR3785798}, and extrapolation into
variable Lebesgue
spaces~\cite{cruz-fiorenza-book,cruz-uribe-fiorenza-martell-perez06,
  MR2797562}.  Our proofs also depend on several new extrapolation
results which we prove here.

\medskip

The remainder of this paper is organized as follows.  In
Section~\ref{section:prelim} we give the necessary definitions and
results about weighted and variable exponent Hardy spaces.  In
Section~\ref{section:extrapol} we give the versions of extrapolation
that we use and prove the new versions we need.  In
Section~\ref{section:lemmas} we state and prove the vector-valued
inequalities we use.  In Section~\ref{section:sio} we prove
Theorems~\ref{thm:sio-weight} and~\ref{thm:sio-var}; in
Section~\ref{section:frac-int} we prove Theorems~\ref{thm:fracint:wtd}
and~\ref{thm:fracint-var}; and in Section~\ref{section:nonconv} we
prove Theorems~\ref{thm:wtd-nonconv} and~\ref{thm:var-nonconv}.  Even
though the proofs are given in separate sections, we want to emphasize
that they all have a common structure.  
Finally, in Section~\ref{section:BFS} we briefly discuss generalizing
our results to Hardy spaces defined with respect to other scales of
Banach function spaces.

Throughout this paper, $n$ will denote the dimension of the underlying
space, $\R^n$.  By a cube $Q$ we will always mean a cube whose sides
are parallel to the coordinate axes, and for $\tau>0$, $\tau Q$ will
denote the cube with the same center such that
$\ell(\tau Q)=\tau \ell(Q)$.  Given $Q$, let $Q^*=2\sqrt{n}Q$ and
$Q^{**}=(Q^*)^*$.  We define the average of a function $f$ on $Q$ by
$\avgint_Q f\,dx = |Q|^{-1}\int_Q f\,dx$.  By $C,\,c$, etc. we will
mean constants that may depend on the underlying parameters in the
proof.  Their values may change at each appearance.  Given two
quantities $A$ and $B$, we will write $A\lesssim B$ if there exists a
constant $c$ such that $A\leq cB$.  If $A\lesssim B$ and
$B\lesssim A$, we write $A\approx B$.

\section{Preliminaries}
\label{section:prelim}

In this section we gather together some basic results about weighted
and variable exponent spaces.  We begin with some information about
weights.  For more information, see~\cite{MR2797562, duoandikoetxea01,
  garcia-cuerva-rubiodefrancia85}.   By a weight we mean a
non-negative, locally integrable function $w$ such that
$0<w(x)<\infty$ a.e.  For $1<p<\infty$, a weight is in the Muckenhoupt
class $A_p$ if for every cube $Q$
\[  \avgint_Q w\,dx \left(\avgint_Q w^{1-p'}\,dx\right)^{p-1} \leq C, \]
and when $p=1$, $w\in A_1$ if for every cube $Q$ and a.e. $x\in Q$,
\[  \avgint_Q w\,dx \leq Cw(x). \]
If $1<p<\infty$ and $w\in A_p$, then the Hardy-Littlewood maximal
operator, defined by
\[ Mf(x) = \sup_Q \avgint_Q |f(y)|\,dy \cdot \chi_Q(x), \]
is bounded on $L^p(w)$.

Define the set 
\[ A_\infty = \bigcup_{p\geq 1} A_p.  \]
Given a weight $w\in A_\infty$, define
\[ r_w = \inf\{ r\geq 1 : w \in A_r \}. \]
A weight $w\in A_\infty$ if and only if $w\in RH_s$ for some $s>1$:
that is, for every cube $Q$,
\[ \left(\avgint_Q w^s\,dx\right)^{\frac{1}{s}}\leq C\avgint_Q
  w\,dx.  \]
Furthermore, we have the property that $w\in RH_s$ if and only if
$w^s\in A_\infty$.   The limiting class $RH_\infty$ is defined to be
all $w$ such that for every cube $Q$ and a.e. $x\in Q$,
\[ w(x) \leq C\avgint_Q w\,dx. \]

Given $1<p,q<\infty$, a weight satisfies the $A_{p,q}$ condition of Muckenhoupt and Wheeden if for every cube $Q$,
\[  \left(\avgint_Q w^q\,dx\right)^{\frac{1}{q}}
 \left(\avgint_Q w^{-p'}\,dx\right)^{
\frac{1}{p'}} \leq C. \]
It follows from the definition that $w\in A_{p,q}$ if and only if $w^q
\in A_{1+\frac{q}{p'}}$.  
When $p=1$ and $q>1$; we say that $w\in
A_{1,q}$ if for every cube $Q$ and almost every $x\in Q$,
\[ \avgint_Q w^q\,dx \leq Cw(x)^q. \]
This is clearly equivalent to $w^q\in A_1$. 
Given $0\leq \alpha<n$ and $1<p<\frac{n}{\alpha}$, define $q$ by
$\frac{1}{p}-\frac{1}{q}=\frac{\alpha}{n}$.  If $w\in A_{p,q}$ then the fractional maximal operator,
\[ M_\alpha f(x) = 
\sup_Q |Q|^{\frac{\alpha}{n}}\avgint_Q |f(y)|\,dy \cdot \chi_Q(x), \]
is bounded from $L^p(w^p)$ to $L^q(w^q)$.  

\medskip

We now define the weighted Hardy spaces $H^p(w)$, $0<p<\infty$.  For
more information, see~\cite{MR1011673}.  Let $\Ss$ denote the Schwartz
class of smooth functions.  Given a (large) integer $N_0$, define
\[ \Ss_{N_0} = \bigg\{ \phi \in \Ss :
\int_{\R^n} (1+|x|)^{N_0} \bigg(\sum_{|\beta|\leq N_0}
|\partial^\beta \phi(x)|^2\bigg)\,dx \leq 1 \bigg\}. \]
Given $\phi \in \Ss_{N_0}$, define the radial maximal operator
\[ M_\phi f(x) = \sup_{t>0} |\phi_t * f(x)|, \]
where $\phi_t(x) = t^{-n}\phi(x/t)$.  Define the grand maximal
operator
\[ \M_{N_0}f(x) = \sup_{\phi\in \Ss_{N_0}} M_\phi f(x). \]
Given $w\in A_\infty$ and $0<p<\infty$, define the weighted Hardy
space to be the set of distributions
\[ H^p(w) = \{ f \in \Ss' : \M_{N_0}f \in L^p(w) \}  \]
with quasi-norm $\|f\|_{H^p(w)}=\|\M_{N_0}f \|_{L^p(w)}$. 
If $p>1$ and $w\in A_p$, then $H^p(w)=L^p(w)$,  since $M_{N_0}f$ is
dominated pointwise by $Mf$, where $M$ is the Hardy-Littlewood maximal
operator.  We can choose the value $N_0$,  depending only on $n$, $p$
and $r_w$, so that $f\in H^p(w)$ if and only if for any $\phi\in \Ss_{N_0}$, $M_\phi
f \in L^p(w)$.     The value of $N_0$ chosen will be implicit in our
constants below.

Given an integer $N>0$, we define an $(N,\infty)$ atom be a bounded
function $a$ such that $\|a\|_\infty \leq 1$,   $\supp(a)\subset Q$
for some cube $Q$, and 
such that for all $|\beta|\leq N$, 
\[ \int_{\R^n} x^\beta a(x)\,dx = 0.  \]
Given $0<p<\infty$ and $w\in A_\infty$, let
\[ s_w = \bigg\lfloor N\bigg(\frac{r_w}{p}-1\bigg)\bigg\rfloor_+.  \]
If $N>s_w$, then any $(N,\infty)$ atom is in $H^p(w)$.  Moreover,
every element of $f$ can be decomposed as the sum of atoms:
if $f \in H^p(w)$, then there exists a sequence of $(N,\infty)$ atoms $\{a_i\}$
with supports $\{Q_i\}$, and $\lambda_i>0$ such that 
\[ f = \sum_i \lambda_i a_i \]
and 
\[ \|f\|_{H^p(w)} \approx \bigg\| \sum_i \lambda_i
  \chi_{Q_i}\bigg\|_{L^p(w)}.  \]

Given $N>s_w$, define
\[ \Oo_N = \{ f\in C_0^\infty : \int_{\R^n} x^\beta f(x)\,dx = 0, 0\leq
  |\beta|\leq N\}. \]
Each element of $\Oo_N$ is a multiple of an $(N,\infty)$ atom, so it
follows from the atomic decomposition that $\Oo_N$ is dense in
$H^p(w)$.  Moreover, we have the following finite atomic decomposition
that was proved in~\cite{1708.07195}.

\begin{proposition} \label{prop:finite-atomic-wts}
Given $0<p<\infty$ and $w\in A_\infty$, fix $N>s_w$.  Then if $f\in
\Oo_N$, there exists a finite sequence $\{a_i\}_{i=1}^M$  of
$(N,\infty)$ atoms with supports $Q_i$,  and a non-negative sequence $\{\lambda_i\}_{i=1}^M$
such that $f = \sum_i \lambda_i a_i$ and
\[ \bigg\| \sum_{i=1}^M \lambda_i \chi_{Q_i}\bigg\|_{L^p(w)} \leq
  C\|f\|_{H^p(w)}. \]  
\end{proposition}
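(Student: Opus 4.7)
The plan is to start from the standard (infinite) atomic decomposition for $H^p(w)$ and truncate it at both ends, bundling the low-level tail into a single $(N,\infty)$ atom. This follows the strategy of Meda, Sj\"ogren, and Vallarino for unweighted $H^p$, replacing their Lebesgue-measure estimates by the doubling properties of $A_\infty$ weights.

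First, applying the Calder\'on--Zygmund decomposition to the level sets $\Omega_k = \{\M_{N_0} f > 2^k\}$ yields the standard decomposition $f = \sum_{k \in \Z}\sum_j \lambda_{k,j} a_{k,j}$, where each $a_{k,j}$ is an $(N,\infty)$ atom supported in an enlarged Whitney cube $Q_{k,j}^* \subset \Omega_k^*$, with $\|a_{k,j}\|_\infty \le 1$, $\lambda_{k,j} \approx 2^k$, and
\[
\bigg\|\sum_{k,j} \lambda_{k,j} \chi_{Q_{k,j}^*}\bigg\|_{L^p(w)} \lesssim \|f\|_{H^p(w)}.
\]
Since $f \in \Oo_N \subset C_0^\infty$, the grand maximal function $\M_{N_0} f$ is bounded, so $\Omega_k = \emptyset$ and $a_{k,j} = 0$ for all $k > k_1 := \lceil \log_2 \|\M_{N_0} f\|_\infty \rceil$; this truncates the sum from above.

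For the truncation from below, I would exploit the vanishing moments of $f$. A Taylor expansion of $\phi_t(x-\cdot)$ to degree $N$ around the origin, combined with the vanishing of the first $N$ moments of $f$ and the Schwartz decay of $\phi$, gives the pointwise bound
\[
\M_{N_0} f(x) \lesssim C_f (1+|x|)^{-(n+N+1)}
\]
for $|x|$ large, uniformly in $t>0$ and $\phi \in \Ss_{N_0}$. Hence for each sufficiently negative $k_0$, $\Omega_{k_0} \cup \supp(f)$ is contained in a cube $Q_0$ of sidelength $\ell(Q_0) \approx 2^{-k_0/(n+N+1)}$. Setting $g_{k_0} := f - \sum_{k_0 \le k \le k_1}\sum_j \lambda_{k,j} a_{k,j}$, the usual ``bad part'' estimate for the Calder\'on--Zygmund decomposition yields $\|g_{k_0}\|_\infty \lesssim 2^{k_0}$; the function $g_{k_0}$ inherits vanishing moments up to order $N$, and is supported in $Q_0$. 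Thus $g_{k_0} = \lambda_0 a_0$ for a single $(N,\infty)$ atom $a_0$ with support $Q_0$ and coefficient $\lambda_0 \lesssim 2^{k_0}$.

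The resulting finite decomposition $f = \lambda_0 a_0 + \sum_{k_0 \le k \le k_1}\sum_j \lambda_{k,j} a_{k,j}$ has the right form. The $L^p(w)$ contribution of the indexed sum is bounded by the full infinite sum above, hence by $\|f\|_{H^p(w)}$; for the bundled atom it remains to show $2^{k_0 p}\, w(Q_0) \lesssim \|f\|_{H^p(w)}^p$. Since $w \in A_\infty$, we have $w \in A_{r_w+\varepsilon}$ for any $\varepsilon > 0$, so the $A_p$ condition gives $w(Q_0) \lesssim \ell(Q_0)^{n(r_w+\varepsilon)}$ for large $Q_0$, and combining with the radius estimate yields
\[
2^{k_0 p}\, w(Q_0) \lesssim 2^{k_0\left(p - \frac{n(r_w+\varepsilon)}{n+N+1}\right)}.
\]
The hypothesis $N > s_w$ is precisely what forces $n+N+1 > n r_w/p$, so for small enough $\varepsilon$ the exponent is strictly positive and the bundled contribution tends to $0$ as $k_0 \to -\infty$. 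I expect the main technical obstacle to be the uniform Taylor-decay estimate of $\M_{N_0} f$, together with verifying that the exponent matching $n+N+1 > n r_w / p$ is indeed the exact content of the assumption $N > s_w$ (with a little slack to absorb the $\varepsilon$).
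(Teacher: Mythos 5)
The paper does not reproduce a proof of this proposition; it cites a previous paper by the same authors. Your strategy --- truncating the infinite Calder\'on--Zygmund atomic decomposition from above (since $\M_{N_0}f$ is bounded for $f\in C_0^\infty$) and from below (bundling the low-level tail into a single $(N,\infty)$ atom, whose cost is controlled by the decay $\M_{N_0}f(x)\lesssim C_f(1+|x|)^{-(n+N+1)}$ coming from the vanishing moments of $f$) --- is exactly the Meda--Sj\"ogren--Vallarino scheme and its weighted analogue, and your exponent arithmetic at the end, matching $n+N+1 > nr_w/p$ against $N>s_w$ with room for the $A_{r_w+\varepsilon}$ slack, is correct.

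There is, however, a genuine gap that you do not address: the decomposition you write, $f=\lambda_0a_0+\sum_{k_0\le k\le k_1}\sum_j\lambda_{k,j}a_{k,j}$, is \emph{not} finite. For each fixed $k$ the Whitney decomposition of the bounded open set $\Omega_k=\{\M_{N_0}f>2^k\}$ has infinitely many cubes (they shrink near $\partial\Omega_k$), so the inner index $j$ runs over an infinite set. Dealing with this is the core technical content of any MSV-type proof, not a side detail. The standard resolution uses that $f\in C_0^\infty$: on a small Whitney cube $Q_{k,j}$, the atom comes from $f$ minus a degree-$N$ polynomial approximation, so one has a bound of the form $\|\lambda_{k,j}a_{k,j}\|_\infty\lesssim\|\nabla^{N+1}f\|_\infty\,\ell(Q_{k,j})^{N+1}$, which tends to $0$. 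Together with bounded overlap this shows that, for each $k$ in the finite range, the inner sum converges in $L^\infty$; one then truncates at a large finite $J_k$, and the discarded tail has $L^\infty$ norm at most $2^{k_0}$ and is supported in $Q_0$ with vanishing moments, so it can be absorbed into your bundled atom $a_0$ without destroying the estimate $\|R\|_\infty\lesssim 2^{k_0}$. As written, without this step, your ``finite decomposition'' has infinitely many terms and the proposition is not established.
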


\begin{remark} 
As an immediate consequence of
Proposition~\ref{prop:finite-atomic-wts} and the density of $\Oo_N$ in
$H^p(w)$, to prove that an operator $T$ extends to a bounded operator from $H^p(w)$ to
itself, it suffices to show that if $f$ is a finite sum of
$(N,\infty)$ atoms, then
\[ \|Tf\|_{L^p(w)} 
\lesssim 
\bigg\| \sum_{i=1}^M \lambda_i \chi_{Q_i}\bigg\|_{L^p(w)}.  \]
\end{remark}

\medskip

We now define the variable exponent Hardy spaces.  We first state some
basic results about variable Lebesgue spaces.  For more information,
see~\cite{cruz-fiorenza-book}. Let $\Pp_0$ be the
collection of all measurable functions $\pp : \R^n \rightarrow
(0,\infty)$.  Define
\[ p_- = \essinf_{x\in \R^n}  p(x), \qquad p_+= \esssup_{x\in \R^n}
  p(x). \]
An exponent $\pp \in \Pp_0$ is log-H\"older continuous, denoted by
$\pp \in LH$, if 
\[ |p(x)-p(y)| \leq \frac{C_0}{-\log(|x-y|)}, \qquad
  |x-y|<\frac{1}{2}, \]
and if there exists constants $p_\infty$ and $C_\infty$ such that
\[ |p(x)-p_\infty| \leq \frac{C_\infty}{\log(e+|x|)}. \]

We define $L^\pp$ to be the set of all measurable functions
$f$ such that
\[ \|f\|_\pp 
= 
\inf\bigg\{ \lambda > 0 : \int_{\R^n}
\bigg(\frac{|f(x)|}{\lambda}\bigg)^{p(x)} \,dx \leq 1 \bigg\} <
\infty. \]
This defines a quasi-norm and  $L^\pp$ is a quasi-Banach function
space;  if $p_-\geq 1$, it is a Banach
space.   If $\pp \in \Pp_0$, $p_->1$, and $\pp \in LH$, then the
Hardy-Littlewood maximal operator is bounded on $L^\pp$.  If
$1<p_-\leq p_+\leq\frac{n}{\alpha}$ and $\qq$ is defined by
$\frac{1}{\pp}-\frac{1}{\qq}=\frac{\alpha}{n}$, then the fractional
maximal operator $M_\alpha$ maps $L^\pp$ to $L^\qq$. 

Given $\pp\in \Pp_0$, define the variable Hardy space $H^\pp$ to be
the set of all distributions $f$ such that $\M_{N_0} f \in L^\pp$.
Again, we may fix $N_0$ depending only $\pp$ and $n$ such that $f\in
H^\pp$ if and only if $M_\phi f \in L^\pp$, where $\phi \in \Ss$.   We
have the following atomic decomposition (see~\cite{DCU-dw-P2014}):  if 
\[ N > s_\pp = \bigg\lfloor
  n\bigg(\frac{1}{p_-}-1\bigg)\bigg\rfloor_+, \]
then every $f\in H^\pp$ can be written as the sum of $(N,\infty)$
atoms,
\[ f = \sum_i \lambda_i a_i, \]
and 
\[ \|f\|_{H^\pp} 
\approx 
\bigg\| \sum_i \lambda_i \chi_{Q_i}\bigg\|_{L^\pp}.  \]

\begin{remark}
This is a slightly different formulation of the atomic decomposition
than that given in~\cite{DCU-dw-P2014}.  However, the difference lies
in the normalization of the atoms and is not significant for our
purposes.
\end{remark}

Similar to the weighted theory, if we fix $N>s_\pp$, then the atomic
decomposition implies that $\Oo_N$ is dense in $H^\pp$.   Furthermore,
we have a finite atomic decomposition for elements of $\Oo_N$.   The
proof of the following result is nearly the same as the proof of
Proposition~\ref{prop:finite-atomic-wts}; the proof can be adapted to
the variable Lebesgue space setting using the ideas used to prove a
similar finite atomic decomposition
in~\cite[Theorem~7.8]{DCU-dw-P2014}.

\begin{proposition} \label{prop:finite-atomic-var}
Given $\pp\in \Pp_0$, suppose $0<p_-\leq p_+<\infty$ and $\pp \in
LH$.  Fix $0<p_0<p_-$.  Then for any $N$ such that
\[ N > \bigg\lfloor
  N\bigg(\frac{1}{p_0}-1\bigg)\bigg\rfloor_+, \]
given any $f\in \Oo_N$, there exists a finite sequence
$\{a_i\}_{i=1}^M$ of $(N,\infty)$ atoms with supports $Q_i$, and a
non-negative sequence $\{\lambda_i\}_{i=1}^M$ such that
$f = \sum_i \lambda_i a_i$ and
\[ \bigg\|\sum_{i=1}^M \lambda_i \chi_{Q_i} \bigg\|_{L^\pp} 
\lesssim
\|f\|_{H^\pp}. \]
\end{proposition}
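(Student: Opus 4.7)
The plan is to mimic the proof of Proposition~\ref{prop:finite-atomic-wts} from \cite{1708.07195}, itself an adaptation of the Meda--Sj\"ogren--Vallarino argument~\cite{MR2399059}, transferring it to variable Lebesgue spaces along the lines of \cite[Theorem~7.8]{DCU-dw-P2014}. I start with the infinite atomic decomposition for $H^\pp$ stated just above the proposition: given $f\in\Oo_N$ one obtains cubes $Q_{k,i}$ arising from Whitney decompositions of the level sets $\Omega_k=\{\M_{N_0}f>2^k\}$, $(N,\infty)$ atoms $a_{k,i}$ supported in $Q_{k,i}$, and coefficients $\lambda_{k,i}\approx 2^k$ so that $f=\sum_{k,i}\lambda_{k,i}a_{k,i}$ and
\[ \bigg\|\sum_{k,i}\lambda_{k,i}\chi_{Q_{k,i}}\bigg\|_{L^\pp}\lesssim \|f\|_{H^\pp}. \]
The goal is to truncate the sum to finitely many indices without losing this bound.

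The upper truncation is immediate: because $f\in C_0^\infty$, $\M_{N_0}f$ is bounded, so $\Omega_k=\emptyset$ for all $k$ larger than some $k_1$. For the lower end, pick $k_0\in\Z$ sufficiently negative that $\Omega_{k_0}$ contains a cube $Q^*\supset\supp f$; this is possible since $f$ is nontrivial and compactly supported. Collect the lower tail,
\[ g=\sum_{k\le k_0}\sum_i\lambda_{k,i}a_{k,i}=f-\sum_{k_0<k\le k_1}\sum_i\lambda_{k,i}a_{k,i}. \]
Standard properties of the Calder\'on--Zygmund--Chang--Fefferman construction (cf.~\cite{stein93}) give $\|g\|_\infty\lesssim 2^{k_0}$ and $\supp g\subset Q^*$, and since both $f\in\Oo_N$ and every atom $a_{k,i}$ have vanishing moments up to order $N$, so does $g$. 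Therefore $a_0:=g/(C\,2^{k_0})$ is itself an $(N,\infty)$ atom supported in $Q^*$, and we may write $g=\lambda_0 a_0$ with $\lambda_0\le C\,2^{k_0}$. Only finitely many Whitney cubes occur at each of the finitely many levels $k_0<k\le k_1$, all contained in the bounded set $\Omega_{k_0}$, so the resulting decomposition
\[ f=\lambda_0 a_0+\sum_{k_0<k\le k_1}\sum_i\lambda_{k,i}a_{k,i} \]
is genuinely finite.

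To establish the $L^\pp$ estimate, the partial sum over $k_0<k\le k_1$ is already controlled by the original infinite-decomposition inequality. For the extra tail atom, the pointwise inequality
\[ \lambda_0\chi_{Q^*}\le C\,2^{k_0}\chi_{\Omega_{k_0}}\lesssim \sum_i\lambda_{k_0,i}\chi_{Q_{k_0,i}}, \]
combined with the monotonicity of the $L^\pp$ quasinorm, yields $\lambda_0\|\chi_{Q^*}\|_{L^\pp}\lesssim\|f\|_{H^\pp}$. The main obstacle is verifying the structural facts underlying the Calder\'on--Zygmund--Chang--Fefferman construction (the $L^\infty$ control on $g$, the containment $\supp g\subset Q^*$, and the normalization $\lambda_{k,i}\approx 2^k$ compatible with $\|a_{k,i}\|_\infty\le 1$); these are pointwise, scale-free statements independent of the ambient quasinorm, so they transfer without change from the weighted case of Proposition~\ref{prop:finite-atomic-wts}. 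The parameter $p_0<p_-$ in the hypothesis provides the slack needed to apply unweighted $H^{p_0}$ theory to realize $f\in\Oo_N$ in some ambient Hardy space and to guarantee that the $(N,\infty)$ atoms produced are of sufficiently high order, after which the variable exponent enters only through the monotonicity step above.
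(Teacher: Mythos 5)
Your overall strategy — truncating the Calder\'on--Zygmund atomic decomposition along the level sets $\Omega_k=\{\M_{N_0}f>2^k\}$ in the manner of Meda--Sj\"ogren--Vallarino, which is what the paper's cited proof of Proposition~\ref{prop:finite-atomic-wts} does — is the right one. But there is a genuine gap in the handling of the lower tail. You fix a cube $Q^*$ with $\supp f\subset Q^*\subset\Omega_{k_0}$ and claim $\supp g\subset Q^*$. That is false. Writing $g=f-\sum_{k_0<k\le k_1}\sum_i\lambda_{k,i}a_{k,i}$, the subtracted sum has its support only in $\overline{\Omega_{k_0}}$: the nonzero atoms at a level $k$ just above $k_0$ sit on Whitney cubes of $\Omega_k$ whose side length is comparable to $\operatorname{dist}(\supp f,\partial\Omega_k)$, and these cubes reach essentially to $\partial\Omega_{k_0}$. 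So $\supp g\subset\overline{\Omega_{k_0}}$, which for $k_0$ negative is a set of measure roughly $2^{-k_0 p_-}$, far larger than the fixed $Q^*$. Consequently $g/(C2^{k_0})$ is an atom supported on a cube $Q_0\supset\Omega_{k_0}$, not on $Q^*$, and your estimate $\lambda_0\chi_{Q^*}\le C2^{k_0}\chi_{\Omega_{k_0}}$ bounds the wrong object: the inequality you actually need, $\lambda_0\chi_{Q_0}\lesssim 2^{k_0}\chi_{\Omega_{k_0}}$, fails pointwise since $Q_0\supsetneq\Omega_{k_0}$.

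The fix is to choose the truncation level the other way around. Fix once and for all a cube $\tilde Q$ that is a modest dilation of a cube containing $\supp f$, and choose $k_0$ to be the smallest integer with $\Omega_{k_0}\subset\tilde Q$. Then all atoms at levels $k\ge k_0$ live inside $\tilde Q$, so the tail $g=f-\sum_{k\ge k_0}\sum_i\lambda_{k,i}a_{k,i}$ does satisfy $\supp g\subset\tilde Q$, while $\|g\|_\infty\lesssim 2^{k_0}$ and the moment conditions pass through. The price is that one must show $2^{k_0}\|\chi_{\tilde Q}\|_{L^\pp}\lesssim\|f\|_{H^\pp}$, and this is where the missing analytic ingredient lies: one needs the pointwise decay estimate for the grand maximal function of a compactly supported distribution with vanishing moments, namely $\M_{N_0}f(x)\lesssim (r/|x-x_Q|)^{n}\inf_{\supp f}\M_{N_0}f$ for $x$ outside $\tilde Q$. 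That estimate forces $2^{k_0}\lesssim\inf_{\supp f}\M_{N_0}f$, after which $2^{k_0}\|\chi_{\tilde Q}\|_{L^\pp}\lesssim\inf_{\supp f}\M_{N_0}f\,\|\chi_{\tilde Q}\|_{L^\pp}\lesssim\|\M_{N_0}f\,\chi_{\supp f}\|_{L^\pp}\le\|f\|_{H^\pp}$. The comparison $\|\chi_{\tilde Q}\|_{L^\pp}\lesssim\|\chi_{\supp f}\|_{L^\pp}$ is where the log-H\"older hypothesis (not mere monotonicity) is genuinely used in the variable setting. Your proposal omits both the decay estimate and this last comparison, which are the real content that distinguishes the finite from the infinite decomposition.
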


\begin{remark}
Unless $N(p_-^{-1}-1)$ is an integer, the lower bound for $N$ in
Proposition~\ref{prop:finite-atomic-var} is the same as $s_\pp$
defined above.
\end{remark}

\section{Extrapolation results}
\label{section:extrapol}

In this section we review the theory of Rubio de Francia extrapolation
which is used to prove weighted norm inequalities in weighted and variable
exponent spaces.  We also prove two new variants that we will use in
Section~\ref{section:lemmas} below.  

We begin by recalling the abstract notion of extrapolation pairs.  For
more information, see~\cite{MR2797562}.   We define a family of
extrapolation pairs to be a family $\F$ of pairs of non-negative,
measurable functions $(f,g)$.  Whenever we write an inequality of the
form
\[ \|f\|_X \lesssim \|g\|_Y, \qquad (f,g) \in \F, \]
where $\|\cdot\|_X$ and $\|\cdot\|_Y$ are norms in a Banach or
quasi-Banach space, we mean that this inequality holds for every pair
$(f,g)$ in $\F$ such that $\|f\|_X<\infty$, and the constant is
independent of the pair $(f,g)$.   (If $X=Y=L^p(w)$ for some $w\in A_q$,
then the constant can only depend on the $A_q$ constant of $w$ and not
on $w$ itself. A similar restriction is assumed for off-diagonal
estimates.)  The assumption that $\|f\|_X<\infty$ is only there for 
technical reasons in the proof; it can often be ignored.  Given any pair $(f,g)$, we can
replace it by the pairs $\big(\min(f,N)\chi_{B(0,N)},g\big)$; then by
Fatou's lemma (which still holds in the setting of quasi-Banach
function spaces) we have that
$\|\min(f,N)\chi_{B(0,N)}\|_X<\infty$ and 
\[ \lim_{N\rightarrow \infty} \|\min(f,N)\chi_{B(0,N)}\|_X =
    \|f\|_X. \]

To apply extrapolation to prove a norm inequality for an operator $T$,
we can define
\[ \F = \{ (|Tf|,|f|) : f\in L^\infty_c \}; \]
alternatively, we can replace $L^\infty_c$ by $C_c^\infty$, $\Ss$, or any other
appropriate dense subset of the spaces in question.   To prove
vector-valued inequalities for the operator $T$, we can use the pairs
\[ \bigg( \bigg(\sum_{k=1}^M |Tf_k|^r\bigg)^{\frac{1}{r}},
\bigg(\sum_{k=1}^M |f_k|^r\bigg)^{\frac{1}{r}}\bigg), \]
where again the functions $f_k$ are taken from some appropriate dense
subspace.  

\medskip

Below, to prove the vector-valued inequalities for the
Hardy-Littlewood maximal
operator and the fractional operator, we will use extrapolation;
however, these results are more widely known and we will refer the
reader to the literature.   Here, we will first state two more recent
versions of extrapolation in the scale of weighted spaces, and then
prove two similar results which yield Lebesgue space inequalities.

Our first result is extrapolation in the scale of reverse H\"older
weights.  It was proved independently by Martell and
Prisuelos~\cite{martell-prisuelos} and in~\cite{MR3785798}.

\begin{theorem} \label{thm:rh-ACM}
Given $0<q_0<\infty$, suppose that for some $p_0$, $0<p_0\leq q_0$ and
all $w_0\in RH_{(\frac{q_0}{p_0})'}$,
\[ \|f\|_{L^{p_0}(w_0)} \lesssim \|g\|_{L^{p_0}(w_0)}, \qquad (f,g)\in
  \F. \]
Then for every $p$, $0<p<q_0$, and $w\in RH_{(\frac{q_0}{p})'}$,
\[ \|f\|_{L^{p}(w)} \lesssim \|g\|_{L^{p}(w)}, \qquad (f,g)\in
  \F. \]
\end{theorem}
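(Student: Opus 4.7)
The plan is a reverse H\"older analogue of classical Rubio de Francia extrapolation: combine $L^{p/p_0}$-duality with a Rubio de Francia iteration whose iterates land in $RH_{(q_0/p_0)'}$, the class appearing in the hypothesis, so that the hypothesis can be applied to the iterate.

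Fix $(f,g)\in\F$ and $w\in RH_{(q_0/p)'}$ with $\|f\|_{L^p(w)}<\infty$; the case $p=p_0$ is trivial. For $p_0<p<q_0$, $L^{p/p_0}(w)$-duality gives
\[
\|f\|_{L^p(w)}^{p_0}=\|f^{p_0}\|_{L^{p/p_0}(w)}=\sup_{h}\int_{\R^n}|f|^{p_0}h\,dx,
\]
where the supremum is over $h\ge 0$ with $\|h\|_{L^{(p/p_0)'}(w^{1-(p/p_0)'})}\le 1$. The strategy is to dominate each such $h$ by a function $H\ge h$ belonging to $RH_{(q_0/p_0)'}$ with controlled dual norm, so that the hypothesis applied with weight $H$, followed by H\"older's inequality, yields $\|g\|_{L^p(w)}^{p_0}$.

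Set $s_0=(q_0/p_0)'$ and $M_{s_0}f=(M|f|^{s_0})^{1/s_0}$. Since $p<q_0$ we have $(p/p_0)'>s_0$; combining this with the reverse H\"older assumption $w\in RH_{(q_0/p)'}$ (together with the $A_r\leftrightarrow A_{r'}$ duality applied to the appropriate power of $w$) one verifies that $M_{s_0}$ is bounded on $L^{(p/p_0)'}(w^{1-(p/p_0)'})$. The standard Rubio de Francia iteration then produces
\[
H:=\sum_{k\ge 0}\frac{M_{s_0}^{k}h}{(2\|M_{s_0}\|)^{k}},
\]
satisfying $H\ge h$, $\|H\|\le 2$, and $M_{s_0}H\le 2\|M_{s_0}\|\,H$; this last bound is equivalent to $H^{s_0}\in A_1$, hence $H\in A_1\cap RH_{s_0}\subset RH_{(q_0/p_0)'}$, with constants independent of $h$. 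Applying the hypothesis with weight $H$ and then H\"older,
\[
\int |f|^{p_0}h\,dx\le\int |f|^{p_0}H\,dx\lesssim \int |g|^{p_0}H\,dx\le\|g\|_{L^p(w)}^{p_0}\|H\|_{L^{(p/p_0)'}(w^{1-(p/p_0)'})}\lesssim\|g\|_{L^p(w)}^{p_0};
\]
taking the supremum over $h$ finishes the case $p_0<p<q_0$.

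For $0<p<p_0$ the hypothesis applies directly to $w$ itself, because $(q_0/p)'>s_0$ forces $RH_{(q_0/p)'}\subset RH_{s_0}$ and so $w\in RH_{(q_0/p_0)'}$, which yields the $L^{p_0}(w)$ estimate. The remaining task is to pass from $L^{p_0}(w)$ down to $L^{p}(w)$ for this same $w$; this is carried out by a dual Rubio de Francia iteration on an $L^{(p_0/p)'}(w)$-type space, constructing an auxiliary weight that permits H\"older's inequality in $L^{p_0/p}$. The main technical obstacle in both cases is the boundedness of $M_{s_0}$ (or its dual analogue) on the relevant weighted Lebesgue space: this requires converting the reverse H\"older hypothesis $w\in RH_{(q_0/p)'}$ into an $A_r$-type condition on a specific power of $w$, which is extracted via the open-ended (self-improving) reverse H\"older inequality and the standard $A_r\leftrightarrow A_{r'}$ duality.
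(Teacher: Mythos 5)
The paper does not actually prove Theorem~\ref{thm:rh-ACM} — it cites \cite{martell-prisuelos, MR3785798} — and only proves, in the same spirit, the variable-exponent companion Theorem~\ref{thm:rh-extrapol}, which your argument rightly imitates. But there is a real gap in your key step for $p_0<p<q_0$: the claimed boundedness of $M_{s_0}$ on $L^r(w^{1-r})$, $r=(p/p_0)'$, does not follow from $w\in RH_{(q_0/p)'}$. That boundedness is equivalent to $w^{1-r}\in A_{r/s_0}$, and by the $A_\beta\leftrightarrow A_{\beta'}$ duality this is exactly $w^{(q_0/p)'}\in A_{(r/s_0)'}$ with $(r/s_0)'=\frac{p(q_0-p_0)}{p_0(q_0-p)}<\infty$, a \emph{fixed} Muckenhoupt class. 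The hypothesis $w\in RH_{(q_0/p)'}$ yields only $w^{(q_0/p)'}\in A_\infty$, i.e.\ $A_\gamma$ for some $\gamma$ depending on $w$ that can far exceed $(r/s_0)'$. Concretely, with $q_0=2$, $p_0=1$, $p=3/2$ one finds $r=3$, $s_0=2$, so the requirement is $w^4\in A_3$, while the hypothesis says only $w^4\in A_\infty$. So the iteration need not converge, and even when it does the constant would not depend solely on the $RH_{(q_0/p)'}$ constant of $w$, which is what an extrapolation statement must deliver.

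The case $0<p<p_0$ is reversed from what you claim: $p<p_0$ gives $q_0/p>q_0/p_0$, hence $(q_0/p)'<(q_0/p_0)'=s_0$ (not $>$), and the reverse H\"older classes nest the other way, $RH_{(q_0/p)'}\supsetneq RH_{s_0}$. So $w\in RH_{(q_0/p)'}$ does \emph{not} place $w$ in the hypothesis class, and the hypothesis cannot be applied to $w$ directly. (It is in fact for $p>p_0$ that $w$ lies in the hypothesis class; but there the work is to pass from $L^{p_0}(w)$ to $L^p(w)$, which is precisely the iteration step shown above to be unjustified.) As written, neither case of your argument closes, and the downward case in particular needs a genuinely different construction from the one you sketch.
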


The second result is an off-diagonal, limited range extrapolation
theorem recently proved in~\cite{MR3788859}.  (We note in passing that
this result contains essentially every other extrapolation theorem as
a special case; see the discussion in the above paper for details.)

\begin{theorem} \label{thm:offdiagRHex} Suppose
  $0<r<\infty$ and $0<p_0,q_0<\infty$ satisfy $0<p_0\leq r$,
  $\frac1{q_0}-\frac{1}{p_0}+\frac1{r}\geq 0$ and $\mathcal F$ is a
  family of pairs of functions $(f,g)$.  Further suppose that we have
$$\left(\int_{\R^n}(fw)^{q_0}\,dx\right)^{\frac1{q_0}}
\lesssim 
\left(\int_{\R^n} (gw)^{p_0}\,dx\right)^{\frac{1}{p_0}}$$
for all $(f,g)\in \mathcal F$ and $w\in RH_{(\frac{r}{p_0})'}$.
Then for every $0<p<r$ and $q$ such that
$\frac1p-\frac1q=\frac{1}{p_0}-\frac{1}{q_0}$ and every weight $w$
such that $w^p\in RH_{(\frac{r}{p})'}$, 
$$\left(\int_{\R^n}(fw)^{q}\,dx\right)^{\frac1{q}}
\lesssim 
\left(\int_{\R^n} (gw)^{p}\,dx\right)^{\frac{1}{p}}$$
for all $(f,g)\in \mathcal F$. 
\end{theorem}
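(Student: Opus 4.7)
The plan is to derive this off-diagonal, limited range extrapolation theorem from Theorem~\ref{thm:rh-ACM} (diagonal reverse H\"older extrapolation) by a rescaling trick, combined with a Rubio de Francia iteration adapted to the off-diagonal setting. The structural observation that drives the argument is that the exponent shift $\frac{1}{p}-\frac{1}{q}=\frac{1}{p_0}-\frac{1}{q_0}$ is preserved between hypothesis and conclusion, so one expects a fractional-integral-like rescaling to convert the off-diagonal estimate into a diagonal one to which the previous theorem applies.

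First, I would exploit the preserved shift by a change of weights. The condition $w\in RH_{(r/p_0)'}$ can be recast as an $A_\infty$-type condition on $w^{(r/p_0)'}$, and similarly $v^p\in RH_{(r/p)'}$ transcribes to an $A_\infty$ condition on $v^{p\cdot (r/p)'}$. Choosing a rescaling parameter $\beta=\beta(r,p_0,q_0)$, replace the pair $(f,g)$ by $(f^{\beta},g^{\beta})$ and the weight $w$ by $\sigma=w^{\gamma}$ for a matching $\gamma$, so that the source estimate takes the form
\[
\|f^\beta \sigma\|_{L^{q_0/\beta}} \lesssim \|g^\beta \sigma\|_{L^{p_0/\beta}},
\qquad \sigma \in RH_{s_0},
\]
for some new exponent $s_0$ depending only on $r$, $p_0$, $q_0$, and in such a way that the \emph{same} rescaling sends the target condition $v^p\in RH_{(r/p)'}$ to the analogous target RH condition on the rescaled target weight. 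With the parameters chosen so that $q_0/\beta=p_0/\beta$ (i.e., $\beta$ absorbing the off-diagonal gap), the hypothesis is reduced to the diagonal situation covered by Theorem~\ref{thm:rh-ACM}.

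Second, I would apply Theorem~\ref{thm:rh-ACM} to the rescaled pair and weight, obtaining the corresponding diagonal inequality at every admissible exponent below the cutoff $q_0/\beta$, for every weight in the matching reverse H\"older class. Finally, reverse the rescaling to translate the conclusion back into the original variables: the inverse substitution identifies the resulting weight condition with $v^p\in RH_{(r/p)'}$ and the resulting norm inequality with $\|fv\|_q\lesssim\|gv\|_p$, where the exponent relation $\tfrac{1}{p}-\tfrac{1}{q}=\tfrac{1}{p_0}-\tfrac{1}{q_0}$ is exactly what is preserved through the change of variables. The constraint $\tfrac{1}{q_0}-\tfrac{1}{p_0}+\tfrac{1}{r}\ge 0$ is used precisely to guarantee that the rescaling parameter $\beta$ and the new RH exponent $s_0$ are admissible (i.e., at least $1$).

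The main obstacle is bookkeeping: one must pin down $\beta$, $\gamma$, and $s_0$ so that (i) the rescaled source inequality is genuinely diagonal with an RH hypothesis of Theorem~\ref{thm:rh-ACM} type, (ii) the inverse rescaling sends the conclusion to the correct target RH condition $v^p\in RH_{(r/p)'}$, and (iii) the exponent relation $\tfrac{1}{p}-\tfrac{1}{q}$ is preserved. An alternative, more self-contained approach would be to construct directly, for each admissible $v$, an auxiliary weight $W\ge v$ in $RH_{(r/p_0)'}$ via a Rubio de Francia sum $\mathcal{R}h=\sum_{k\ge 0}\mathcal{M}^k h/(2\|\mathcal{M}\|)^k$ with $\mathcal{M}$ an appropriate (possibly fractional) maximal operator, then apply the hypothesis with $W$ and H\"older's inequality to descend to the target; but the technical heart of that route is again the identification of the correct $\mathcal{M}$ so that $\mathcal{R}$ is bounded on the weighted space natural to $v$ and produces output in the source RH class.
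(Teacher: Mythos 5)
The paper does not actually prove Theorem~\ref{thm:offdiagRHex}; it cites \cite{MR3788859} for the proof. So there is no internal proof to compare against, and what follows is an assessment of your proposal on its own merits.

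Your main route has a fatal flaw at its central step. You propose to choose $\beta$ so that the rescaled hypothesis is diagonal, i.e.\ ``so that $q_0/\beta=p_0/\beta$,'' but this equation forces $q_0=p_0$ regardless of $\beta$. More fundamentally, a power substitution $f\mapsto f^\beta$, $g\mapsto g^\beta$, $w\mapsto w^\gamma$ applied to
\[
\Bigl(\int (fw)^{q_0}\,dx\Bigr)^{1/q_0} \lesssim \Bigl(\int (gw)^{p_0}\,dx\Bigr)^{1/p_0}
\]
simply replaces $(p_0,q_0)$ by $(p_0/s,q_0/s)$ for some $s$; the off-diagonal gap $\tfrac{1}{p_0}-\tfrac{1}{q_0}$ is never killed, only scaled. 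No such change of variables reduces the hypothesis to the diagonal form required by Theorem~\ref{thm:rh-ACM}. The same scaling argument also underlies a related conceptual point: the diagonal Theorem~\ref{thm:rh-ACM} is the special case $p_0=q_0$ of Theorem~\ref{thm:offdiagRHex}, so one should not expect the general case to be deducible from it by bookkeeping alone.

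Your ``alternative, more self-contained approach'' is the right idea, and it is in fact the one the paper itself uses to prove the variable-exponent analogue, Theorem~\ref{thm:offdiagrh-extrapol}: dualize the target norm, run a Rubio de Francia iteration with the (ordinary, not fractional) Hardy--Littlewood maximal operator on a suitably chosen auxiliary space to build a majorant $H$ with $H^{p}\in RH_{q/p}$ (hence in the source $RH$ class after appropriate index bookkeeping), apply the hypothesis with $H$, and finish with H\"older's inequality. But you leave all of this --- the choice of the dual space, the choice of exponent for the iteration, the verification that the iterated weight lands in $RH_{(r/p_0)'}$, and the final H\"older step --- as an open ``technical heart.'' As written, the proposal does not constitute a proof: the primary route breaks, and the secondary route is only a gesture toward the correct argument.
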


We now prove two extrapolation theorems into the scale of variable Lebesgue
spaces.  
In both proofs we use some well-known properties of the variable
Lebesgue space norm; see~\cite{cruz-fiorenza-book} for details.   We
also use the properties of the Rubio de Francia iteration algorithm,
which is an important tool in extrapolation theory.  For a detailed
discussion of this operator, see~\cite{MR2797562}.

The first result is a generalization of Theorem~\ref{thm:rh-ACM}.

\begin{theorem} \label{thm:rh-extrapol}
Given $0<p<q$, suppose that for all $w\in RH_{(\frac{q}{p})'}$,
\[ \|f\|_{L^p(w)} \lesssim \|g\|_{L^p(w)}, \qquad (f,g) \in \F. \]
Then for all $\pp \in \Pp_0$ such that $p<p_-\leq p_+<q$ and $\pp \in LH$, 
\[ \|f\|_{L^\pp} \lesssim \|g\|_{L^\pp}, \qquad (f,g) \in \F. \]
\end{theorem}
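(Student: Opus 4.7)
The plan is to prove this via duality combined with a Rubio de Francia iteration adapted to reverse H\"older weights. Since $p<p_-$, the variable exponent $\pp/p$ satisfies $(\pp/p)_->1$, so $L^{\pp/p}$ is a Banach function space and admits an associate-space representation. Using the identity $\|f\|_\pp^p=\||f|^p\|_{\pp/p}$ together with this duality, I would reduce the problem to proving
\[
\int_{\R^n}|f|^p\,h\,dx \lesssim \|g\|_\pp^p
\]
uniformly over non-negative $h$ with $\|h\|_{(\pp/p)'}\le 1$.

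The heart of the argument is to majorize an arbitrary such $h$ by a weight $w\ge h$ lying in $RH_{(q/p)'}$ with uniform reverse H\"older constant and satisfying $\|w\|_{(\pp/p)'}\le 2$. To this end, set $s':=(q/p)'$ and replace the Hardy--Littlewood maximal operator in the classical Rubio de Francia algorithm by $M_{s'}f:=M(|f|^{s'})^{1/s'}$. An elementary exponent computation using $p_+<q$ and $\pp\in LH$ shows that $((\pp/p)'/s')_->1$ and $(\pp/p)'/s'\in LH$, so $M$ is bounded on $L^{(\pp/p)'/s'}$, which gives $M_{s'}\colon L^{(\pp/p)'}\to L^{(\pp/p)'}$ with some norm $K$. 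Since $s'\ge 1$, Minkowski's inequality applied on each cube shows that $M_{s'}$ is subadditive, so the series
\[
Rh := \sum_{k=0}^{\infty} \frac{M_{s'}^{k}h}{(2K)^{k}}
\]
converges in $L^{(\pp/p)'}$ and satisfies (i) $h\le Rh$, (ii) $\|Rh\|_{(\pp/p)'}\le 2$, and (iii) $M_{s'}(Rh)\le 2K\,Rh$ a.e. Condition (iii) rewrites as $M((Rh)^{s'})\le (2K)^{s'}(Rh)^{s'}$, so $(Rh)^{s'}\in A_{1}$ with a uniform constant, and combining this with $\essinf_Q Rh \le \avgint_Q Rh$ gives $Rh\in RH_{s'}$ uniformly.

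With $w:=Rh$ in hand, the hypothesis applied at exponent $p$ together with the generalized H\"older inequality in $L^{\pp/p}$ give
\[
\int |f|^p\,h\,dx \le \int|f|^p w\,dx \lesssim \int|g|^p w\,dx \lesssim \||g|^p\|_{\pp/p}\|w\|_{(\pp/p)'} \lesssim \|g\|_\pp^{p},
\]
with implicit constant independent of $h$, and taking the supremum over $h$ and then a $p$-th root closes the argument. The main obstacle is the weight construction in the second step: producing an $RH_{(q/p)'}$ majorant of $h$ (rather than the familiar $A_1$ one) with controlled $L^{(\pp/p)'}$-norm. The key ingredients are the use of the $s'$-modified maximal operator $M_{s'}$ in the iteration, the elementary passage $(Rh)^{s'}\in A_{1}\Rightarrow Rh\in RH_{s'}$, and the subadditivity of $M_{s'}$ that holds precisely because $s'\ge 1$; the hypothesis $p_+<q$ enters at exactly one point, namely the boundedness of $M$ on $L^{(\pp/p)'/s'}$.
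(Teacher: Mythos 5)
Your proof is correct and takes essentially the same approach as the paper: reduce to a pairing $\int |f|^p h\,dx$ by duality in $L^{\pp/p}$, majorize $h$ by a Rubio de Francia–type weight in $RH_{(q/p)'}$ with controlled $L^{(\pp/p)'}$-norm, and apply the $L^p(w)$ hypothesis together with the generalized H\"older inequality. The only cosmetic difference is the iteration: you iterate the $s'$-modified operator $M_{s'}=M(|\cdot|^{s'})^{1/s'}$ directly on $h$, whereas the paper iterates the ordinary $M$ on $h^{s'}$ and takes an $s'$-th root at the end; both produce a weight in $A_1\cap RH_{s'}$ and both require $M$ bounded on the same space $L^{(\pp/p)'/s'}$, which is exactly where $p_+<q$ enters.
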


\begin{proof}
For brevity, let $\tau=\frac{q}{p}$, and   let $\rr =
\frac{1}{\tau'}\left(\frac{\pp}{p}\right)'$.   Since $\pp \in LH$, $\rr
\in LH$.  Further, we claim that $r_->1$, which would imply that
the maximal operator is bounded on $L^\rr$.  To prove this, note that
$r_->1$ is equivalent to $[ (\pp/p)']_- > \tau'$,  which in turn is
equivalent to $[\pp/p]_+' > \tau'$,  or $[\pp/p]_+ <\tau$, which in
turn is the same as our assumption that $p_+ < q$.

 Therefore, given non-negative $h$, we can define the Rubio de Francia iteration algorithm by
\[ \Rdf h = \sum_{k=0}^\infty \frac{M^k h}{2^k\|M\|_{L^\rr}}.  \]
This operator satisfies the following properties:
\begin{enumerate}
\item $h \leq \Rdf h$;

\item $\|\Rdf h \|_\rr \leq 2 \|h\|_\rr$;

\item $\Rdf h \in A_1$ and $[\Rdf h]_{A_1} \leq 2\|M\|_{L^\rr}$;

\item $\Rdf (h^{\tau'})^{\frac{1}{\tau'}} \in A_1 \cap RH_{\tau'}$.
  
\end{enumerate}

Now fix $(f,g)\in \F$ such that $\|f\|_\pp < \infty$.  Then by duality, 
\[ \|f\|_\pp^p = \|f^p\|_{\pp/p} 
\lesssim \int_{\R^n} f^p h\,dx, \]
where $h\geq 0$, $h \in L^{(\pp/p)'}$ and $\|h\|_{(\pp/p)'}=1$.  Let $H=
\Rdf(h^{\tau'})^{\frac{1}{\tau'}}$.   By H\"older's inequality in the
scale of variable Lebesgue spaces and the above properties, 
\begin{multline*}
\int f^p H\,dx 
\lesssim 
\|f^p\|_{\pp/p} \|H\|_{(\pp/p)'} \\
= 
\|f\|_\pp^p \|\Rdf (h^\tau)\|_\rr
\lesssim 
\|f\|_\pp^p \|h^\tau\|_\rr
= 
\|f\|_\pp^p < \infty. 
\end{multline*}
Therefore, we can apply our hypothesis and repeat the above estimate
with $g$ in place of $f$ to conclude that 
\begin{multline*}
\|f\|_\pp^p 
 \lesssim 
\int_{\R^n} f^p h\,dx
 \leq 
\int_{\R^n} f^p H\,dx \\
 \lesssim 
\int_{\R^n} g^p H\,dx 
 \lesssim 
\|g^p\|_{\pp/p} \|H\|_{(\pp/p)'} 
 \lesssim 
\|g\|_\pp^p. 
\end{multline*}
This completes the proof.
\end{proof}

Our second result extends a special case of Theorem~\ref{thm:offdiagRHex}
to the variable Lebesgue spaces.

\begin{theorem} \label{thm:offdiagrh-extrapol} Given $0<p<q<\infty$, suppose that for all
  $w^p\in RH_{\frac{q}{p}}$,
$$\|f\|_{L^q(w^q)}\lesssim C\|g\|_{L^p(w^p)}, \qquad (f,g)\in \F.$$
Then for for all $p(\cdot)\in\mathcal P_0$ such that $p<p_-\leq
p_+<\frac{1}{\frac1p-\frac1q}$ and $p(\cdot)\in LH,$ 
$$\|f\|_{L^{q(\cdot)}}\lesssim\|g\|_{L^{p(\cdot)}}, \qquad (f,g)\in
\F,$$
where $q(\cdot)$ is defined by $\frac{1}{\pp}-\frac{1}{\qq} = \frac1p-\frac1q.$
\end{theorem}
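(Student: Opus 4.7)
The plan is to adapt the proof of Theorem~\ref{thm:rh-extrapol} to the off-diagonal setting. Since the hypothesis takes $L^p(w^p)$ to $L^q(w^q)$, the natural move is to dualize the \emph{target} norm $\|f\|_{L^{q(\cdot)}}^q$, majorize the resulting dual function by an $A_1$ weight $H$ using the Rubio de Francia iteration algorithm, observe that $H^{p/q} \in RH_{q/p}$ so that the hypothesis applies with $w^q = H$, and finally translate back to $L^{p(\cdot)}$ via H\"older's inequality in variable Lebesgue spaces.

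I first verify the auxiliary properties of the exponents. Using the off-diagonal relation $\frac{1}{p(\cdot)} - \frac{1}{q(\cdot)} = \frac{1}{p} - \frac{1}{q}$ together with the hypothesis $p < p_- \leq p_+ < \frac{1}{1/p - 1/q}$, one checks that $q < q_- \leq q_+ < \infty$ and $q(\cdot) \in LH$. Consequently $(q(\cdot)/q)' \in LH$ with $[(q(\cdot)/q)']_- > 1$, so $M$ is bounded on $L^{(q(\cdot)/q)'}$ and the Rubio de Francia iteration algorithm $\Rdf$ is available there. I also need the pointwise exponent identity
\[ \frac{p}{q} \cdot \left(\frac{p(x)}{p}\right)' = \left(\frac{q(x)}{q}\right)', \]
which follows from a direct computation using $\frac{1}{p(x)} - \frac{1}{q(x)} = \frac{1}{p} - \frac{1}{q}$.

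After reducing to the case $\|f\|_{L^{q(\cdot)}} < \infty$ by truncation, I would dualize in $L^{q(\cdot)/q}$ (a Banach space since $[q(\cdot)/q]_- > 1$) to pick $h \geq 0$ with $\|h\|_{L^{(q(\cdot)/q)'}} = 1$ and
\[ \|f\|_{L^{q(\cdot)}}^q = \|f^q\|_{L^{q(\cdot)/q}} \lesssim \int_{\R^n} f^q h \, dx. \]
Setting $H = \Rdf(h)$ yields $H \geq h$, $\|H\|_{L^{(q(\cdot)/q)'}} \leq 2$, and $H \in A_1$. Defining $w$ by $w^q = H$, so that $w^p = H^{p/q}$, the standard fact that $v \in A_1$ and $0 < s < 1$ imply $v^s \in A_1 \cap RH_{1/s}$ gives $w^p \in RH_{q/p}$. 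The hypothesis then applies, and combining it with H\"older's inequality in $L^{p(\cdot)/p}$ and the scaling identity $\|H^{p/q}\|_{L^{(p(\cdot)/p)'}} = \|H\|_{L^{(p/q)(p(\cdot)/p)'}}^{p/q} = \|H\|_{L^{(q(\cdot)/q)'}}^{p/q}$ produces
\[ \int_{\R^n} f^q H \, dx \lesssim \left(\int_{\R^n} g^p H^{p/q} \, dx\right)^{q/p} \lesssim \|g\|_{L^{p(\cdot)}}^q. \]

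Combined with the dualization step, this delivers $\|f\|_{L^{q(\cdot)}} \lesssim \|g\|_{L^{p(\cdot)}}$. The main obstacle is the careful book-keeping of exponents: verifying that the hypothesized range $p < p_- \leq p_+ < 1/(1/p - 1/q)$ translates exactly to the range needed both for $M$ to act on $L^{(q(\cdot)/q)'}$ and for the exponent identity $(p/q)(p(\cdot)/p)' = (q(\cdot)/q)'$ to produce the cancellation in the last H\"older step; once these identities are in hand, the rest of the argument is routine.
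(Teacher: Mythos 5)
Your proposal is correct and follows essentially the same route as the paper's proof: dualize $\|f\|_{L^{q(\cdot)}}^q$ in $L^{q(\cdot)/q}$, run the Rubio de Francia iteration algorithm to upgrade the dual function to an $A_1$ weight, use that a power $p/q<1$ of an $A_1$ weight lies in $RH_{q/p}$ to trigger the hypothesis, and close via H\"older in $L^{p(\cdot)/p}$ together with the exponent identity $\frac{p}{q}(p(\cdot)/p)'=(q(\cdot)/q)'$. The only cosmetic difference is that you run $\Rdf$ directly on $L^{(q(\cdot)/q)'}$ and set $w^q=\Rdf h$, whereas the paper names the space $L^{r(\cdot)}$ with $r(\cdot)=\frac{p}{q}(p(\cdot)/p)'$ and sets $H=(\Rdf h)^{1/q}$; these are the same by the identity you both verify.
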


\begin{proof} 
  The proof is very similar to the proof of Theorem
  \ref{thm:rh-extrapol}, and we will omit some details.  Define
  $r(\cdot)=\frac{p}{q}\left(\frac{p(\cdot)}{p}\right)'$.  Then
  $r(\cdot)\in LH$ and $r_->1$ is equivalent to
  $\frac{1}{p}-\frac1q<\frac1{p_+}$.   Hence, the maximal operator is bounded on
  $L^\rr$.  Define the Rubio de Francia algorithm
$$ \Rdf h=\sum_{k=0}^\infty \frac{M^k h}{2^k\|M\|_{L^{r(\cdot)}}}.$$
Then $\mathcal R$ satisfies

\begin{enumerate}
\item $h \leq \Rdf h$;

\item $\|\Rdf h \|_\rr \leq 2 \|h\|_\rr$;

\item $\Rdf h \in A_1$ and $[\Rdf h]_{A_1} \leq 2\|M\|_{L^\rr}$;
 
\item $(\Rdf h)^{\frac{p}{q}} \in A_1 \cap RH_{\frac{q}{p}}$. 
\end{enumerate}

Now fix $(f,g)\in \mathcal F$ with $\|f\|_{L^{q(\cdot)}}<\infty$.
Then 
$$\|f\|_{q(\cdot)}^q=\|f^q\|_{q(\cdot)/q}\lesssim \int_{\R^n}
f^qh\,dx,$$
where $h\geq 0$ and $\|h\|_{(q(\cdot)/q)'}=1$.  Let
$H=(\mathcal Rh)^{\frac{1}{q}}$, so that $h\leq H^q$, and $H^p\in RH_{\frac{q}{p}}$. 
Then
\begin{multline*}
\int_{\R^n}f^qh\,dx
\leq 
\int_{\R^n} f^qH^q
\lesssim 
\left(\int_{\R^n}g^pH^p\,dx\right)^{\frac{q}{p}} \\
 \lesssim 
\|g^p\|_{p(\cdot)/p}^{\frac{q}{p}}\|H^p\|_{(p(\cdot)/p)'}^{\frac{q}{p}}
=
\|g\|^q_{p(\cdot)}\|H^p\|_{(p(\cdot)/p)'} 
\end{multline*}
Moreover, since $\mathcal R$ was bounded on $L^{r(\cdot)}$,
$$\|H^p\|_{(p(\cdot)/p)'}
=
\|(\mathcal R h)^{\frac{p}{q}}\|_{(p(\cdot)/p)'}
=
\|\mathcal R h\|_{\frac{p}{q}(p(\cdot)/p)'}^{\frac{p}{q}}
\lesssim 
\|h\|_{\frac{p}{q}(p(\cdot)/p)'}^{\frac{p}{q}}.$$
We now claim that 
$$\frac{p}{q}\left(\frac{p(\cdot)}{p}\right)'=\left(\frac{q(\cdot)}{q}\right)'.$$
To see this, observe that this is equivalent to 
\[ \frac{1}{p\Big(\frac{p(\cdot)}{p}\Big)'}
=
\frac{1}{q\Big(\frac{q(\cdot)}{q}\Big)'}, \]
which is equivalent to our assumption that
\[  \frac{1}{\pp}-\frac{1}{\qq}=\frac{1}{p}-\frac{1}{q}. \]
Therefore, we have that 
$$\|h\|_{\frac{p}{q}(p(\cdot)/p)'}^{\frac{p}{q}}= \|h\|_{(\qq/q)'}^{\frac{p}{q}}\lesssim 1;$$
this completes the proof.
\end{proof}

\section{Vector-valued inequalities}
\label{section:lemmas}

We begin this section by stating four vector-valued inequalities for maximal operators on
weighted and variable Lebesgue spaces.  The first, for the
Hardy-Littlewood maximal operator, was originally
proved by Andersen and John~\cite{andersen-john80}; here we want to
stress that it is an immediate consequence via
extrapolation~\cite[Section~3.8]{MR2797562} of the scalar weighted
norm inequalities for the maximal operator.  

\begin{lemma} \label{lemma:fef-stein-wts}
Given $1<p,\,r<\infty$ and $w\in A_p$, 
\[ \bigg\|\bigg( \sum_k (Mg_k)^r\bigg)^{\frac{1}{r}} \bigg\|_{L^p(w)} 
\lesssim
\bigg\|\bigg( \sum_k |g_k|^r\bigg)^{\frac{1}{r}} \bigg\|_{L^p(w)}. \]
\end{lemma}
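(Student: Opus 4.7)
The plan is to deduce this vector-valued inequality directly from Rubio de Francia extrapolation (in the extrapolation-pairs framework recalled in Section~\ref{section:extrapol}) together with the scalar Muckenhoupt $L^p(w)$ bound for the Hardy--Littlewood maximal operator. No separate argument is really needed once the abstract machinery is in place; this is why the authors stress the word \emph{immediate}.

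Concretely, I would consider the family of extrapolation pairs
\[ \F = \bigg\{ \bigg(\bigg(\sum_{k} (Mg_k)^r\bigg)^{1/r}, \bigg(\sum_{k} |g_k|^r\bigg)^{1/r}\bigg) : \{g_k\} \text{ a finite family in } L^\infty_c \bigg\} \]
and verify the single-exponent hypothesis of extrapolation at $p_0 = r$. For any $w \in A_r$, Muckenhoupt's theorem yields the scalar bound $\|Mg\|_{L^r(w)} \lesssim \|g\|_{L^r(w)}$ with constant depending only on $[w]_{A_r}$, and Fubini together with this scalar bound gives
\[ \bigg\|\bigg(\sum_{k} (Mg_k)^r\bigg)^{1/r}\bigg\|_{L^r(w)}^r = \sum_{k} \|Mg_k\|_{L^r(w)}^r \lesssim \sum_{k} \|g_k\|_{L^r(w)}^r, \]
which is exactly the $L^r(w)$ estimate required of the pair $(f,g)\in \F$, with constant independent of $w$ beyond its $A_r$ characteristic.

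Having this base case at the single exponent $r$ for every $w \in A_r$, I would apply the standard scalar Rubio de Francia extrapolation theorem (for instance \cite[Theorem~3.9]{MR2797562}) to produce
\[ \bigg\|\bigg(\sum_{k} (Mg_k)^r\bigg)^{1/r}\bigg\|_{L^p(w)} \lesssim \bigg\|\bigg(\sum_{k} |g_k|^r\bigg)^{1/r}\bigg\|_{L^p(w)} \]
for every $1<p<\infty$ and every $w \in A_p$, with constant independent of the (finite) number of terms. Letting the number of terms tend to infinity and invoking monotone convergence on both sides upgrades the inequality to countable sums, giving the full statement.

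The main (in fact, only) obstacle is bookkeeping: one must check that the constant produced by extrapolation genuinely does not depend on the number of terms, which is automatic since the base-case constant depends on $w$ only through $[w]_{A_r}$ and otherwise only on $n$ and $r$, with no implicit dependence on the family $\{g_k\}$.
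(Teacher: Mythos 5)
Your argument is correct and coincides with the paper's intended approach: the authors do not give a detailed proof but instead observe (citing~\cite[Section~3.8]{MR2797562}) that the vector-valued bound is an immediate consequence of the scalar Muckenhoupt theorem via Rubio de Francia extrapolation, which is exactly what you carry out by verifying the base case at $p_0=r$ with Tonelli and then extrapolating. The only point worth noting is your explicit attention to the uniformity of the constant over the family $\{g_k\}$ and the monotone-convergence passage to countable sums, both of which are indeed the small details left implicit in the paper's one-line citation.
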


The second result on variable Lebesgue spaces, is also an immediate
consequence of the scalar weighted norm
inequalities and extrapolation~\cite[Theorem~4.25]{MR2797562},
\cite[Corollary~5.34]{cruz-fiorenza-book}.

\begin{lemma} \label{prop:fef-stein-var}
Given $\pp \in \Pp_0$, such that $\pp\in LH$ and $1<p_-\leq p_+<\infty$,
and $1<r<\infty$,
\[ \bigg\|\bigg( \sum_k (Mg_k)^r\bigg)^{\frac{1}{r}} \bigg\|_{L^\pp}
\lesssim
\bigg\|\bigg( \sum_k |g_k|^r\bigg)^{\frac{1}{r}} \bigg\|_{L^\pp}. \]
\end{lemma}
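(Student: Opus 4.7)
The plan is to deduce the variable Lebesgue vector-valued Fefferman--Stein inequality from its weighted scalar counterpart (Lemma~\ref{lemma:fef-stein-wts}) by Rubio de Francia extrapolation into the scale of variable Lebesgue spaces, exactly as suggested in the paper's own commentary before the lemma.

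First I would fix $r>1$ and set up the family of extrapolation pairs
\[ \F = \bigg\{ \bigg( \bigg(\sum_{k=1}^N (Mg_k)^r\bigg)^{1/r},\ \bigg(\sum_{k=1}^N |g_k|^r\bigg)^{1/r} \bigg) : N\in\mathbb{N},\ g_k\in L^\infty_c \bigg\}. \]
For every $(F,G)\in\F$ and every $w\in A_{p_0}$ with $1<p_0<\infty$, we automatically have $\|F\|_{L^{p_0}(w)}<\infty$, since each $g_k\in L^\infty_c$ and hence each $Mg_k$ lies in $L^{p_0}(w)$. Lemma~\ref{lemma:fef-stein-wts} then yields
\[ \|F\|_{L^{p_0}(w)} \lesssim \|G\|_{L^{p_0}(w)}, \qquad (F,G)\in\F, \]
for any fixed $1<p_0<\infty$ and every $w\in A_{p_0}$, with a constant depending only on $n$, $r$, $p_0$, and $[w]_{A_{p_0}}$.

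Next I would invoke the classical Rubio de Francia extrapolation theorem into variable Lebesgue spaces (Theorem~4.25 of \cite{MR2797562}, or equivalently Corollary~5.34 of \cite{cruz-fiorenza-book}). Under the standing hypotheses $\pp\in LH$ and $1<p_-\leq p_+<\infty$, this theorem transfers the above $A_{p_0}$-weighted scalar bound on $\F$ to the corresponding variable Lebesgue bound
\[ \|F\|_{L^\pp}\lesssim \|G\|_{L^\pp}, \qquad (F,G)\in \F. \]
Finally, to extend the estimate from finite truncations involving $g_k\in L^\infty_c$ to arbitrary sequences $\{g_k\}$, I would let $N\to\infty$ and use monotone convergence together with the Fatou property of the variable Lebesgue quasi-norm (recorded at the start of Section~\ref{section:extrapol}).

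The lemma is genuinely a soft consequence of extrapolation, so there is no substantial obstacle; the one point requiring mild care is ensuring that the a priori finiteness hypothesis $\|F\|_{L^{p_0}(w)}<\infty$ demanded by the extrapolation-pair framework is satisfied, which is why the family $\F$ is built from bounded, compactly supported $g_k$ and finite partial sums.
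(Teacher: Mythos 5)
Your proposal is correct and takes essentially the same route as the paper, which likewise obtains the lemma by Rubio de Francia extrapolation into variable Lebesgue spaces from the weighted estimates, citing~\cite[Theorem~4.25]{MR2797562} and~\cite[Corollary~5.34]{cruz-fiorenza-book} exactly as you do. The only cosmetic difference is that you feed the family of $\ell^r$-valued pairs, together with Lemma~\ref{lemma:fef-stein-wts}, into the scalar extrapolation theorem, whereas the paper invokes the $\ell^r$-valued corollary built into the cited extrapolation results; the two are equivalent, and your care with the a priori finiteness hypothesis and the limit $N\to\infty$ is the right, if routine, technical bookkeeping.
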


The next two results are the analogues of the previous two for the
fractional maximal operator.  The first follows from the scalar, weighted
inequalities for $M_\alpha$ and off-diagonal
extrapolation~\cite[Theorem~3.23]{MR2797562}.

\begin{lemma} \label{prop:frac-fef-stein-wts}
Given $0<\alpha<n$, $1<r<\infty$, and $1<p<\frac{n}{\alpha}$, define $q$ by
  $\frac{1}{p}-\frac{1}{q}=\frac{\alpha}{n}$.  If $w\in A_{p,q}$, then 
\[ \bigg\|\bigg( \sum_k (M_\alpha g_k)^r\bigg)^{\frac{1}{r}} \bigg\|_{L^q(w^q)} 
\lesssim
\bigg\|\bigg( \sum_k |g_k|^r\bigg)^{\frac{1}{r}} \bigg\|_{L^p(w^p)}. \]
\end{lemma}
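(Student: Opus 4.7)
My approach is to deduce the vector-valued bound from the known scalar weighted estimate $\|M_\alpha f\|_{L^q(w^q)}\lesssim\|f\|_{L^p(w^p)}$ (Muckenhoupt--Wheeden, valid for $1<p<n/\alpha$, $1/p-1/q=\alpha/n$, and $w\in A_{p,q}$) by applying off-diagonal Rubio de Francia extrapolation (\cite[Theorem~3.23]{MR2797562}) to the family of extrapolation pairs
\[ \F=\bigg\{\bigg(\bigg(\sum_{k=1}^N(M_\alpha g_k)^r\bigg)^{\!1/r},\,\bigg(\sum_{k=1}^N|g_k|^r\bigg)^{\!1/r}\bigg):N\in\mathbb N,\,g_k\in L^\infty_c\bigg\}. \]
The theorem reduces the lemma to verifying $\|F\|_{L^{q_0}(w^{q_0})}\lesssim\|G\|_{L^{p_0}(w^{p_0})}$ for a single pair $(p_0,q_0)$ with $1/p_0-1/q_0=\alpha/n$, uniformly in $w\in A_{p_0,q_0}$; extrapolation then automatically delivers the inequality for every admissible $(p,q)$ and every $w\in A_{p,q}$.

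\textbf{Key step: the initial pair.} The natural choice is $p_0=r$ when $r<n/\alpha$; then $q_0=nr/(n-\alpha r)>r$, so $L^{q_0/r}(w^{q_0})$ is a Banach space. Minkowski's integral inequality followed by the scalar weighted bound for $M_\alpha$ (applied term-by-term, using $w\in A_{r,q_0}$) yields
\[ \|F\|_{L^{q_0}(w^{q_0})}^r=\bigg\|\sum_k(M_\alpha g_k)^r\bigg\|_{L^{q_0/r}(w^{q_0})}\le\sum_k\|M_\alpha g_k\|_{L^{q_0}(w^{q_0})}^r\lesssim\sum_k\|g_k\|_{L^r(w^r)}^r=\|G\|_{L^r(w^r)}^r, \]
which is precisely the hypothesis required by extrapolation.

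\textbf{Main obstacle.} The delicate case is $r\ge n/\alpha$, where $p_0=r$ lies outside the boundedness range of $M_\alpha$. The cleanest fix is to pick any $p_0\in(1,\min(r,n/\alpha))$ for the initial pair: since $q_0/r$ may now be less than $1$, the Banach-space Minkowski step is replaced by the subadditivity $(\sum a_k)^{q_0/r}\le\sum a_k^{q_0/r}$, after which one must reconcile $\sum_k\|g_k\|_{L^{p_0}(w^{p_0})}^{q_0}$ with $\|G\|_{L^{p_0}(w^{p_0})}^{q_0}$---a comparison that runs against the usual direction of $\ell^{p_0}\hookrightarrow\ell^{q_0}$ and therefore requires extra care. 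A tidier alternative is to first establish the lemma for some $r_0<n/\alpha$ via the step above and then interpolate on the $\ell^r$ exponent against the trivial $r=\infty$ endpoint, where $\sup_k M_\alpha g_k\le M_\alpha(\sup_k|g_k|)$ reduces matters to the scalar bound; in either route, once \emph{any} initial pair is verified, the off-diagonal extrapolation theorem handles the passage to all remaining $(p,q)$ and $w\in A_{p,q}$ automatically.
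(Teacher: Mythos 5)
Your overall approach — reduce the vector-valued statement to a single initial off-diagonal pair and invoke Theorem~3.23 of \cite{MR2797562} — is exactly what the paper does; the paper's ``proof'' is simply the citation, and you have correctly identified the mechanism and filled in the initial estimate in the range $r<n/\alpha$.

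The ``main obstacle'' you flag for $r\ge n/\alpha$ is not actually an obstacle, and neither of your proposed repairs is needed. The point you miss is that $p_0$ need not equal $r$: you are free to choose any admissible initial pair, and you should pick $p_0\in(1,n/\alpha)$ close enough to $n/\alpha$ that $q_0=np_0/(n-\alpha p_0)>r$ (possible because $q_0\to\infty$ as $p_0\to n/\alpha$). Then $p_0<r<q_0$ automatically, and the initial estimate follows in one line without any subadditivity or interpolation: since $q_0/r>1$, ordinary Minkowski gives
\[
\Big\|\Big(\sum_k(M_\alpha g_k)^r\Big)^{1/r}\Big\|_{L^{q_0}(w^{q_0})}^r
=\Big\|\sum_k(M_\alpha g_k)^r\Big\|_{L^{q_0/r}(w^{q_0})}
\le\sum_k\|M_\alpha g_k\|_{L^{q_0}(w^{q_0})}^r
\lesssim\sum_k\|g_k\|_{L^{p_0}(w^{p_0})}^r,
\]
while since $p_0/r<1$, the reverse Minkowski inequality for $L^s$ with $0<s<1$ (valid for non-negative functions) gives
\[
\Big\|\Big(\sum_k|g_k|^r\Big)^{1/r}\Big\|_{L^{p_0}(w^{p_0})}^r
=\Big\|\sum_k|g_k|^r\Big\|_{L^{p_0/r}(w^{p_0})}
\ge\sum_k\|g_k\|_{L^{p_0}(w^{p_0})}^r.
\]
Chaining these two displays yields the hypothesis of Theorem~3.23 for the vector-valued family at $(p_0,q_0)$, uniformly over $w\in A_{p_0,q_0}$, and extrapolation finishes the proof for every admissible $(p,q)$ and every $1<r<\infty$. (Note also that when $r<n/\alpha$ your choice $p_0=r$ is just the degenerate case $p_0/r=1$ of the same computation.) Your alternative suggestion to interpolate in the $\ell^r$ exponent is not straightforward here because $M_\alpha$ is sublinear rather than linear, so it is best avoided.
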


The final inequality follows
by off-diagonal extrapolation in the variable Lebesgue
spaces~\cite[Theorem 5.28]{cruz-fiorenza-book}.   The vector-valued
inequality is not explicitly proved there; however, it can be gotten
by extrapolating starting with the weighted vector-valued inequality
in~\cite[Theorem~3.23]{MR2797562}.

\begin{lemma} \label{prop:frac-fef-stein-var}
Given $0<\alpha<n$ and $\pp \in \Pp_0$, suppose $1<p_-\leq
p_+<\frac{n}{\alpha}$.  Define $\qq$ by
  $\frac{1}{\pp}-\frac{1}{\qq}=\frac{\alpha}{n}$.  Then
\[ \bigg\|\bigg( \sum_k (M_\alpha g_k)^r\bigg)^{\frac{1}{r}} \bigg\|_{L^\qq} 
\lesssim
\bigg\|\bigg( \sum_k |g_k|^r\bigg)^{\frac{1}{r}} \bigg\|_{L^\pp}. \]
\end{lemma}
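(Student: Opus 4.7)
The plan is to obtain this inequality from the scalar weighted Muckenhoupt--Wheeden bound $M_\alpha : L^{p_0}(w^{p_0})\to L^{q_0}(w^{q_0})$, valid for $1<p_0<n/\alpha$, $1/p_0-1/q_0=\alpha/n$, and $w\in A_{p_0,q_0}$, by a single application of off-diagonal Rubio de Francia extrapolation into the scale of variable Lebesgue spaces. The usual extrapolation-pair reformulation packages both the vector-valued and the variable-exponent conclusions into one step.

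Concretely, I would first fix $1<r<\infty$ and introduce the family of extrapolation pairs
\[ \F = \bigg\{\bigg(\Big(\sum_{k=1}^K (M_\alpha g_k)^r\Big)^{1/r},\;\Big(\sum_{k=1}^K |g_k|^r\Big)^{1/r}\bigg) : K\in \mathbb{N},\; g_k \in L^\infty_c\bigg\}. \]
By Lemma~\ref{prop:frac-fef-stein-wts}, every pair $(F,G)\in\F$ satisfies $\|F\|_{L^{q_0}(w^{q_0})} \lesssim \|G\|_{L^{p_0}(w^{p_0})}$ uniformly in $w\in A_{p_0,q_0}$ and in the truncation level $K$. (Equivalently, one may start from the scalar weighted bound for $M_\alpha$ and obtain the vector-valued weighted statement as a first application of the off-diagonal extrapolation theorem~\cite[Theorem~3.23]{MR2797562}.) I would then apply off-diagonal extrapolation into variable Lebesgue spaces with respect to the $A_{p,q}$ classes,~\cite[Theorem~5.28]{cruz-fiorenza-book}: since $p_->1$ we may pick $p_0\in (1,p_-)$, and since $p_+<n/\alpha$ we automatically get $p_0<n/\alpha$, so $q_0$ is finite and the offset $1/p_0-1/q_0=\alpha/n$ matches $1/\pp-1/\qq$. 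The extrapolation theorem then transfers the uniform weighted bound on $\F$ into $\|F\|_{L^\qq}\lesssim \|G\|_{L^\pp}$, which is the desired claim.

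The main obstacle is essentially bookkeeping. One has to verify that, with $\pp$ log-H\"older (needed, in parallel with Lemma~\ref{prop:fef-stein-var}, to ensure the Rubio de Francia iteration algorithm driving the extrapolation acts boundedly on the relevant auxiliary variable Lebesgue space) and with $p_0$ chosen slightly below $p_-$, the index restrictions of the variable-exponent off-diagonal extrapolation theorem are met and collapse exactly to the hypothesis $1<p_-\leq p_+<n/\alpha$. No genuinely new analytic ingredient is required beyond the machinery already packaged in the cited extrapolation theorems and the internal mechanism of Theorem~\ref{thm:offdiagrh-extrapol}.
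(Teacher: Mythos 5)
Your argument matches the paper's own reasoning: Lemma~\ref{prop:frac-fef-stein-var} is obtained there exactly by noting that the weighted vector-valued bound (Lemma~\ref{prop:frac-fef-stein-wts}, itself a consequence of the scalar $A_{p,q}$ estimate and off-diagonal extrapolation~\cite[Theorem~3.23]{MR2797562}) can be fed into off-diagonal extrapolation for variable Lebesgue spaces~\cite[Theorem~5.28]{cruz-fiorenza-book}. Your only addition is to make the extrapolation-pair bookkeeping explicit and to flag the log-H\"older hypothesis (which the paper leaves implicit in this lemma's statement but uses throughout), which is consistent with, not different from, the paper's route.
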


\begin{remark} \label{remark:vv-note} In applying these vector-valued
  inequalities, we will use two generalizations.  Rather than give
  these as corollaries, we instead describe the underlying ideas for
  adapting the above results.  First, since maximal operators are
  positive homogeneous, we can, for example, replace $(Mg_k)^r$ by
  $\lambda_k (Mg_k)^r=(M(\lambda_k^{\frac{1}{r}}g_k))^r$ on the
  left-hand side and $g_k^r$ by $\lambda_k g_k^r$ in the right-hand
  term.

Second, if we let $g_k=\chi_{Q_k}$ for some collection of cubes $Q_k$,
then given $0<p<\infty$,  $\tau>1$, and $w\in A_\infty$, there exists
$r>1$ such that $w\in A_{rp}$, and so we have that
\begin{equation}
\label{eq.A4010}
\bigg\|\sum_k \chi_{\tau Q_k} \bigg\|_{L^p(w)}
\lesssim
\bigg\|\bigg(\sum_k M(\chi_{ Q_k})^r\bigg)^{\frac{1}{r}}
\bigg\|_{L^{rp}(w)}^r
\lesssim 
\bigg\|\sum_k \chi_{Q_k} \bigg\|_{L^p(w)}.
\end{equation}
\end{remark}

\bigskip

We now turn to generalizations of the lemma of Grafakos and Kalton
discussed in the introduction.  First, though not actually necessary
for the proof of our main results,
we will show that by extrapolation we can easily prove a weighted
generalization of their inequality.

\begin{lemma} \label{lem:GK} For $0<p\leq 1$, if
  $w\in RH_{(\frac{1}{p})'}$, then for all sequences of cubes
  $\{Q_k\}$ and non-negative functions $\{g_k\}$ such that
  $\supp(g_k)\subset Q_k$,
\[ \bigg\| \sum_k g_k\bigg\|_{L^p(w)}
\lesssim \bigg \|\sum_k \bigg(\avgint_{Q_k} g_k\,dy\bigg) \chi_{Q_k}\bigg\|_{L^p(w)}. \]
\end{lemma}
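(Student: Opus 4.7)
The plan is to deduce this from the reverse-Hölder extrapolation theorem (Theorem~\ref{thm:rh-ACM}). I set up the extrapolation family
\[
\F = \bigg\{ \bigg(\sum_k g_k,\ \sum_k \Big(\avgint_{Q_k} g_k\,dy\Big)\chi_{Q_k}\bigg) : \{Q_k\}\text{ cubes},\ g_k\ge 0,\ \supp(g_k)\subset Q_k\bigg\},
\]
and apply Theorem~\ref{thm:rh-ACM} with $p_0=q_0=1$. Since $(q_0/p_0)'=\infty$, the hypothesis I need to verify is the base case at $p_0=1$: for every $w_0\in RH_\infty$,
\[
\bigg\|\sum_k g_k\bigg\|_{L^1(w_0)} \lesssim \bigg\|\sum_k \Big(\avgint_{Q_k} g_k\,dy\Big)\chi_{Q_k}\bigg\|_{L^1(w_0)}, \qquad (f,g)\in\F.
\]
Once this is in hand, Theorem~\ref{thm:rh-ACM} immediately yields the inequality for every $0<p<1$ with $w\in RH_{(1/p)'}$, and the endpoint $p=1$ is the base case itself.

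The base case is short. Fix $w_0\in RH_\infty$, so $w_0(x)\leq C\,\avgint_{Q_k} w_0\,dy$ for a.e.\ $x\in Q_k$. Since $\supp(g_k)\subset Q_k$,
\[
\int_{\R^n} g_k\,w_0\,dx
= \int_{Q_k} g_k\,w_0\,dx
\leq C\Big(\avgint_{Q_k} w_0\,dy\Big)\int_{Q_k} g_k\,dx
= C\int_{\R^n}\Big(\avgint_{Q_k} g_k\,dy\Big)\chi_{Q_k}\,w_0\,dx.
\]
Summing in $k$ (by monotone convergence) gives exactly the desired $L^1(w_0)$ inequality with a constant depending only on $[w_0]_{RH_\infty}$, as extrapolation requires.

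The only remaining point is the technical finiteness assumption $\|f\|_{L^{p_0}(w_0)}<\infty$ in the extrapolation framework. This is handled by the standard device outlined in Section~\ref{section:extrapol}: replace $\sum_k g_k$ by its truncation $\min\!\big(\sum_k g_k,N\big)\chi_{B(0,N)}$, which clearly still dominates the right-hand side pointwise (well, after verifying monotonicity with respect to $N$) and then let $N\to\infty$ using Fatou's lemma, which holds in this quasi-Banach setting. Equivalently, one can reduce to finite sums and then pass to the limit.

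The main conceptual obstacle is recognizing the right base pair $(p_0,q_0)=(1,1)$, since at first glance one might try to use $p_0=p$; choosing $p_0=q_0=1$ is what makes the hypothesis collapse to the clean $RH_\infty$ estimate above. Beyond that, there is no real difficulty: the lemma is essentially a direct consequence of the pointwise characterization of $RH_\infty$ together with Theorem~\ref{thm:rh-ACM}.
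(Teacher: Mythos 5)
Your proposal is correct and follows exactly the paper's route: a direct computation with the pointwise $RH_\infty$ bound establishes the $p=1$ base case, and Theorem~\ref{thm:rh-ACM} with $p_0=q_0=1$ then extrapolates down to $0<p<1$. The extra remarks on the extrapolation family and the truncation device are standard bookkeeping that the paper leaves implicit; otherwise the two arguments coincide.
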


\begin{proof} 
We first prove this for $p=1$ and $w\in RH_\infty$.  But in this case the result is trivial:
\begin{multline*}
 \bigg\| \sum_k g_k\bigg\|_{L^1(w)}
= \sum_k \int_{Q_k} g_k w\,dy  \\
\lesssim \sum_k \int_{Q_k} g_k \,dy \avgint_{Q_k} w\,dy 
= \bigg \|\sum_k \bigg(\avgint_{Q_k} g_k\,dy\bigg) \chi_{Q_k}\bigg\|_{L^1(w)}.
\end{multline*}

The proof for $0<p<1$ and $w\in RH_{(\frac{1}{p})'}$ follows at once
via reverse H\"older extrapolation, Theorem~\ref{thm:rh-ACM}. 
\end{proof}

Below, we will use the following generalization of the lemma of
Grafakos and Kalton, which lets us eliminate the hypothesis that
$p\leq 1$ but replaces the $L^1$ averages by $L^q$ averages.

\begin{lemma} \label{lemma:GK-q}
Fix $q>1$. If $0<p<q$ and $w\in RH_{(\frac{q}{p})'}$, then  for all
sequences of cubes  $\{Q_k\}$ and non-negative functions
$\{g_k\}$ such that $\supp(g_k)\subset Q_k$,  
\[ \bigg\| \sum_k g_k\bigg\|_{L^p(w)}
\lesssim \bigg \|\sum_k \bigg(\avgint_{Q_k} g_k^q\,dy\bigg)^{\frac{1}{q}} \chi_{Q_k}\bigg\|_{L^p(w)}. \]
\end{lemma}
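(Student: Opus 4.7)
The plan is to reduce the inequality to its $p=1$ case via reverse H\"older extrapolation (Theorem~\ref{thm:rh-ACM}) applied to the family of extrapolation pairs
\[
\F = \bigg\{\bigg(\sum_k g_k,\, \sum_k \bigg(\avgint_{Q_k} g_k^q\,dy\bigg)^{\frac{1}{q}}\chi_{Q_k}\bigg)\bigg\},
\]
indexed over finite collections of cubes $\{Q_k\}$ and non-negative functions $\{g_k\}$ with $\supp(g_k)\subset Q_k$. The general case follows by truncation and Fatou as explained in Section~\ref{section:extrapol}. Applying Theorem~\ref{thm:rh-ACM} with $q_0=q$ and $p_0=1$ (which is admissible since $q>1$) reduces the lemma to the base inequality
\[
\bigg\|\sum_k g_k\bigg\|_{L^1(w)} \lesssim \bigg\|\sum_k \bigg(\avgint_{Q_k} g_k^q\,dy\bigg)^{\frac{1}{q}}\chi_{Q_k}\bigg\|_{L^1(w)}
\]
for every $w\in RH_{(q/1)'}=RH_{q'}$.

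For the base case I would expand the left-hand side as $\sum_k \int_{Q_k} g_k w\,dx$ and apply H\"older's inequality with exponents $q$ and $q'$ on each cube:
\[
\int_{Q_k} g_k w\,dx \leq \bigg(\int_{Q_k} g_k^q\,dx\bigg)^{\frac{1}{q}}\bigg(\int_{Q_k} w^{q'}\,dx\bigg)^{\frac{1}{q'}}.
\]
The reverse H\"older condition $w\in RH_{q'}$ gives $\bigl(\avgint_{Q_k} w^{q'}\,dx\bigr)^{1/q'}\lesssim \avgint_{Q_k} w\,dx$, and after redistributing the factors of $|Q_k|$ this becomes
\[
\int_{Q_k} g_k w\,dx \lesssim \bigg(\avgint_{Q_k} g_k^q\,dy\bigg)^{\frac{1}{q}}\int_{Q_k} w\,dx.
\]
Summing over $k$ and rewriting the right-hand side as an integral against $w$ produces exactly the base $L^1$ inequality.

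The main obstacle, such as it is, is really just bookkeeping: one must check that the limiting index $p_0=1$ is genuinely allowed in Theorem~\ref{thm:rh-ACM} (requires $q>1$, which is given), and that the reverse H\"older class $(q_0/p_0)'=q'$ on the base level matches the exponent that reverse H\"older inequality requires to pair with $q$. Once these are verified, the extrapolation machinery from Section~\ref{section:extrapol} promotes the elementary $p=1$ estimate to the full stated range $0<p<q$ with $w\in RH_{(q/p)'}$, paralleling exactly how Lemma~\ref{lem:GK} was deduced from its trivial $p=1$ case.
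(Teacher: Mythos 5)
Your proof is correct and follows essentially the same route as the paper's: both establish the $p=1$ base case by applying H\"older's inequality with exponents $q$, $q'$ on each cube and invoking the $RH_{q'}$ condition, then promote this to the full range $0<p<q$ via Theorem~\ref{thm:rh-ACM} with $p_0=1$ and $q_0=q$. Your extra care in spelling out the extrapolation family and checking the admissibility of $p_0=1$ is a nice sanity check, but the substance is identical.
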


\begin{proof} 
We first prove this for $p=1$ and $w\in RH_{q'}$.  But in this case the result is trivial:
\begin{align*}
 \bigg\| \sum_k g_k\bigg\|_{L^1(w)}
& = \sum_k \avgint_{Q_k} g_k w\,dy\; |Q_k| \\
& \leq \sum_k \bigg(\avgint_{Q_k} g_k^q \,dy\bigg)^{\frac{1}{q}}
                                              \bigg(\avgint_{Q_k} w^{q'}\,dy\bigg)^{\frac{1}{q'}} |Q_k| \\
  &  \lesssim \sum_k \bigg(\avgint_{Q_k} g_k^q
    \,dy\bigg)^{\frac{1}{q}} w(Q_k) \\
& =   \bigg \|\sum_k \bigg(\avgint_{Q_k} g_k^q\,dy\bigg)^{\frac1q} \chi_{Q_k}\bigg\|_{L^1(w)}.
\end{align*}

The desired result for $0<p<q$ and $w\in RH_{(\frac{q}{p})'}$ follows
at once from reverse H\"older extrapolation, Theorem~\ref{thm:rh-ACM}.
\end{proof}

We can now use extrapolation to extend the previous two results to the
scale of variable Lebesgue spaces.

\begin{lemma} \label{cor:var-GK} Given $\pp\in \Pp_0$, suppose
  $0<p_-\leq p_+<1$ and $\pp \in LH$.  Then for all sequences of cubes
  $\{Q_k\}$ and functions $\{g_k\}$ such that $\supp(g_k)\subset Q_k$,
\[ \bigg\| \sum_k g_k\bigg\|_{\Lp}
\lesssim \bigg \|\sum_k \bigg(\avgint_{Q_k} g_k\,dy\bigg) \chi_{Q_k}\bigg\|_{\Lp}. \]
If we only assume that $p_+<\infty$, then for any $q$ such that
$p_+<q<\infty$,
\[ \bigg\| \sum_k g_k\bigg\|_{\Lp}
\lesssim \bigg \|\sum_k \bigg(\avgint_{Q_k} g_k^q\,dy\bigg)^{\frac{1}{q}} \chi_{Q_k}\bigg\|_{\Lp}. \]
\end{lemma}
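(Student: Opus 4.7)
The plan is to derive both inequalities from the corresponding weighted versions, Lemmas~\ref{lem:GK} and~\ref{lemma:GK-q}, via reverse H\"older extrapolation (Theorem~\ref{thm:rh-extrapol}). The two parts admit essentially the same short argument once the family of extrapolation pairs is set up correctly.

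For each sequence of cubes $\{Q_k\}$ and non-negative functions $\{g_k\}$ with $\supp(g_k)\subset Q_k$, I will form the pair $(f,G)$ with $f=\sum_k g_k$ and $G$ equal to the corresponding right-hand side (the $L^1$ averages of the $g_k$ for the first inequality, the $L^q$ averages for the second), and let $\F$ be the family of all such pairs.

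For the first inequality, fix any $p_0$ with $0<p_0<p_-$; note that automatically $p_0<1$ since $p_+<1$. Lemma~\ref{lem:GK} applied with $p=p_0$ yields
\[
\|f\|_{L^{p_0}(w)} \lesssim \|G\|_{L^{p_0}(w)}, \qquad (f,G)\in\F,
\]
for every $w\in RH_{(1/p_0)'}$. Since $p_0<p_-\le p_+<1$ and $\pp\in LH$, Theorem~\ref{thm:rh-extrapol} applied with $p=p_0$ and $q=1$ immediately upgrades this to the $L^\pp$ bound. For the second inequality, fix $q$ with $p_+<q<\infty$ and choose $p_0$ with $0<p_0<p_-$ (in particular $p_0<q$). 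Lemma~\ref{lemma:GK-q} then gives $\|f\|_{L^{p_0}(w)}\lesssim\|G\|_{L^{p_0}(w)}$ for every $w\in RH_{(q/p_0)'}$, and Theorem~\ref{thm:rh-extrapol} with $p=p_0$ and this $q$ (valid since $p_0<p_-\le p_+<q$) yields the desired variable-exponent estimate.

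The only technical point is the finiteness requirement $\|f\|_X<\infty$ built into the extrapolation formalism; this is handled by the routine truncation procedure recalled at the start of Section~\ref{section:extrapol}. Once the inequality is established for the truncated pairs $(\min(f,M)\chi_{B(0,M)},G)$, Fatou's lemma in the quasi-Banach space $L^\pp$ recovers the full statement by letting $M\to\infty$. I do not foresee a genuine obstacle beyond this bookkeeping.
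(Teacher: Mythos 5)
Your proof is correct and follows exactly the same route as the paper's: for the first inequality, apply Lemma~\ref{lem:GK} to get the weighted $L^{p_0}(w)$ bound over $w\in RH_{(1/p_0)'}$ and extrapolate via Theorem~\ref{thm:rh-extrapol} with $q=1$ and $0<p_0<p_-$; for the second, apply Lemma~\ref{lemma:GK-q} and extrapolate with the given $q>p_+$. The paper's proof is a two-sentence citation of the same two lemmas with the same parameter choices, so your proposal simply spells out the bookkeeping (the extrapolation-pair family and the truncation/Fatou step) that the paper leaves implicit.
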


\begin{proof}
  The first inequality follows from Lemma~\ref{lem:GK} and
  Theorem~\ref{thm:rh-extrapol} with $q=1$ and any $p$ such that $0<p<p_-$.
  The second inequality follows from Lemma~\ref{lemma:GK-q} and
  Theorem~\ref{thm:rh-extrapol} with $p_+<q<\infty $ and $0<p<p_-$.
\end{proof}

\medskip

The following off-diagonal inequality plays a role in the proof of
Hardy space estimates for the fractional integral operator.  It was
first proved by Str\"omberg and Wheeden~\cite{MR766221}; here we again
give an elementary proof using extrapolation.

\begin{lemma} \label{lem:frac} Suppose $0<\alpha<n$,
  $0<p<\frac{n}{\alpha}$, and $\frac1q=\frac1p-\frac{\alpha}{n}$.  If
  $w^p\in RH_{\frac{q}{p}}$, then for any countable collection of
  cubes $\{Q_k\}$ and $\lambda_k>0$, 
\[ \left\|\sum_k
    \lambda_k|Q_k|^{\frac{\alpha}{n}}\chi_{Q_k}\right\|_{L^q(w^q)}
\lesssim 
\left\|\sum_k \lambda_k\chi_{Q_k}\right\|_{L^p(w^p)}. \]
\end{lemma}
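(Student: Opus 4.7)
The plan is to deduce this from the off-diagonal reverse H\"older extrapolation theorem (Theorem~\ref{thm:offdiagRHex}) applied to the family of extrapolation pairs
\[
\F = \bigg\{\bigg(\sum_k \lambda_k |Q_k|^{\alpha/n}\chi_{Q_k},\ \sum_k \lambda_k \chi_{Q_k}\bigg) : \{Q_k\}\text{ cubes},\ \lambda_k>0\bigg\}.
\]
Taking $r=n/\alpha$, the target weight condition $w^p\in RH_{(r/p)'}$ of the extrapolation theorem coincides exactly with $w^p\in RH_{q/p}$, since $1/p-1/q=\alpha/n$ forces $(r/p)'=n/(n-\alpha p)=q/p$. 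Hence it suffices to verify the base inequality at a single convenient pair $(p_0,q_0)$.

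I would choose $p_0=1$ and $q_0=n/(n-\alpha)$, so that $q_0'=n/\alpha$, $1/p_0-1/q_0=\alpha/n$, and the hypotheses $p_0\le r$ and $1/q_0-1/p_0+1/r=0$ hold. The corresponding base estimate to be proved is
\[
\|fw\|_{L^{q_0}}\lesssim \|gw\|_{L^1}
\]
for every $(f,g)\in\F$ and every $w\in RH_{(r/p_0)'}=RH_{q_0}$.

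For the base case I would use duality. Writing $\|fw\|_{L^{q_0}}=\sup\int fw\phi\,dx$ over $\phi\ge 0$ with $\|\phi\|_{L^{q_0'}}\le 1$, the inner integral expands as $\sum_k \lambda_k|Q_k|^{\alpha/n}\int_{Q_k} w\phi\,dx$. Applying H\"older to each $\int_{Q_k} w\phi$ with exponents $(q_0,q_0')$, and then rewriting the reverse H\"older hypothesis as $\big(\int_{Q_k} w^{q_0}\big)^{1/q_0}\lesssim |Q_k|^{-1/q_0'}\int_{Q_k} w$, the factor $|Q_k|^{\alpha/n}$ cancels exactly against $|Q_k|^{-1/q_0'}$, since $1/q_0'=\alpha/n$. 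Estimating $\big(\int_{Q_k}\phi^{q_0'}\big)^{1/q_0'}\le \|\phi\|_{L^{q_0'}}$ and summing in $k$ then yields $\|fw\|_{L^{q_0}}\lesssim \int gw\,dx=\|gw\|_{L^1}$, which is the base case. The conclusion of the lemma for arbitrary admissible $p,q$ and $w^p\in RH_{q/p}$ is then immediate from Theorem~\ref{thm:offdiagRHex}.

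The main subtlety I expect is resisting the temptation to exploit the pointwise bound $f\lesssim I_\alpha g$: this would reduce the problem to a weighted boundedness of $I_\alpha$, which in general demands $w\in A_{p,q}$ rather than the weaker hypothesis $w^p\in RH_{q/p}$ that is actually given. Working on one cube at a time via duality sidesteps this gap by exploiting the specific structure of the extrapolation pairs, where the factor $|Q_k|^{\alpha/n}$ is precisely what is needed to compensate for the loss in the reverse H\"older step.
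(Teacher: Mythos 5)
Your proof is correct and takes essentially the same route as the paper: you apply Theorem~\ref{thm:offdiagRHex} with the same choice $r=n/\alpha$, $p_0=1$, $q_0=n/(n-\alpha)$, reduce to the same $L^1 \to L^{q_0}$ base estimate, and prove the base case by duality, H\"older on each cube, and the $RH_{q_0}$ hypothesis, with the $|Q_k|^{\alpha/n}$ factor cancelling the loss from reverse H\"older. The only cosmetic difference is that you use unweighted $L^{q_0}$ duality on the function $fw$, while the paper pairs against $g \in L^{q_0'}(u)$ with $u=w^{q_0}$; the resulting computations are identical.
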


\begin{proof}
  We will use Theorem \ref{thm:offdiagRHex} with
  $r=\frac{n}{\alpha}$, $p_0=1$, and
  $q_0=\frac{n}{n-\alpha}=(\frac{n}{\alpha})'$. We will show the
  estimate
\begin{equation}
\label{eq:L1est}
\left\|\sum_k \lambda_k
  |Q_k|^{\frac{\alpha}{n}}\chi_{Q_k}\right\|_{L^{\frac{n}{n-\alpha}}(w^{\frac{n}{n-\alpha}})}
\lesssim 
\left\|\sum_k \lambda_k \chi_{Q_k}\right\|_{L^1(w)}
\end{equation}
holds for all $w\in RH_{\frac{n}{n-\alpha}}$, all sequence of positive
numbers $\{\lambda_k\}$, and sequences of cubes $\{Q_k\}$.  If we
assume~\eqref{eq:L1est} for the moment, then by the Theorem
\ref{thm:offdiagRHex} we have that for all $0<p<\frac{n}{\alpha}$, $q$
satisfying $\frac1p-\frac1q=\frac{\alpha}{n}$, and $w$ such that
$w^p\in RH_{(\frac{n/\alpha}{p})'}=RH_{\frac{n}{n-\alpha
    p}}=RH_{\frac{q}{p}}$,
\[
  \left\|\sum_k\lambda_k|Q_k|^{\frac{\alpha}{n}}\chi_{Q_k}\right\|_{L^q(w^q)}
  \lesssim \left\|\sum_k\lambda_k\chi_k\right\|_{L^p(w^p)}, \]
which is the desired result. 

\medskip

To prove \eqref{eq:L1est}, 
let $w\in RH_{\frac{n}{n-\alpha}}$, $u=w^{\frac{n}{n-\alpha}}$, and
fix $g\geq 0$ in
$L^{(\frac{n}{n-\alpha})'}(w^{\frac{n}{n-\alpha}})=L^{\frac{n}{\alpha}}(u)$.
By duality, it will suffice to estimate the integral
$$\int_{\R^n}\left(\sum_k \lambda_k
  |Q_k|^{\frac{\alpha}{n}}\chi_{Q_k}\right)g
w^{\frac{n}{n-\alpha}}\,dx 
=
\sum_k \lambda_k |Q_k|^{\frac{\alpha}{n}}\int_{Q_k}g u\,dx.$$
But then we have that
\begin{align*}
\sum_k \lambda_k |Q_k|^{\frac{\alpha}{n}}\int_{Q_k}g u\,dx
&\leq 
\sum_k \lambda_k |Q_k|^{\frac{\alpha}{n}}\left(\int_{Q_k}g^{\frac{n}{\alpha}} u\,dx\right)^{\frac{\alpha}{n}} u(Q_k)^{1-\frac{\alpha}{n}}\\ 
&\leq 
\|g\|_{L^{\frac{n}{\alpha}}(u)}\sum_k \lambda_k|Q_k|^{\frac{\alpha}{n}} \left(\int_{Q_k}w^{\frac{n}{n-\alpha}}\,dx\right)^{1-\frac{\alpha}{n}} \\
&\leq 
C\|g\|_{L^{\frac{n}{\alpha}}(u)}\sum_k \lambda_k|Q_k|^{\frac{\alpha}{n}} \int_{Q_k}w\,dx |Q_k|^{-\frac{\alpha}{n}}\\
&=
C\|g\|_{L^{\frac{n}{\alpha}}(u)}\sum_k \lambda_k \int_{Q_k}w\,dx\\
&=
C\|g\|_{L^{\frac{n}{\alpha}}(u)}\int_{\R^n}\left(\sum_k \lambda_k
  \chi_{Q_k}\right)w\,dx.\\
\end{align*}
This completes the proof.
\end{proof}

Our final estimate extends Lemma~\ref{lem:frac} to the variable Lebesgue
spaces.  It was first proved by Sawano~\cite{MR3090168}; however, it
follows immediately from Lemma~\ref{lem:frac} by extrapolation, Theorem~\ref{thm:offdiagrh-extrapol}.

\begin{lemma} \label{lemma:frac-var}
Given $0<\alpha<n$, suppose $\pp\in \Pp_0$ is such that $\pp \in LH$
and $0<p_-\leq p_+< \frac{n}{\alpha}$.  Define $\qq$ by
$\frac{1}{\pp}-\frac{1}{\qq}=\frac{\alpha}{n}$.  Then for any countable
collection of cubes $\{Q_k\}$ and $\lambda_k>0$,
\[ \bigg\|\sum_k \lambda_k |Q_k|^{\frac{\alpha}{n}}\chi_{Q_k}
  \bigg\|_{\qq}
  \lesssim
  \bigg\|\sum_k \lambda_k \chi_{Q_k}
  \bigg\|_{\pp}. \]
\end{lemma}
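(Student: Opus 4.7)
The plan is to obtain Lemma~\ref{lemma:frac-var} as a direct application of the off-diagonal reverse-H\"older extrapolation theorem (Theorem~\ref{thm:offdiagrh-extrapol}) to the scalar weighted estimate of Lemma~\ref{lem:frac}. Concretely, I would let the extrapolation family be
\[
\F = \Bigl\{\Bigl(\textstyle\sum_k \lambda_k |Q_k|^{\alpha/n}\chi_{Q_k},\ \sum_k \lambda_k \chi_{Q_k}\Bigr) : \{Q_k\}\ \text{cubes},\ \lambda_k > 0\Bigr\},
\]
so that the desired conclusion is precisely that $\|f\|_{L^{\qq}}\lesssim \|g\|_{L^{\pp}}$ for all $(f,g)\in\F$.

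To invoke Theorem~\ref{thm:offdiagrh-extrapol} I would first pick an auxiliary exponent $p_0$ with $0 < p_0 < p_-$, and define $q_0$ by $\tfrac{1}{p_0}-\tfrac{1}{q_0}=\tfrac{\alpha}{n}$, so that $(\tfrac{n/\alpha}{p_0})' = \tfrac{q_0}{p_0}$. The hypothesis $p_+ < \tfrac{n}{\alpha}$ translates exactly to $p_+ < \bigl(\tfrac{1}{p_0}-\tfrac{1}{q_0}\bigr)^{-1}$, which is the upper bound required by the extrapolation theorem. Since $\pp \in LH$, the log-H\"older condition needed by the theorem is satisfied as well. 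The exponent $\qq$ produced by the theorem is defined by $\tfrac{1}{\pp}-\tfrac{1}{\qq}=\tfrac{1}{p_0}-\tfrac{1}{q_0}=\tfrac{\alpha}{n}$, matching our definition.

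Next I would verify the hypothesis of the extrapolation theorem on $\F$: for every weight $w$ with $w^{p_0}\in RH_{q_0/p_0}$ we need
\[
\bigl\|\textstyle\sum_k \lambda_k |Q_k|^{\alpha/n}\chi_{Q_k}\bigr\|_{L^{q_0}(w^{q_0})}
\lesssim
\bigl\|\textstyle\sum_k \lambda_k \chi_{Q_k}\bigr\|_{L^{p_0}(w^{p_0})}.
\]
But since $\tfrac{1}{p_0}-\tfrac{1}{q_0}=\tfrac{\alpha}{n}$, this is exactly Lemma~\ref{lem:frac} applied with the exponents $(p_0,q_0)$ and the weight $w$. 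Applying Theorem~\ref{thm:offdiagrh-extrapol} to $\F$ then yields the variable-exponent estimate stated in the lemma.

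The only mild subtlety I anticipate is the standing requirement in the extrapolation formalism that the left-hand norm be \emph{a priori} finite. I would handle this by first proving the inequality for finite partial sums $\sum_{k=1}^{M}\lambda_k|Q_k|^{\alpha/n}\chi_{Q_k}$, for which both sides are clearly finite, and then passing to the countable sum via monotone convergence (Fatou's lemma for the quasi-Banach function space $L^{\qq}$, as already invoked in Section~\ref{section:extrapol}). This should be the only nontrivial step; the rest is a clean bookkeeping application of the already-established extrapolation machinery.
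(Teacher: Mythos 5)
Your proposal is correct and is exactly the argument the paper has in mind: the paper states that Lemma~\ref{lemma:frac-var} ``follows immediately from Lemma~\ref{lem:frac} by extrapolation, Theorem~\ref{thm:offdiagrh-extrapol},'' without supplying further detail, and your choice of $p_0<p_-$, $\tfrac1{p_0}-\tfrac1{q_0}=\tfrac{\alpha}{n}$, together with the reduction to finite sums and a Fatou limiting argument, fills in that bookkeeping correctly.
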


\section{Singular integral operators}
\label{section:sio}

In this section we prove Theorems~\ref{thm:sio-weight}
and~\ref{thm:sio-var}.  For the proof we need two lemmas; the
essential ideas in their proofs are well-known (see, for
instance,~\cite{stein93}) but to get the versions we need--which will
be applicable to both singular integrals and fractional integrals--we
give their short proofs.  Throughout this section and
Sections~\ref{section:frac-int} and~\ref{section:nonconv} below, let
$\phi \in C_c^\infty$ be a fixed function supported in $B(0,1)$ with
$\int \phi\,dx=1$.

\begin{lemma} \label{lemma:kernel-est}
Fix $N \geq 0$ and $0\leq \alpha <n$.  Let $K$ be a distribution such that
$|\hat{K}(\xi)|\lesssim |\xi|^{-\alpha}$.  Suppose further that away
from the origin $K$ agrees with a function in $C^{N+1}$, and for all multi-indices
$\beta$ such that $|\beta|\leq N+1$,
\[ |\partial^\beta K(x)| \leq \frac{B_0}{|x|^{n-\alpha+|\beta|}}. \]
  Define $K^t = \phi_t* K$.  Then $K^t$ is a smooth function that
  satisfies
\[ |\partial^\beta K^t(x)| \leq \frac{B_1}{|x|^{n-\alpha+|\beta|}}. \]
  The constant $B_1$ is independent of $t$.
\end{lemma}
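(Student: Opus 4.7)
The plan is to verify the pointwise derivative bound on $K^t$ by splitting into two regimes, $|x|\geq 2t$ and $|x|<2t$, each handled with a different tool. Smoothness of $K^t$ is immediate: convolution of a tempered distribution with a $C_c^\infty$ function produces a $C^\infty$ function, so there is nothing more to say on that point.

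In the regime $|x|\geq 2t$, I would exploit the support condition $\supp(\phi_t)\subset B(0,t)$. For every $y$ in the support of $\phi_t$, $|x-y|\geq |x|-t\geq |x|/2 > 0$, so on the region of integration $K$ coincides with the given $C^{N+1}$ function. Consequently, for $|\beta|\leq N+1$ one may move the derivatives onto $K$:
\[
\partial^\beta K^t(x) = \int_{\R^n}\phi_t(y)\,\partial^\beta K(x-y)\,dy.
\]
Plugging in the hypothesis $|\partial^\beta K(z)|\leq B_0 |z|^{-(n-\alpha+|\beta|)}$, using $|x-y|\geq |x|/2$ to extract the weight, and using $\|\phi_t\|_1=\|\phi\|_1$ (which is independent of $t$) yields $|\partial^\beta K^t(x)|\lesssim B_0\,|x|^{-(n-\alpha+|\beta|)}$.

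In the regime $|x|<2t$, since $1/t\leq 2/|x|$ it suffices to prove the uniform pointwise bound $\|\partial^\beta K^t\|_\infty\lesssim t^{-(n-\alpha+|\beta|)}$. For this I would turn to Fourier analysis. Since $\widehat{\partial^\beta K^t}(\xi)$ is (up to a harmless constant) $\xi^\beta\hat\phi(t\xi)\hat K(\xi)$, Fourier inversion combined with the hypothesis $|\hat K(\xi)|\lesssim |\xi|^{-\alpha}$ gives
\[
\|\partial^\beta K^t\|_\infty\leq \bigl\|\widehat{\partial^\beta K^t}\bigr\|_1\lesssim \int_{\R^n}|\xi|^{|\beta|-\alpha}|\hat\phi(t\xi)|\,d\xi.
\]
The substitution $\eta=t\xi$ converts the right-hand side to $t^{-(n-\alpha+|\beta|)}\int_{\R^n}|\eta|^{|\beta|-\alpha}|\hat\phi(\eta)|\,d\eta$, and the remaining integral is a finite constant depending only on $n,\alpha,\beta,\phi$.

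The main (mild) obstacle is convergence of the $\eta$-integral near the origin, where $|\eta|^{|\beta|-\alpha}$ is singular. This is integrable precisely because $\alpha<n\leq n+|\beta|$, so $|\beta|-\alpha>-n$; away from the origin the Schwartz decay of $\hat\phi$ dominates any polynomial growth. Combining the two regimes delivers the stated estimate with a constant $B_1=B_1(B_0,n,\alpha,N,\phi)$ that is independent of $t$.
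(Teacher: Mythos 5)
Your proof is correct and follows essentially the same path as the paper's: split at $|x|\approx 2t$, differentiate under the integral using the pointwise kernel bounds and the support of $\phi_t$ in the far regime, and use Fourier inversion together with $|\hat K(\xi)|\lesssim|\xi|^{-\alpha}$ and the rescaling $\eta=t\xi$ in the near regime. Your justification of the convergence of $\int|\eta|^{|\beta|-\alpha}|\hat\phi(\eta)|\,d\eta$ is slightly more explicit than the paper's, but the argument is the same.
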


\begin{proof}
  Fix $t>0$.  Then we have that
  \[ K^t(x)  = \int_{B(0,t)} \phi_t(y) K(x-y)\,dy.  \]
  Suppose first that $|x|>2t$.  Then on $B(0,t)$ both functions
  and their derivatives are continuous and bounded, and so we can take the derivative inside the integral
  to get
  \begin{multline*}
    |\partial^\beta K^t(x)|
    \leq
    \int_{B(0,t)} t^{-n} |\phi(y/t)|
    |\partial^\beta K(x-y)|\,dy \\
    \lesssim
    t^{-n} \int_{B(0,t)} |x-y|^{-n+\alpha-|\beta|}\,dy
    \lesssim |x|^{-n+\alpha-|\beta|}. 
  \end{multline*}
  
  If $|x|\leq 2t$, then by the inverse Fourier transform
  \[ \partial^\beta K^t(x)
    \approx
    \int_{\R^n} e^{-2\pi i x\cdot \xi}
    \xi^\beta \hat{K}(\xi) \hat{\phi}(t\xi)\,d\xi, \]
  and so
  \begin{multline*}
    |\partial^\beta K^t(x)|
    \lesssim
    t^{\alpha-|\beta|} \int_{\R^n} |t\xi|^{-\alpha+|\beta|}
    |\hat{\phi}(t\xi)|\,d\xi\\
    \lesssim 
    t^{-n +\alpha-|\beta|} \int_{\R^n}
    |u|^{-\alpha+|\beta|}|\hat{\phi}(u)|\,du
    \lesssim
    |x|^{-n +\alpha-|\beta|}. 
  \end{multline*}
  The final integral converges since $\alpha<n$ and since $\hat{\phi}$
  is a Schwartz function.  
\end{proof}

\begin{lemma} \label{lemma:sio-tail}
Let $N$, $\alpha$ and $K$ be as in Lemma~\ref{lemma:kernel-est} and
define the operator $T$ by $Tf=K*f$.   Let $a$ be any $(N,\infty)$ atom with
$\supp(a)\subset Q$.  Then for all $x\in (Q^*)^c$,
\begin{equation}
\label{eq.5001}
M_\phi (Ta)(x) \lesssim
  M_{\alpha_\tau}(\chi_Q)(x)^{\tau},
\end{equation}
where $\tau=\frac{n+N+1}{n}$ and $\alpha_\tau=\alpha/\tau$.  
\end{lemma}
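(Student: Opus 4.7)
The plan is to combine the cancellation afforded by the atom's vanishing moments with the kernel estimates of Lemma~\ref{lemma:kernel-est}, so as to obtain a uniform-in-$t$ pointwise bound on $K^t\ast a(x)$, and then match the resulting tail decay with a suitable test cube in the definition of $M_{\alpha_\tau}(\chi_Q)$.

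First, by associativity $\phi_t\ast Ta=(\phi_t\ast K)\ast a=K^t\ast a$, so
$M_\phi(Ta)(x)=\sup_{t>0}|K^t\ast a(x)|$. Fix $t>0$ and let $y_Q$ be the center of $Q$. Since $a$ has vanishing moments up to order $N$, I would subtract the degree-$N$ Taylor polynomial $P_{y_Q}(y)$ of the smooth function $y\mapsto K^t(x-y)$ about $y_Q$, writing
\[
K^t\ast a(x)=\int_{Q}\bigl(K^t(x-y)-P_{y_Q}(y)\bigr)\,a(y)\,dy.
\]

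Second, by Taylor's theorem with remainder the integrand is controlled by $C\ell(Q)^{N+1}\sup_{|\beta|=N+1}|\partial^\beta K^t(x-\xi)|$ for some $\xi$ on the segment from $y_Q$ to $y$. Since $x\notin Q^*=2\sqrt{n}\,Q$ and $y\in Q$, one has $|y-y_Q|\leq\tfrac{\sqrt n}{2}\ell(Q)\leq\tfrac12|x-y_Q|$ and hence $|x-\xi|\geq\tfrac12|x-y_Q|$. Lemma~\ref{lemma:kernel-est} then yields, uniformly in $t$,
\[
|\partial^\beta K^t(x-\xi)|\lesssim |x-y_Q|^{\alpha-n-N-1}.
\]
Using $\|a\|_\infty\leq 1$ and $\supp(a)\subset Q$, and then taking $\sup_{t>0}$, produces
\[
M_\phi(Ta)(x)\lesssim |Q|\,\ell(Q)^{N+1}|x-y_Q|^{\alpha-n-N-1}=|Q|^{\tau}|x-y_Q|^{\alpha-n\tau}.
\]

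Finally, I would produce a matching lower bound for $M_{\alpha_\tau}(\chi_Q)(x)$ by testing against any axis-parallel cube $Q'\ni x$ that also contains $Q$ with $\ell(Q')\approx|x-y_Q|$ (such a $Q'$ exists because $x\in(Q^*)^c$ makes $|x-y_Q|$ comparable to $\mathrm{dist}(x,Q)+\ell(Q)$). Since $|Q\cap Q'|=|Q|$,
\[
M_{\alpha_\tau}(\chi_Q)(x)\geq |Q'|^{\alpha_\tau/n}\avgint_{Q'}\chi_Q\,dy\gtrsim |Q|\,|x-y_Q|^{\alpha_\tau-n}.
\]
Raising to the $\tau$-th power and using $\alpha_\tau\tau=\alpha$ reproduces $|Q|^\tau|x-y_Q|^{\alpha-n\tau}$, which combined with the previous display gives \eqref{eq.5001}. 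There is no substantive analytic obstacle; the point to track carefully is the bookkeeping of exponents, which balances precisely because of the choices $\tau=(n+N+1)/n$ and $\alpha_\tau=\alpha/\tau$.
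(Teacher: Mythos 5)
Your proof is correct and follows essentially the same route as the paper's: reduce to $K^t\ast a$, subtract the degree-$N$ Taylor polynomial of $y\mapsto K^t(x-y)$ at the center of $Q$ (which integrates to zero against $a$ by the moment conditions), bound the remainder using the uniform-in-$t$ derivative estimates of Lemma~\ref{lemma:kernel-est}, and finally match the resulting decay $\ell(Q)^{n+N+1}|x-c_Q|^{\alpha-n-N-1}$ against $M_{\alpha_\tau}(\chi_Q)(x)^\tau$ by testing with a cube of side length comparable to $|x-c_Q|$. The only cosmetic difference is the use of the Lagrange rather than the integral form of the Taylor remainder, which is immaterial.
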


\begin{proof}
  Fix $x\in (Q^*)^c$ and $t>0$.  Then it will suffice
  to show that
  \[ |\phi_t*Ta(x)|
    \lesssim
   M_{\alpha_\tau}(\chi_Q)(x)^{\tau}, \]
  with a constant independent of $x$ and $t$.  Define $K^t=\phi_t*K$
  as before.
  Since
  \[ Ta(x) = \int_Q K(x-y)a(y)\,dy \]
  and this integral converges absolutely, by taking the Fourier
  transform we see that $\phi_t*Ta(x)=K^t*a(x)$.    Let $P_N$ be the
  Taylor polynomial of degree $N$ of the function $y\rightarrow K^t(x-y)$ centered
  at $c_Q$.  Then
  \[ P_N(y) = \sum_{|\beta|\leq N} \frac{\partial^\beta
      K^t(x-c_Q)}{\beta!}(y-c_Q)^\beta, \]
so  by the moment condition on $a$,
  \[ \int_Q P_N(y)a(y)\,dy = 0. \]
Moreover, the remainder $R_N(y)$ satisfies
\[ R_N(y)=  K^t(x-y)-P_N(y)
  = \sum_{|\beta|=N+1} R_\beta(y) (y-c_Q)^\beta, \]
where
\[ R_\beta (y) = \int_0^1 (1-s)^N \partial^\beta
  K^t(x-c_Q-s(y-c_Q))\,ds.  \]
Therefore, by Lemma~\ref{lemma:kernel-est}
we have that
  \begin{multline*}
    |K^t*a(x)|
    \leq
    \int_Q |K^t(x-y)-P_N(y)||a(y)|\,dy \\
    \lesssim
    \frac{1}{|x-c_Q|^{n-\alpha+N+1}}\int_Q |y-c_Q|^{N+1} \,dy 
    \lesssim
 \frac{\ell(Q)^{n+N+1}}{|x-c_Q|^{n-\alpha+N+1}}.
\end{multline*}

  To complete the proof, note that for $x\in (Q^*)^c$, if $P$ is the
  smallest cube containing $x$ and $Q$, then 
  \[ M_{\alpha_\tau}(\chi_Q)(x)^\tau
    \approx
    \bigg[|P|^{\frac{\alpha_\tau}{n}}\avgint_P \chi_Q\,dy\bigg]^\tau
    \approx
   \bigg[ \frac{\ell(Q)^n}{|x-c_Q|^{n-\alpha_\tau}}\bigg]^\tau =
   \frac{\ell(Q)^{n+N+1}}{|x-c_Q|^{n-\alpha+N+1}}.  \]
If we combine these estimates we get the desired inequality.
\end{proof}

\begin{proof}[Proof of Theorem~\ref{thm:sio-weight}]
  By the finite atomic decomposition,
  Proposition~\ref{prop:finite-atomic-wts}, it will suffice to fix a
  finite sum of $(N,\infty)$ atoms,
\[ f = \sum_{i=1}^M \lambda_i a_i, \]
with $\supp(a_i) \subset Q_i$ and $c_i=c_{Q_i}$, and prove that
\[ \|M_\phi Tf\|_{L^p(w)}
  \lesssim
  \bigg\|\sum_{i=1}^M \lambda_i \chi_{Q_i}\bigg\|_{L^p(w)}. \]
By the linearity of $T$ and the sublinearity of $M_\phi$,
\[ \|M_\phi Tf\|_{L^p(w)}
  \leq
  \bigg\|\sum_{i=1}^M \lambda_i M_\phi T(a_i)\chi_{Q_i^*}\bigg\|_{L^p(w)}
  +
  \bigg\|\sum_{i=1}^M \lambda_i M_\phi T(a_i)\chi_{(Q_i^*)^c}\bigg\|_{L^p(w)}
  = I_1 + I_2. \]

To estimate $I_1$, we apply Lemma~\ref{lemma:GK-q}.  Since $w\in
A_\infty$, $w\in RH_s$ for some $s>1$.  Fix $q>\max(p,1)$ such that
$(\frac{q}{p})'\leq s$.  Then $w\in RH_{(\frac{q}{p})'}$, and so by
Lemma~\ref{lemma:GK-q} and the fact that $T$ and $M_\phi$ are bounded
on $L^q$,
\begin{multline*}
  I_1
 \lesssim
    \bigg\|\sum_{i=1}^M \lambda_i
    \bigg(\avgint_{Q_i} M_\phi
    T(a_i)^q\,dx\bigg)^{\frac{1}{q}}\chi_{Q_i^*}\bigg\|_{L^p(w)} \\
 \lesssim
    \bigg\|\sum_{i=1}^M \lambda_i
    \bigg(\avgint_{Q_i} 
    |a_i|^q\,dx\bigg)^{\frac{1}{q}}\chi_{Q_i^*}\bigg\|_{L^p(w)} 
 \lesssim 
    \bigg\|\sum_{i=1}^M \lambda_i \chi_{Q_i^*}\bigg\|_{L^p(w)} 
 \lesssim 
    \bigg\|\sum_{i=1}^M \lambda_i \chi_{Q_i}\bigg\|_{L^p(w)};
  \end{multline*}
the last inequality follows by Lemma~\ref{lemma:fef-stein-wts} and
Remark~\ref{remark:vv-note}. 

\medskip

To estimate $I_2$, first note that by our assumption on $N$,
\[ N+1 > n\left(\frac{r_w}{p}-1\right), \]
or equivalently,
\[ p\tau = p\left(\frac{n+N+1}{n}\right) > r_w.  \]
Therefore, $w\in A_{p\tau}$, and so by Lemma~\ref{lemma:sio-tail} and
Lemma~\ref{lemma:fef-stein-wts},
\begin{multline*}
  I_2
 \lesssim
    \bigg\| \sum_{i=1}^M \lambda_i M(\chi_{Q_i})^{\tau}\bigg\|_{L^p(w)} 
 =
    \bigg\| \bigg(\sum_{i=1}^M \lambda_i
    M(\chi_{Q_i})^{\tau}\bigg)^\frac{1}{\tau}\bigg\|_{L^{p\tau}(w)}^\tau \\
 \lesssim
    \bigg\| \bigg(\sum_{i=1}^M \lambda_i
    \chi_{Q_i}\bigg)^\frac{1}{\tau}\bigg\|_{L^{p\tau}(w)}^\tau 
 = \bigg\|\sum_{i=1}^M \lambda_i \chi_{Q_i}\bigg\|_{L^{p\tau}(w)}. 
\end{multline*}
This completes the proof.
\end{proof}

\begin{proof}[Proof of Theorem~\ref{thm:sio-var}]
  The proof of this result is nearly identical to the above proof.  By
  Proposition~\ref{prop:finite-atomic-var} we may again consider
 finite sums of atoms.  We decompose as before into $I_1$ and $I_2$.  To
  estimate $I_1$ we fix $q>\max(p_+,1)$ and apply
  Lemma~\ref{cor:var-GK}.  We then use
  Lemma~\ref{prop:fef-stein-var} and argue as in Remark~\ref{remark:vv-note}.   To estimate $I_2$, we note that
  $p_-\tau>1$, and so we can again apply Lemma~\ref{prop:fef-stein-var}.
\end{proof}

\section{Fractional integral operators}
\label{section:frac-int}

In this section we prove Theorems~\ref{thm:fracint:wtd}
and~\ref{thm:fracint-var}.   The proof of
Theorem~\ref{thm:fracint:wtd} is very similar to the proof of
Theorem~\ref{thm:sio-weight} and so we will omit those details that
are the same and concentrate on the differences.  And again, the proof
of Theorem~\ref{thm:fracint-var} is a straightforward variation of the
the proof of Theorem~\ref{thm:fracint:wtd}.

\begin{proof}[Proof of Theorem~\ref{thm:fracint:wtd}]
  We need to show that if $f$ is a finite sum of $(N,\infty)$ atoms,
  \[ f = \sum_{i=1}^M \lambda_i a_i, \]
  where the exact value of $N$ will be chosen below, then
  \[ \|M_\phi I_\alpha f \|_{L^q(w^q)}
    \lesssim 
    \bigg\| \sum_{i=1}^M \lambda_i \chi_{Q_i}\bigg\|_{L^p(w^p)}.  \]
  As before we dominate the left-hand side by the sume of two terms:
  \[ \bigg\| \sum_{i=1}^M \lambda_i M_\phi I_\alpha
    (a_i)\chi_{Q_i^*}\bigg\|_{L^q(w^q)}
    +
     \bigg\| \sum_{i=1}^M \lambda_i M_\phi I_\alpha
     (a_i)\chi_{(Q_i^*)^c}\bigg\|_{L^q(w^q)}
     =
     J_1 + J_2.  \]

   To estimate $J_1$, fix $q_0> \max\big(q, \frac{n}{n-\alpha}\big)$
   and define $p_0>1$ by
   $\frac{1}{p_0}-\frac{1}{q_0}=\frac{\alpha}{n}$.  Since $w^p \in
   RH_{\frac{q}{p}}$, $w^q\in A_\infty$, so arguing as before we may assume
   that $q_0$ is such that $w^q \in RH_{(\frac{q_0}{p})'}$.  Then by
   Lemma~\ref{lemma:GK-q}, and since $M_\phi$ is bounded on $L^{q_0}$ and
   $I_\alpha : L^{p_0}\rightarrow L^{q_0}$,
   \begin{align*}
     J_1
     & \lesssim
       \bigg\| \sum_{i=1}^M \lambda_i \bigg( \avgint_{Q_i} M_\phi I_\alpha
    (a_i)^{q_0}\,dx\bigg)^{\frac{1}{q_0}}\chi_{Q_i^*}\bigg\|_{L^q(w^q)}
     \\
     & \lesssim
       \bigg\| \sum_{i=1}^M \lambda_i |Q_i|^{\frac{\alpha}{n}}\bigg( \avgint_{Q_i} 
    |a_i|^{p_0}\,dx\bigg)^{\frac{1}{p_0}}\chi_{Q_i^*}\bigg\|_{L^q(w^q)}
     \\
     & \lesssim
       \bigg\| \sum_{i=1}^M \lambda_i |Q_i^*|^{\frac{\alpha}{n}}
       \chi_{Q_i^*}\bigg\|_{L^q(w^q)}; \\
     \intertext{by Lemma~\ref{lem:frac}, }
   & \lesssim
       \bigg\| \sum_{i=1}^M \lambda_i 
     \chi_{Q_i^*}\bigg\|_{L^p(w^p)}; \\
     \intertext{since $w^p\in A_\infty$, by Lemma~\ref{lemma:fef-stein-wts}}
    & \lesssim
       \bigg\| \sum_{i=1}^M \lambda_i 
     \chi_{Q_i}\bigg\|_{L^p(w^p)}. \\ 
   \end{align*}

   \medskip
   
   To estimate $J_2$ we will apply Lemma~\ref{lemma:sio-tail}, but first we
   need to fix $N$.  For $I_\alpha$, our kernel is
   $K(x)=|x|^{\alpha-n}$, and so the desired estimates on the
   derivative of $K$ hold for all $N>0$.   We now fix $N$ as follows:
   since $w^q \in A_\infty$, choose $N$ so that
   \[ \bigg(\frac{n-\alpha+N+1}{n}\bigg)q >r_{w^q}.  \]
   As before, let $\tau = \frac{n+N+1}{n}$.  Then, since $\frac{1}{\tau p}-\frac{1}{\tau q} = \frac{\alpha}{\tau n}$, we have that
   \[ 1+ \frac{\tau q}{(\tau p)'} = \tau q \bigg(1- \frac{\alpha}{\tau
       n}\bigg)
     =  \bigg(\frac{n-\alpha+N+1}{n}\bigg)q.  \]
Hence, if we let $v=w^{\frac{1}{\tau}}$, we have that
   $v^{\tau  q} = w^q \in A_{1+ \frac{\tau q}{(\tau p)'}}$.  Equivalently, we
     have that $v\in A_{\tau p, \tau q}$.   Therefore, by
     Lemma~\ref{lemma:sio-tail} and by Lemma~\ref{prop:frac-fef-stein-wts}
     applied to the fractional maximal
     operator $M_{\alpha_\tau}$,
     \begin{multline*}
       J_2
       \lesssim
       \bigg\|\bigg(\sum_{i=1}^M \lambda_i
       M_{\alpha_\tau}(\chi_{Q_i})^\tau\bigg)^{\frac{1}{\tau}}
       \bigg\|_{L^{q\tau}(v^{\tau q})}^\tau \\
       \lesssim
       \bigg\|\bigg(\sum_{i=1}^M \lambda_i
    \chi_{Q_i}\bigg)^{\frac{1}{\tau}}
    \bigg\|_{L^{p\tau}(v^{\tau p})}^\tau
    =
    \bigg\|\sum_{i=1}^M \lambda_i
    \chi_{Q_i}
    \bigg\|_{L^{p}(w^{ p})}.
  \end{multline*}
  This completes the proof.
\end{proof}

\begin{proof}[Proof of Theorem~\ref{thm:fracint-var}]
The proof is essentially the same as the proof of
Theorem~\ref{thm:fracint:wtd}.  To estimate $J_1$ we use
Lemma~\ref{cor:var-GK} with $q_0>\max\big(q_+,
\frac{n}{n-\alpha}\big)$, and Lemma~\ref{lemma:frac-var}.  To
estimate $J_2$ we choose $N$ so large that $p_-\tau >1$ so that we can
apply Lemma~\ref{prop:frac-fef-stein-var} to $M_{\alpha_\tau}$ acting from
$L^{\tau\pp}$ to $L^{\tau\qq}$.
\end{proof}

\section{Non-convolution operators}
\label{section:nonconv}

The proofs of Theorems~\ref{thm:wtd-nonconv} and~\ref{thm:var-nonconv}
for non-convolution Calder\'on-Zygmund operators are essentially
identical to the proofs for convolution type singular integrals in
Theorems~\ref{thm:sio-weight} and~\ref{thm:sio-var}.  Fix such an operator
$T$.  Since it satisfies the standard kernel estimates~\eqref{eq.8001}
and~\eqref{eq.8002}, it is bounded on $L^q$, $1<q<\infty$.  Therefore,
the estimate of the local piece is identical.  To prove the estimate
for the global piece, we need a maximal operator estimate for the
action of $T$ on atoms, which is the substance of
Lemma~\ref{lemma:nonconv-max} below.  The rest of the proof is
identical.

\begin{remark}
  Our results can be generalized to a larger class
of operators.  An examination of the proof shows that to apply
Lemma~\ref{lemma:GK-q} and estimate the local piece, we need to assume
that the operator $T$ is such that there exists $q>\max(p,1)$ such
that $w\in RH_{(\frac{q}{p})'}$ (in the weighted case) and $T$ satisfies
$\|Ta\|_q \lesssim |Q|^{\frac{1}{q}}$ (in either the weighted or
variable exponent case).  To estimate the global
piece we again need that $T$ has a kernel $K$ representation such that 
\eqref{eq.8003} holds for all $|\beta|\le L+1$ and $T$ satisfies
\eqref{eq.8004}  for all $|\beta|\le L$, where $L$ is defined as
before in Theorem~\ref{thm:wtd-nonconv} (for a weighted estimate) or
Theorem~\ref{thm:var-nonconv} (for the variable exponent case).  In
particular, we do not assume that the operator is bounded on $L^2$ or
that it satisfies the standard kernel estimates~\eqref{eq.8001}
and~\eqref{eq.8002}.   We leave the details to the interested reader.
\end{remark}

\begin{lemma} \label{lemma:nonconv-max}
Given $L \geq -1$, suppose  $T$ is a Calder\'on-Zygmund operator associated with a kernel
$K$ that satisfies \eqref{eq.8003} for all $|\beta|=L+1$, and suppose
$T$ satisfies \eqref{eq.8004} for all $(L+1,\infty)$ atoms and
$|\beta|\leq L$.  (If $L=-1$ we disregard this condition.)  Then given
any $(L+1,\infty)$ atom $a$, $\supp(a)\subset Q$, then for all $x\in
(Q^{**})^c$, and $\phi$ as defined in Section~\ref{section:sio},
\begin{equation} \label{eqn:nonconv-max1}
  M_\phi(Ta)(x) \lesssim M(\chi_Q)(x)^{\frac{n+L+1}{n}}. 
\end{equation}
\end{lemma}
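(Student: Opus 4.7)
The plan is to bound $|\phi_t*Ta(x)|\lesssim (\ell(Q)/R)^{n+L+1}$ uniformly in $t>0$, where $R:=|x-c_Q|$; for $x\in (Q^{**})^c$ one has $R\gtrsim \ell(Q)$ and $\operatorname{dist}(x,Q)\gtrsim R$, and $M(\chi_Q)(x)^{(n+L+1)/n}\approx (\ell(Q)/R)^{n+L+1}$, so taking the supremum in $t$ yields the lemma. The analysis splits into two regimes depending on the size of $t$ relative to $R$, because the convolution approach used in the proof of Lemma~\ref{lemma:sio-tail} does not survive the lack of translation invariance.

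\textbf{Small $t$} ($t\le cR$ for a suitable geometric constant $c$). Then $B(x,t)\cap Q=\emptyset$ and Fubini gives
\[
\phi_t*Ta(x)=\int_Q a(y)\,\tilde K(y)\,dy,\qquad \tilde K(y):=\int \phi_t(x-z)\,K(z,y)\,dz,
\]
with $\tilde K$ smooth on $Q$. Since $a$ is an $(L+1,\infty)$ atom, its moments through order $L+1$ vanish, and we may subtract the degree-$(L+1)$ Taylor polynomial $P_{L+1}$ of $\tilde K$ at $c_Q$. Rewriting the remainder in the ``difference'' form
\[
\tilde K(y)-P_{L+1}(y)=(L+1)\sum_{|\beta|=L+1}\frac{(y-c_Q)^\beta}{\beta!}\int_0^1(1-s)^L\bigl[\partial^\beta\tilde K(c_Q+s(y-c_Q))-\partial^\beta\tilde K(c_Q)\bigr]\,ds,
\]
I differentiate under the integral sign in $\tilde K$ and invoke \eqref{eq.8003} with $|\beta|=L+1$ pointwise in $z\in B(x,t)$ (the hypothesis $|h|\le |z-y|/2$ is satisfied since $|z-y|\approx R$ while $|h|\lesssim\ell(Q)\ll R$). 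This yields $|\partial^\beta\tilde K(c_Q+s(y-c_Q))-\partial^\beta\tilde K(c_Q)|\lesssim \ell(Q)^\delta/R^{n+L+1+\delta}$, hence $|\tilde K-P_{L+1}|\lesssim \ell(Q)^{L+1+\delta}/R^{n+L+1+\delta}$ on $Q$, and integration against $a$ produces the target bound.

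\textbf{Large $t$} ($t>cR$). Now $\phi_t(x-\cdot)$ may overlap $Q$, so I use the moment condition \eqref{eq.8004} on $Ta$. Let $Q_L(z)$ denote the degree-$L$ Taylor polynomial in $z$ of $\phi_t(x-\cdot)$ at $c_Q$; then by \eqref{eq.8004},
\[
\phi_t*Ta(x)=\int\bigl[\phi_t(x-z)-Q_L(z)\bigr]\,Ta(z)\,dz.
\]
Decompose the $z$-integral into $2Q$ and $(2Q)^c$. On $2Q$ combine the Taylor remainder bound $|\phi_t(x-z)-Q_L(z)|\lesssim\ell(Q)^{L+1}/t^{n+L+1}$ with $\int_{2Q}|Ta|\,dz\lesssim|Q|$ (from Cauchy--Schwarz and the $L^2$-boundedness of $T$) to obtain $\lesssim\ell(Q)^{n+L+1}/t^{n+L+1}\lesssim(\ell(Q)/R)^{n+L+1}$. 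On $(2Q)^c$ first derive the decay $|Ta(z)|\lesssim\ell(Q)^{n+L+1+\delta}/|z-c_Q|^{n+L+1+\delta}$ by Taylor-expanding $K(z,\cdot)$ at $c_Q$, using the vanishing moments of $a$ through order $L+1$ and invoking \eqref{eq.8003}. Then subdivide by the size of $|z-c_Q|$ relative to $t$: use $|\phi_t(x-z)-Q_L(z)|\lesssim|z-c_Q|^{L+1}/t^{n+L+1}$ when $|z-c_Q|\lesssim t$, and $|\phi_t(x-z)-Q_L(z)|=|Q_L(z)|\lesssim|z-c_Q|^L/t^{n+L}$ when $|z-c_Q|\gg t$ (where $\phi_t(x-z)=0$); a routine integration in each subregion again produces $(\ell(Q)/R)^{n+L+1}$.

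The main obstacle is the bookkeeping on $(2Q)^c$ in the large-$t$ case: the polynomial $Q_L$ does not vanish at infinity and must be paired against the decay of $Ta$ across the threshold $|z-c_Q|\sim t$. The strong decay of $Ta$, which requires the extra moment of $a$ at order $L+1$ and the full strength of \eqref{eq.8003}, is precisely what makes each subintegral converge and assemble to the sharp exponent $n+L+1$.
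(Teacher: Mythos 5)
Your proof is correct and follows essentially the same strategy as the paper's: split on the size of $t$ relative to $R=|x-c_Q|$, use Taylor expansion in the second variable of $K$ together with the vanishing moments of $a$ and the smoothness hypothesis \eqref{eq.8003}, and use the moment condition \eqref{eq.8004} on $Ta$ together with Taylor expansion of $\phi_t(x-\cdot)$ in the large-$t$ regime. The only noteworthy divergence is in the small-$t$ case: the paper first establishes the pointwise decay $|Ta(y)|\lesssim \ell(Q)^{n+L+1+\delta}/|y-c_Q|^{n+L+1+\delta}$ on $(Q^*)^c$ by Taylor-expanding $K(y,\cdot)$ directly, and then convolves; you instead apply Fubini to mollify the kernel in its first variable, forming $\tilde K=\int\phi_t(x-z)K(z,\cdot)\,dz$, and Taylor-expand $\tilde K$ — an equivalent route that buys a self-contained estimate but then requires you to re-derive the decay of $Ta$ separately for the large-$t$ case, which the paper gets to reuse. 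Your further subdivision of $(2Q)^c$ by the threshold $|z-c_Q|\sim t$ in the large-$t$ case is also more work than necessary: the paper simply bounds the derivatives of $\phi$ uniformly, trades $t^{-n-L-1}$ for $R^{-n-L-1}$, and integrates $|y-c_Q|^{L+1}|Ta(y)|$ globally, with the extra $\delta$ in the decay of $Ta$ guaranteeing convergence. Both versions assemble to the same bound.
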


\begin{proof}
Fix $x\in (Q^{**})^c$; then to prove~\eqref{eqn:nonconv-max1} it
will suffice to prove that for all $t>0$, 
\begin{equation}
\label{eq.8008}
\big|
\phi_t*Ta(x)
\big|
\lesssim
\frac{\ell(Q)^{n+L+1}}{|x-c|^{n+L+1}},
\end{equation}
where the implicit constant is independent of $t$, $x$ and $Q$.
We will consider two cases: $0<t\le \frac{|x-c|}{2}$ and $t>\frac{|x-c|}{2}$.

First, however, we will estimate the decay of $Ta(y)$ when  $y\in
(Q^*)^c$.  Let $N=L+1$ and let $c$ be the center of $Q$.  By our assumption on the atom $a$ we have
that $\int z^\beta a(z)dz=0$ for all $|\beta|\le N$.  Hence, we can
apply Taylor's theorem with integral remainder to get
\begin{align}
|Ta(y)| = &
\bigg|
\int_Q K(y,z)a(z)\,dz
\bigg|\notag \\
=&
\bigg|
\int_Q \bigg[
K(y,z)
-\sum_{|\beta|\le N-1}\frac{\partial_z^\beta K(y,c)}{\beta!}(z-c)^\beta
\bigg]a(z)\,dz
\bigg|\notag \\
=&
\bigg|
\sum_{|\beta|= N}
\frac{|\beta|}{\beta!}
\int_Q \bigg(
\int_0^1(1-\theta)^{N-1}
\partial_z^\beta K(y,\zeta_{z,\theta})
d\theta
\bigg)
(z-c)^\beta a(z)\,dz
\bigg|,\notag \\
\intertext{where $\zeta_{z,\theta} = c+\theta(z-c)$.  We again apply
the vanishing moment condition of $a$ and  \eqref{eq.8003} with $|\beta|=N$ to get}
=&
\bigg|
\sum_{|\beta|= N}
\frac{|\beta|}{\beta!}
\int_0^1(1-\theta)^{N-1}
\int
\bigg(
\partial_z^\beta K(y,\zeta_{z,\theta})
-
\partial_z^\beta K(y,c)
\bigg)
(z-c)^\beta a(z)\,dz
d\theta
\bigg|\notag \\
\lesssim&
\int
\frac{|z-c|^{\delta}}{|y-c|^{n+N+\delta}}
|z-c|^N |a(z)|\,dz \notag \\
\lesssim &
\frac{\ell(Q)^{n+N+\delta}}{|y-c|^{n+N+\delta}}. \label{eq.8007}
\end{align}

\medskip

We now consider the two cases given above.  First, suppose $0<t\le
\frac{|x-c|}{2}$.  Then we  have that if $|x-y|\leq t$, 
\[ |y-c|\ge |x-c|-|x-y|\ge |x-c|-t\ge \frac12|x-c|,  \]
which, since $x\in (Q^{**})^c$, implies that  $y\in
(Q^*)^c$. Therefore, by \eqref{eq.8007} we have that
\begin{equation*}
\label{eq.8009}
|\phi_t*Ta(x)| 
\le
\int_{|x-y|\le t}
|\phi_t(x-y)|
|Ta(y)|dy
\lesssim
\frac{\ell(Q)^{n+N+\delta}}{|x-c|^{n+N+\delta}} 
\leq
\frac{\ell(Q)^{n+L+1}}{|x-c|^{n+L+1}}.
\end{equation*}
The last inequality holds since $\delta>0$ and $\ell(Q)\leq |x-c|$.
This gives us \eqref{eq.8008} in the first case.

\medskip

Now suppose that $t>\frac{|x-c|}{2}$.  Then by the moment condition
for $Ta$ \eqref{eq.8004}, and again by Taylor's theorem, we have that
\begin{align*}
|\phi_t*Ta(x)| =& \bigg|\int_Q
                  t^{-n}\phi\big(t^{-1}(x-y)\big)Ta(y)\,dy \bigg|\\
=&\bigg|\int_{\R^n}  t^{-n}
\bigg[
\phi\big(t^{-1}(x-y)\big)
-\sum_{|\beta|\le L}\frac{\partial^\beta 
\phi(\frac{x-c}{t})}{\beta!}\bigg(\frac{c-y}{t}\bigg)^\beta
\bigg]
Ta(y)\,dy \bigg|\\
=&
\bigg|\int_{\R^n} 
\sum_{|\beta|= N}
\frac{|\beta|}{t^{n}\beta!}
\bigg(
\int_0^1
(1-\theta)^{L}
\partial^\beta \phi\big(\frac{\zeta_{x,y,\theta}}t\big)
\,d\theta\bigg)
\bigg(\frac{c-y}{t}\bigg)^\beta
   Ta(y)\,dy \bigg|, \\
\intertext{where $\zeta_{x,y,\theta} = x-c+\theta(c-y)$.  Continuing,
  we get}
\lesssim&
\sum_{|\beta|= N}t^{-n-N}
\int_{\mathbb{R}^n}
\sup_{0\le \theta\le 1}\big|
\partial^\beta \phi\big(\frac{\zeta_{x,y,\theta}}t\big)
\big|
|c-y|^{N}
|Ta(y)|\,dy\\
\lesssim&
|x-c|^{-n-N}
\int_{\mathbb{R}^n}
|y-c|^{N}|Ta(y)|\,dy.
\end{align*}

To estimate the final integral we split the domain.  
On $(Q^*)^c$ we use \eqref{eq.8007} to get
\[
\int_{(Q^*)^c}
|y-c|^{N}|Ta(y)|dy
\lesssim
\int_{(Q^*)^c}
\frac{\ell(Q)^{n+N+\delta}}{|y-c|^{n+\delta}}dy
\lesssim
\ell(Q)^{n+N}.
\]
On the other hand, to estimate the integral on $Q^*$ we use the fact
that  $T$ is bounded on $L^r$ for $r>1$.  Then
by H\"older's inequality we have that
\[
\int_{Q^*}
|y-c|^{N}|Ta(y)|\,dy
\lesssim
\ell(Q)^{n/r'+N}\|Ta\|_{L^r}
\lesssim
\ell(Q)^{n+N}.
\]
If we combine all of these estimates, we see that 
for $t>\frac{|x-c|}{2}$,
\begin{equation*}
\big|\phi_t*Ta(x)\big|
\lesssim
\frac{\ell(Q)^{n+L+1}}{|x-c|^{n+L+1}},
\end{equation*}
which give us \eqref{eq.8008} in this case.  This completes the proof.
\end{proof}

\section{Extensions to other Banach function spaces}
\label{section:BFS}

In this section we conclude by briefly considering the extension of our
approach to Hardy spaces defined with
respect to other quasi-Banach function spaces.  Our starting point is
the observation that we were able to prove 
results for the
variable Hardy spaces because we could prove vector-valued inequalities in the variable
Lebesgue spaces in Section~\ref{section:lemmas} via extrapolation from
the corresponding weighted norm inequalities.

Therefore, to extend our results to other scales of spaces we need a
theory of extrapolation.  
In~\cite{MR2797562},   motivated by the  extrapolation results to the scale of
variable Lebesgue spaces
in~\cite{cruz-uribe-fiorenza-martell-perez06}, the
authors considered the  general problem of extrapolating from weighted
norm inequalities into
quasi-Banach function spaces.    Their approach was the following:  given a
a quasi-Banach function space  $X$, define a scale of spaces $X^r$, $0<r<\infty$,
where  $f\in X^r$ if $|f|^r \in X$, and the ``norm'' on $X^r$ is given
by
\[ \|f\|_{X^r} = \||f|^r\|_X^{\frac{1}{r}}. \]
In order to use extrapolation to prove results in $X$, it is necessary
to assume 
that there exists $r>1$ such that $X^r$ is a Banach function space,
and that the Hardy-Littlewood maximal operator is bounded on the
associate space $(X^r)'$.  (See~\cite[Remark~4.7]{MR2797562}.)   Thus,
for example, in the scale of variable Lebesgue spaces, given $\pp \in \Pp_0$, we
would fix $0<p<p_-$ and use the fact that if $\pp\in LH$, the maximal
operator is bounded on the Banach function space $L^{(\pp/p)'}$.  

Given a quasi-Banach space $X$, we can define a Hardy space $H^X$ to
be the set of all distributions $f$ such that $\M_{N_0}f \in X$, with
quasi-norm $\|f\|_{H^X} = \|\M_{N_0}f \|_X$.  The question is then
whether we can prove that singular and fractional integrals are
bounded on the spaces $H^X$.  The proof would require two components.
First, we would need the theory of extrapolation to prove the various
vector-valued inequalities required.  Second, we would need the basics
Hardy space theory: in particular, the equivalence of the various
definitions of a Hardy space in terms of the radial and grand maximal
operators, and the finite atomic decomposition.

We could, for instance, apply these ideas to the Hardy-Orlicz spaces introduced by
Janson~\cite{MR596123}, and considered earlier in the case of analytic
functions on the unit disk by Le\'{s}niewicz~\cite{MR0215072}.  An
atomic decomposition for these spaces was given by
Viviani~\cite{MR996824}.  Another, more recent example of spaces that
would be amenable to our approach  are the
Musielak-Orlicz Hardy spaces.  These are based on the Musielak-Orlicz
spaces introduced in~\cite{musielak83}; see~\cite{MR3586020} for a 
comprehensive treatment, including an atomic decomposition.  (We note
in passing that extrapolation into the scale of Musielak-Orlicz spaces
was considered separately in~\cite{MR3811530}.)  Additional examples
Hardy spaces where our approach might be applicable are given in~\cite{MR3687096}.

\bibliographystyle{plain}

\bibliography{GK}

\end{document}